\providecommand{\tabularnewline}{\\}
\theoremstyle{plain}
\newtheorem{thm}{\protect\theoremname}[section]
\theoremstyle{definition}
\newtheorem{defn}[]{\protect\definitionname}[section]
\theoremstyle{plain}
\newtheorem{lem}[]{\protect\lemmaname}[section]
\theoremstyle{plain}
\newtheorem{prop}[]{\protect\propositionname}[section]
\theoremstyle{plain}
\newtheorem{cor}[]{\protect\corollaryname}[section]
\theoremstyle{plain}
\newtheorem{rem}[]{\protect\remarkname}[section]
\theoremstyle{plain}
\newtheorem{example}[]{\protect\examplename}[section]
\theoremstyle{plain}
\newtheorem{claim}[]{\protect\claimname}[section]
\DeclareMathOperator*{\esssup}{\mathrm{ess\,sup}}
\DeclareMathAlphabet{\mathbsf}{OT1}{cmss}{bx}{n}
\DeclareMathAlphabet{\mathssf}{OT1}{cmss}{m}{sl}
\DeclareSymbolFont{bsfletters}{OT1}{cmss}{bx}{n}
\DeclareSymbolFont{ssfletters}{OT1}{cmss}{m}{n}
\DeclareMathSymbol{\bsfGamma}{0}{bsfletters}{'000}
\DeclareMathSymbol{\ssfGamma}{0}{ssfletters}{'000}
\DeclareMathSymbol{\bsfDelta}{0}{bsfletters}{'001}
\DeclareMathSymbol{\ssfDelta}{0}{ssfletters}{'001}
\DeclareMathSymbol{\bsfTheta}{0}{bsfletters}{'002}
\DeclareMathSymbol{\ssfTheta}{0}{ssfletters}{'002}
\DeclareMathSymbol{\bsfLambda}{0}{bsfletters}{'003}
\DeclareMathSymbol{\ssfLambda}{0}{ssfletters}{'003}
\DeclareMathSymbol{\bsfXi}{0}{bsfletters}{'004}
\DeclareMathSymbol{\ssfXi}{0}{ssfletters}{'004}
\DeclareMathSymbol{\bsfPi}{0}{bsfletters}{'005}
\DeclareMathSymbol{\ssfPi}{0}{ssfletters}{'005}
\DeclareMathSymbol{\bsfSigma}{0}{bsfletters}{'006}
\DeclareMathSymbol{\ssfSigma}{0}{ssfletters}{'006}
\DeclareMathSymbol{\bsfUpsilon}{0}{bsfletters}{'007}
\DeclareMathSymbol{\ssfUpsilon}{0}{ssfletters}{'007}
\DeclareMathSymbol{\bsfPhi}{0}{bsfletters}{'010}
\DeclareMathSymbol{\ssfPhi}{0}{ssfletters}{'010}
\DeclareMathSymbol{\bsfPsi}{0}{bsfletters}{'011}
\DeclareMathSymbol{\ssfPsi}{0}{ssfletters}{'011}
\DeclareMathSymbol{\bsfOmega}{0}{bsfletters}{'012}
\DeclareMathSymbol{\ssfOmega}{0}{ssfletters}{'012}
\def\dotle{\mathrel{\dot{\le}}}
\def\dotge{\mathrel{\dot{\ge}}}
\def\dotpreceq{\mathrel{\dot{\preceq}}}
\def\dotsucceq{\mathrel{\dot{\succeq}}}
\def\dotasymp{\mathrel{\dot{\asymp}}}
\providecommand{\keywords}[1]{\textbf{{Index terms:}} #1}
\providecommand{\subjclass}[1]{\textbf{{2000 Mathematics subject classification:}} #1}
\providecommand{\claimname}{Claim}
\providecommand{\corollaryname}{Corollary}
\providecommand{\definitionname}{Definition}
\providecommand{\examplename}{Example}
\providecommand{\lemmaname}{Lemma}
\providecommand{\propositionname}{Proposition}
\providecommand{\remarkname}{Remark}
\providecommand{\theoremname}{Theorem}
\providecommand{\claimname}{Claim}
\providecommand{\corollaryname}{Corollary}
\providecommand{\definitionname}{Definition}
\providecommand{\examplename}{Example}
\providecommand{\lemmaname}{Lemma}
\providecommand{\propositionname}{Proposition}
\providecommand{\remarkname}{Remark}
\providecommand{\theoremname}{Theorem}
\begin{document}

\title{Beyond the Central Limit Theorem: Universal and Non-universal Simulations
of Random Variables by General Mappings}

\author{Lei Yu\\
 Department of Electrical and Computer Engineering \\
 National University of Singapore, Singapore\\
 Email: leiyu@nus.edu.sg}
\maketitle
%
\maketitle
\begin{abstract}
Motivated by the  Central Limit Theorem, in this paper, we study  both universal and non-universal simulations of random variables with an arbitrary target distribution $Q_{Y}$ by general mappings, not limited to linear ones (as in the Central Limit Theorem). We derive the fastest convergence rate of the approximation errors for such  problems. Interestingly, we show that for discontinuous or absolutely continuous $P_{X}$, the approximation error for the universal simulation is almost as small as that for the non-universal one; and moreover, for both universal and non-universal simulations, the approximation errors by general mappings are strictly smaller than those by linear mappings. Furthermore, we also generalize these results to simulation from Markov processes, and simulation of random elements (or general random variables). 
\end{abstract}

\keywords{ Universal simulation, random number generation, absolutely continuous distribution, total variation distance, Kolmogorov-Smirnov distance, squeezing periodic functions}

\subjclass{Primary 65C10; Secondary 60F05;62E20;62E17}



\section{\label{sec:Introduction}Introduction}

The Central Limit Theorem (CLT) states that for a sequence of i.i.d.
real-valued random variables $X^{n}\sim P_{X}^{n}$, the normalized
sum $\frac{1}{\mathrm{Var}(X)\sqrt{n}}\sum_{i=1}^{n}\left(X_{i}-\mathbb{E}\left[X\right]\right)$
converges in distribution to a standard Gaussian random variable as
$n$ goes to infinity. This implies that an $n$-dimensional i.i.d.
random vector $X^{n}$ can be used to simulate a standard Gaussian
random variable $Y$ by the normalized sum so that the approximation
error asymptotically vanishes under the Kolmogorov\textendash Smirnov
distance. Moreover, from the Berry-Esseen theorem \cite[Sec. XVI.5]{feller2008introduction},
the approximation error vanishes in a rate of $\frac{1}{\sqrt{n}}$.
Note that here, the distribution $P_{X}$ of $X$ is arbitrary, and
given the mean and variance, the linear function is independent of
$P_{X}$. Hence such a linear function can be considered as a universal
linear function. The corresponding simulation problem can be considered
as being universal. In this paper, we consider general universal simulation
problems, in which general\footnote{We say a mapping is general if it is either linear or non-linear.}
simulation functions, not limited to linear ones, are allowed. We
are interested in the following question: What is the optimal convergence
rate for such universal simulation problems? To know how important
the knowledge of the distribution $P_{X}$ is in a simulation, we
are also interested in the optimal convergence rate for non-universal
simulation problems (in which $P_{X}$ is known). Is the optimal convergence
rate for universal simulation as fast as, or strictly slower than,
that for non-universal simulation?

The CLT is about universal simulation of a continuous random variable
(more specifically, a Gaussian random variable). In addition to simulation
of continuous random variables, there are a large number of works
that consider universal simulation of a sequence of discrete (or atomic)
random variables from another sequence of discrete random variables.
In 1951, von Neumann \cite{von1951various} described a procedure
for \emph{exactly} generating a sequence of independent and identically
distributed (i.i.d.) unbiased random coins from a sequence of i.i.d.
biased random coins with an \emph{unknown} distribution. To obtain
unbiased outputs, two pairs of bits $(0,1)$ and $(1,0)$ (which have
the same empirical distribution) are mapped to $0$ and $1$, respectively,
and $(0,0)$ and $(1,1)$ are discarded. Elias \cite{elias1972efficient}
and Blum \cite{blum1986independent} considered a more general situation
in which the process of the repeated coin tosses is subject to an
\emph{unknown} Markov process, instead of a traditional i.i.d. process,
and then studied the efficiency of such a procedure measured according
to the expected number of output coins per input coin. Knuth and Yao
\cite{knuth1976complexity}, Roche \cite{roche1991efficient}, Abrahams
\cite{abrahams1994generation}, and Han and Hoshi \cite{Han1997interval}
considered another general simulation problem in which an arbitrary
target distribution is generated by using a unbiased or biased $M$-coin
(i.e., an $M$-sided coin) but with a \emph{known} distribution. They
showed that the minimum expected number of coin tosses required to
generate the target distribution can be expressed in terms of the
ratio of the entropy of the target distribution to that of the seed
distribution. In all of the works above \cite{von1951various,elias1972efficient,blum1986independent,knuth1976complexity,roche1991efficient,abrahams1994generation,Han1997interval},
simulators are defined as functions that map a variable-length input
sequence to a fixed-length output sequence. Hence, to produce an output
symbol, arbitrarily long delay or waiting time may be required.

To reduce delay, a direction of generalizing the random number generation
problem is to require that an output must be generated for every $k$
bits input from a unbiased or biased coin, for any fixed $k$, but
at the same time, relax the requirement of exact generation to that
of approximate generation. That is, we may require only that the target
distribution should be generated \emph{approximately} within a nonzero
but arbitrarily small tolerance in terms of some suitable distance
measures such as the total variation distance or divergences. Such
a problem in the asymptotic context with \emph{known} seed and target
distributions has been formulated and studied by Han and Verdú \cite{Han};
its inverse problem has been investigated by Vembu and Verdú \cite{vembu1995generating};
and a general version of these problems \textemdash \textendash{}
generating an i.i.d. sequence from another i.i.d. sequence with arbitrary
\emph{known} seed and target distributions \textemdash \textendash{}
has been studied in \cite{Han03,kumagai2017second,yu2018simulation}.

All of the works above only considered simulating a sequence of \emph{discrete}
random variables from another sequence of \emph{discrete} random variables.
In contrast, in this paper we consider approximately\emph{ }generating
an \emph{arbitrary} random variable (or a random element) from a sequence
of random variables (or another random element) with arbitrary but
\emph{unknown} seed distribution.

Besides the CLT, this work is also motivated by the following questions.
1) Given a distribution $Q_{Y}$ (defined on $\left(\mathbb{R},\mathcal{B}_{\mathbb{R}}\right)$),
is there \emph{a measurable function} $f:\mathbb{R}\to\mathbb{R}$
such that $P_{f(X)}=Q_{Y}$ for all absolutely continuous distribution
$P_{X}$? Here $P_{f(X)}$ is the distribution of the image $f(X)$
induced by $P_{X}$ and the function $f$. 2) Given $Q_{Y}$, is there
\emph{a sequence of measurable functions} $f_{k}:\mathbb{R}\to\mathbb{R}$
such that $P_{f_{k}(X)}\to Q_{Y}$ as $k\to\infty$ (under the total
variation distance or other distance measures) for all absolutely
continuous distribution $P_{X}$? By some simple derivations, it is
easy to show that the answer to the first question is negative. So
it is intuitive to conjecture the answer to the second one is also
negative, since the second question reduces to the first question
if the limit of the sequence $\left\{ f_{k}\right\} $ is set to the
function $f$. However, the results in this paper show that this conjecture
is not right, since the limit of the optimal sequence $\left\{ f_{k}\right\} $
does not exist and hence these two questions are not equivalent. Interestingly, we
show that the answer to the second question is \emph{positive}. 

\subsection{Problem Formulation}

Before formulating our problem, we first introduce two statistical
distances. For an arbitrary measurable space $\left(\Omega,\mathcal{B}_{\Omega}\right)$,
we use $\mathcal{P}(\Omega,\mathcal{B}_{\Omega})$ to denote the set
of all the probability measures (a.k.a. distributions) defined on
$\left(\Omega,\mathcal{B}_{\Omega}\right)$. Given an arbitrary measurable
space $\left(\Omega,\mathcal{B}_{\Omega}\right)$, the \emph{total
variation (TV) distance} between two probability measures $P,Q\in\mathcal{P}(\Omega,\mathcal{B}_{\Omega})$
is defined as 
\begin{align*}
\left|P-Q\right|_{\mathrm{TV}} & =\sup_{A\in\mathcal{B}_{\Omega}}\left|P(A)-Q(A)\right|.
\end{align*}
The \emph{Kolmogorov\textendash Smirnov (KS) distance} between two
probability measures $P,Q\in\mathcal{P}(\mathbb{R},\mathcal{B}_{\mathbb{R}})$
is defined as 
\begin{align*}
\left|P-Q\right|_{\mathrm{KS}} & =\sup_{x\in\mathbb{R}}\left|F(x)-G(x)\right|,
\end{align*}
where $F$ and $G$ respectively denote the CDFs (cumulative distribution
functions) of $P$ and $Q$. For $P,Q\in\mathcal{P}(\mathbb{R},\mathcal{B}_{\mathbb{R}})$,
we have 
\[
0\le\left|P-Q\right|_{\mathrm{KS}}\leq\left|P-Q\right|_{\mathrm{TV}},
\]
since $\left|P-Q\right|_{\mathrm{KS}}=\sup_{A\in\mathcal{I}}\left|P(A)-Q(A)\right|$
with $\mathcal{I}:=\left\{ (-\infty,y]:\:y\in\mathbb{R}\right\} \subseteq\mathcal{B}_{\mathbb{R}}$.
Furthermore, both $\left|P-Q\right|_{\mathrm{KS}}$ and $\left|P-Q\right|_{\mathrm{TV}}$
are metrics, and hence $\left|P-Q\right|_{\mathrm{KS}}=0\;\Longleftrightarrow\;P=Q$
 and $\left|P-Q\right|_{\mathrm{TV}}=0\;\Longleftrightarrow\;P=Q$.

Based on these two distances, we next formulate our problem. In this
paper, we consider the following problem: When we use an $n$-dimensional
real-valued random vector $X^{n}$ with distribution $P_{X^{n}}$
to generate a real-valued random variable $Y$ by a function $y=f(x^{n})$
so that its distribution is approximately $Q_{Y}$, what is the fastest
convergence speed of the approximation error over all functions $f$
as $n$ tends to infinity? Here the approximation error is measured
by the TV distance or the KS distance. We term the Borel space $\left(\mathbb{R}^{n},\mathcal{B}_{\mathbb{R}^{n}}\right)$
of $X^{n}$ as the seed space, and the Borel space $\left(\mathbb{R},\mathcal{B}_{\mathbb{R}}\right)$
of $Q_{Y}$ as the target space.
\begin{defn}
Given the seed Borel space $\left(\mathbb{R}^{n},\mathcal{B}_{\mathbb{R}^{n}}\right)$
and the target Borel space $\left(\mathbb{R},\mathcal{B}_{\mathbb{R}}\right)$,
a simulator is a measurable function $f:\mathbb{R}^{n}\to\mathbb{R}$. 
\end{defn}
Given a random vector $X^{n}\sim P_{X^{n}}$ and a target distribution
$Q_{Y}$, we want to find an optimal simulator $Y=f(X^{n})$ that
minimizes the TV distance or the KS distance between the output distribution
$P_{Y}:=P_{X^{n}}\circ f^{-1}$ (the distribution of the output random
variable $Y$) and the target distribution $Q_{Y}$. For such a simulation
problem, we consider two different scenarios where $P_{X^{n}}$ is
respectively known and unknown a priori.

As illustrated in Fig. \ref{fig:simulation} (a), if the seed distribution
$P_{X^{n}}$ is \emph{unknown}, but the class $\mathcal{P}_{X^{n}}\subseteq\mathcal{P}(\mathbb{R}^{n},\mathcal{B}_{\mathbb{R}^{n}})$
that $P_{X^{n}}$ belongs to is known, we term such simulation problems
as \emph{(universal)} \emph{$(\mathcal{P}_{X^{n}},Q_{Y})$-simulation
problems}. Hence, the simulator $f:\mathbb{R}^{n}\to\mathbb{R}$ in
the universal simulation problem may depend on everything including
$Q_{X}$ and $\mathcal{P}_{X^{n}}$, but except for $P_{X^{n}}$.
That is, it is independent of $P_{X^{n}}$ given $\mathcal{P}_{X^{n}}$.
Next we give a mathematical formulation for the universal simulation
problem, which avoids ambiguous languages, like ``$P_{X^{n}}$ is
unknown''.

\begin{figure}
\centering\subfloat[The universal $(\mathcal{P}_{X^{n}},Q_{Y})$-simulation problem]{\centering \setlength{\unitlength}{0.05cm}\scalebox{1.2}{ \begin{picture}(140,50)
\put(5,15){\vector(1,0){35}} \put(40,5){\framebox(50,20){%
\parbox[c]{35in}{%
\centering Simulator \\
 $Y=f(X^{n})$%
}}} \put(90,15){\vector(1,0){45}} \put(65,40){\vector(0,-1){15}}
\put(5,20){%
\mbox{%
$X^{n}\sim P_{X^{n}}$%
}} \put(100,20){%
\mbox{%
$Y$ s.t. $P_{Y}\approx Q_{Y}$%
}} \put(48,45){%
\mbox{%
$\left(\mathcal{P}_{X^{n}},Q_{Y}\right)$%
}} \end{picture}} 

}

\subfloat[The non-universal $(P_{X^{n}},Q_{Y})$-simulation problem]{\centering \setlength{\unitlength}{0.05cm}\scalebox{1.2}{ \begin{picture}(140,50)
\put(5,15){\vector(1,0){35}} \put(40,5){\framebox(50,20){%
\parbox[c]{35in}{%
\centering Simulator \\
 $Y=f(X^{n})$%
}}} \put(90,15){\vector(1,0){45}} \put(65,40){\vector(0,-1){15}}
\put(5,20){%
\mbox{%
$X^{n}\sim P_{X^{n}}$%
}} \put(100,20){%
\mbox{%
$Y$ s.t. $P_{Y}\approx Q_{Y}$%
}} \put(48,45){%
\mbox{%
$\left(P_{X^{n}},Q_{Y}\right)$%
}} \end{picture}} 

}

\caption{Universal and non-universal simulation problems.}
\label{fig:simulation} 
\end{figure}
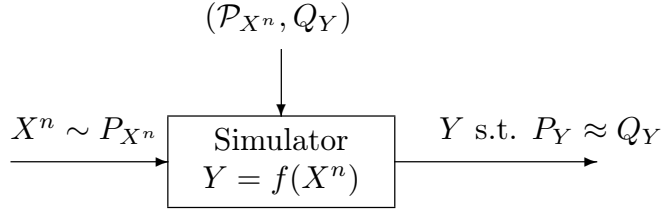
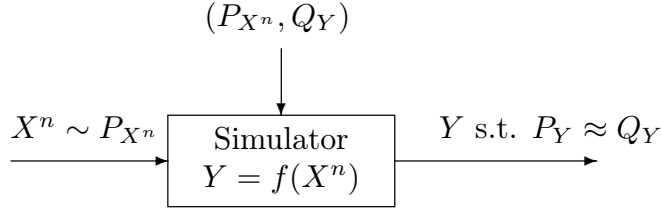
\begin{defn}
A function $g:\mathcal{P}_{X^{n}}\to\mathbb{R}$ is called \emph{TV-achievable
(resp. KS-achievable) } for the \emph{universal} $(\mathcal{P}_{X^{n}},Q_{Y})$-simulation,
if there exists a sequence of simulators $\left\{ f_{n,k}\right\} _{k=1}^{\infty}$
such that 
\[
\limsup_{k\to\infty}\left|P_{Y_{n,k}}-Q_{Y}\right|_{\theta}\leq g(P_{X^{n}})
\]
for all $P_{X^{n}}\in\mathcal{P}_{X^{n}}$, where $P_{Y_{n,k}}:=P_{X^{n}}\circ f_{n,k}^{-1}$
and $\theta=\mathrm{TV}$ (resp. $\theta=\mathrm{KS}$). 
\end{defn}
\begin{defn}
\emph{The set of TV-achievable (resp. KS-achievable) functions} for
the \emph{universal} $(\mathcal{P}_{X^{n}},Q_{Y})$-simulation is
defined as 
\[
\mathcal{E}_{\theta}(\mathcal{P}_{X^{n}},Q_{Y}):=\left\{ g:\mathcal{P}_{X^{n}}\to\mathbb{R}\::g\textrm{ is }\theta\raisebox{.75pt}{-}\textrm{achievable}\right\} 
\]
where $\theta=\mathrm{TV}$ (resp. $\theta=\mathrm{KS}$). 
\end{defn}
According to Lebesgue's decomposition theorem \cite{dudley2002real},
the distributions of real-valued random variables can be partitioned
into three classes\footnote{We say a distribution (or a probability measure) $P$ is discrete
(or atomic) if it is purely atomic; continuous if it does not have
any atoms; discontinuous if it has at least one atom; absolutely continuous
if it is absolutely continuous with respect to Lebesgue measure, i.e.,
having a probability density function; and singular continuous if
it is continuous, and meanwhile, singular with respect to Lebesgue
measure.}: discontinuous distributions (including discrete distributions and
mixtures of discrete and continuous distributions), absolutely continuous
distributions, and continuous but not absolutely continuous distributions
(including singular continuous distributions and mixtures of singular
continuous and absolutely continuous distributions). The sets of these
distributions are respectively denoted as $\mathcal{P}_{\mathrm{dc}},\mathcal{P}_{\mathrm{ac}}$,
and $\mathcal{P}_{\mathrm{c}}\backslash\mathcal{P}_{\mathrm{ac}}$,
where $\mathcal{P}_{\mathrm{c}}=\mathcal{P}(\mathbb{R},\mathcal{B}_{\mathbb{R}})\backslash\mathcal{P}_{\mathrm{dc}}$
denotes the set of continuous distributions on $\left(\mathbb{R},\mathcal{B}_{\mathbb{R}}\right)$.

For the i.i.d. case, we define $\mathcal{P}_{X}^{(n)}:=\left\{ P_{X}^{n}:P_{X}\in\mathcal{P}_{X}\right\} $.
In this paper, we want to characterize $\mathcal{E}_{\theta}(\mathcal{P}_{X}^{(n)},Q_{Y})$
with $\mathcal{P}_{X}$ respectively set to $\mathcal{P}_{\mathrm{dc}},\mathcal{P}_{\mathrm{ac}}$,
or $\mathcal{P}_{\mathrm{c}}\backslash\mathcal{P}_{\mathrm{ac}}$.
Note that $\mathcal{E}_{\theta}(\mathcal{P}_{X}^{(n)},Q_{Y})$ is
an upper set. For brevity, for such sets and a function $g:\mathcal{P}_{X}\to\mathbb{R}$,
we denote $\mathcal{E}_{\theta}(\mathcal{P}_{X}^{(n)},Q_{Y})\preceq g$
if there exists a $g'\in\mathcal{E}_{\theta}(\mathcal{P}_{X}^{(n)},Q_{Y})$
such that $g'(P_{X})\leq g(P_{X}),\forall P_{X}\in\mathcal{P}_{X}$;
and $\mathcal{E}_{\theta}(\mathcal{P}_{X}^{(n)},Q_{Y})\succeq g$
if $g'(P_{X})\geq g(P_{X}),\forall P_{X}\in\mathcal{P}_{X}$ for all
$g'\in\mathcal{E}_{\theta}(\mathcal{P}_{X}^{(n)},Q_{Y})$. In addition,
$\mathcal{E}_{\theta}(\mathcal{P}_{X}^{(n)},Q_{Y})\asymp g$ if and
only if $\mathcal{E}_{\theta}(\mathcal{P}_{X}^{(n)},Q_{Y})\preceq g$
and $\mathcal{E}_{\theta}(\mathcal{P}_{X}^{(n)},Q_{Y})\succeq g$.
In general, there does not necessarily exist $g$ such that $\mathcal{E}_{\theta}(\mathcal{P}_{X}^{(n)},Q_{Y})\asymp g$.
However, if it exists, then $g(P_{X})=\inf_{g'\in\mathcal{E}_{\theta}(\mathcal{P}_{X}^{(n)},Q_{Y})}g'(P_{X})$
for $P_{X}\in\mathcal{P}_{X}$.

Similarly, we write $\mathcal{E}_{\theta}(\mathcal{P}_{X}^{(n)},Q_{Y})\dotpreceq g_{n}$
if there exists a $g_{n}'\in\mathcal{E}_{\theta}(\mathcal{P}_{X}^{(n)},Q_{Y})$
such that\footnote{Throughout this paper, for two positive sequences $f(n),g(n)$, we
write $f(n)\dotle g(n)$ or $g(n)\dotge f(n)$ if $\limsup_{n\to\infty}\frac{1}{n}\log\frac{f(n)}{g(n)}\le0$.
In addition, $f(n)\doteq g(n)$ if and only if $f(n)\dotle g(n)$
and $f(n)\dotge g(n)$. } $g_{n}'(P_{X})\dotle g_{n}(P_{X}),\forall P_{X}\in\mathcal{P}_{X}$;
and $\mathcal{E}_{\theta}(\mathcal{P}_{X}^{(n)},Q_{Y})\dotsucceq g$
if $g_{n}'(P_{X})\dotge g_{n}(P_{X}),\forall P_{X}\in\mathcal{P}_{X}$
for all $g_{n}'\in\mathcal{E}_{\theta}(\mathcal{P}_{X}^{(n)},Q_{Y})$.
In addition, $\mathcal{E}_{\theta}(\mathcal{P}_{X}^{(n)},Q_{Y})\dotasymp g_{n}$
if and only if $\mathcal{E}_{\theta}(\mathcal{P}_{X}^{(n)},Q_{Y})\dotpreceq g$
and $\mathcal{E}_{\theta}(\mathcal{P}_{X}^{(n)},Q_{Y})\dotsucceq g$.
Furthermore, $\mathcal{E}_{\theta}(\mathcal{P}_{X}^{(n)},Q_{Y})\asymp e^{-\Omega(g_{n})}$
(resp. $\mathcal{E}_{\theta}(\mathcal{P}_{X}^{(n)},Q_{Y})\asymp e^{-\omega(g_{n})}$)
if there exists a $g_{n}'\in\mathcal{E}_{\theta}(\mathcal{P}_{X}^{(n)},Q_{Y})$
such that\footnote{For two positive sequences $f(n),g(n)$, we write $f(n)=\Omega(g(n))$
(resp. $f(n)=\omega(g(n))$) if $\liminf_{n\to\infty}\frac{f(n)}{g(n)}>0$
(resp. $\lim_{n\to\infty}\frac{f(n)}{g(n)}=\infty$). } $g_{n}'(P_{X})=e^{-\Omega(g_{n}(P_{X}))}$ (resp. $g_{n}'(P_{X})=e^{-\omega(g_{n}(P_{X}))}$)
for all $P_{X}\in\mathcal{P}_{X}$.

Conversely, as illustrated in Fig. \ref{fig:simulation} (b), if $P_{X^{n}}$
is \emph{known}, we term such problems as \emph{(non-universal) $(P_{X^{n}},Q_{Y})$-simulation
problems}. The simulator $f:\mathbb{R}^{n}\to\mathbb{R}$ in the non-universal
simulation problem may depend on all of $P_{X^{n}},Q_{X}$, etc. 
\begin{defn}
The optimal \emph{TV-achievable (resp. KS-achievable)} approximation
error for the \emph{non-universal} $(P_{X^{n}},Q_{Y})$-simulation
is defined as 
\[
E_{\theta}(P_{X^{n}},Q_{Y}):=\inf_{f_{n}:\mathbb{R}^{n}\to\mathbb{R}}\left|P_{Y_{n}}-Q_{Y}\right|_{\theta},
\]
where $P_{Y_{n}}:=P_{X^{n}}\circ f_{n}^{-1}$ and $\theta=\mathrm{TV}$
(resp. $\theta=\mathrm{KS}$). 
\end{defn}
The non-universal $(P_{X^{n}},Q_{Y})$-simulation problem can be seen
as a special universal $(\mathcal{P}_{X^{n}},Q_{Y})$-simulation problem
with $\mathcal{P}_{X^{n}}$ set to $\left\{ P_{X^{n}}\right\} $.
Hence $\mathcal{E}_{\theta}(\left\{ P_{X^{n}}\right\} ,Q_{Y})\asymp E_{\theta}(P_{X^{n}},Q_{Y})$.
On the other hand, by definitions, the set $\mathcal{E}_{\theta}(\mathcal{P}_{X^{n}},Q_{Y})$
for the \emph{universal} $(\mathcal{P}_{X^{n}},Q_{Y})$-simulation
with $\theta\in\left\{ \mathrm{KS},\mathrm{TV}\right\} $ must satisfy
$\mathcal{E}_{\theta}(\mathcal{P}_{X^{n}},Q_{Y})\succeq E_{\theta}(P_{X^{n}},Q_{Y})$.
That is, the approximation errors for non-universal simulation problems
are not larger than those for universal simulation problems.

In general, simulating a continuous random variable is more difficult
than simulating a discontinuous one, as stated in the following lemma.
Hence in this paper, sometimes we only provide upper bounds on the
approximation errors for simulating continuous random variables. It
should be understood that those upper bounds are also upper bounds
for simulating any other random variables (e.g., discrete random variables).
Furthermore, to make our results easier to follow, we summarize them
in Table \ref{tab:Summary-of-our}.

\begin{table*}
\centering%
\begin{tabular}{|>{\centering}p{7cm}|c|}
\hline 
\multicolumn{2}{|c|}{\textbf{Simulating a Random Variable from a Stationary Memoryless
Process}}\tabularnewline
\hline 
\hline 
$\left(\mathcal{P}_{X},\mathcal{Q}_{Y}\right)$  & \emph{Non-universal Simulation}\tabularnewline
\hline 
(continuous, arbitrary)  & $E_{\theta}(P_{X},Q_{Y})=0$ for $\theta\in\left\{ \mathrm{KS},\mathrm{TV}\right\} $
(Prop. \ref{prop:NUcontinuous}) \tabularnewline
\hline 
(discontinuous, arbitrary)  & $E_{\mathrm{KS}}(P_{X}^{n},Q_{Y})\leq\frac{1}{2}\left(\max_{x}P_{X}(x)\right)^{n}$
(Cor. \ref{cor:NUdiscontinuous} \& Lem. \ref{lem:continuousisharder})\tabularnewline
\hline 
\textbf{Special Case 1:} (discontinuous, continuous)  & $E_{\mathrm{KS}}(P_{X}^{n},Q_{Y})=\frac{1}{2}\left(\max_{x}P_{X}(x)\right)^{n}$
(Cor. \ref{cor:NUdiscontinuous})\tabularnewline
\hline 
\textbf{Special Case 2:} (discrete with finite alphabet, discrete
with finite alphabet)  & $E_{\mathrm{KS}}(P_{X}^{n},Q_{Y})\dotle\left(\min_{x}P_{X}(x)\right)^{n}$
(Prop. \ref{prop:NUdiscrete})\tabularnewline
\hline 
$\left(\mathcal{P}_{X},\mathcal{Q}_{Y}\right)$  & \emph{Universal Simulation}\tabularnewline
\hline 
(absolutely continuous, arbitrary)  & $\mathcal{E}_{\theta}(\mathcal{P}_{X},Q_{Y})\asymp0$ for $\theta\in\left\{ \mathrm{KS},\mathrm{TV},\mathrm{Renyi}\right\} $
(Thm. \ref{thm:Ucontinuous} \& \ref{thm:Ucontinuous-1} )\tabularnewline
\hline 
(discontinuous, arbitrary)  & $\mathcal{E}_{\mathrm{KS}}(\mathcal{P}_{X}^{(n)},Q_{Y})\dotpreceq\left(\max_{x}P_{X}(x)\right)^{n}$
(Cor. \ref{cor:Udiscontinuous} \& Lem. \ref{lem:continuousisharder})\tabularnewline
\hline 
\textbf{Special Case:} (discontinuous, continuous)  & $\mathcal{E}_{\mathrm{KS}}(\mathcal{P}_{X}^{(n)},Q_{Y})\dotasymp\left(\max_{x}P_{X}(x)\right)^{n}$
(Cor. \ref{cor:Udiscontinuous})\tabularnewline
\hline 
(continuous but not absolutely continuous, arbitrary)  & $\mathcal{E}_{\mathrm{KS}}(\mathcal{P}_{X}^{(n)},Q_{Y})\asymp e^{-\omega\left(n\right)}$
(Cor. \ref{cor:Uc-ac})\tabularnewline
\hline 
\textbf{Special Case:} ($F_{X}$ is Hölder continuous with exponent
$\alpha$ where $0<\alpha\leq1$, arbitrary)  & $\mathcal{E}_{\mathrm{KS}}(\mathcal{P}_{X}^{(n)},Q_{Y})\asymp e^{-\alpha\Omega\left(n\log n\right)}$
(Cor. \ref{cor:Uc-ac})\tabularnewline
\hline 
\multicolumn{1}{>{\centering}p{7cm}}{} & \multicolumn{1}{c}{}\tabularnewline
\hline 
\multicolumn{2}{|c|}{\textbf{Simulating a Random Variable from a Markov Process with Order
$k$}}\tabularnewline
\hline 
\hline 
$\left(P_{X^{n}},\mathcal{Q}_{Y}\right)$  & \emph{Non-universal Simulation}\tabularnewline
\hline 
(a Markov chain of order $k$ with finite state space $\mathcal{X}$
and initial state $x_{-k+1}^{0}$, arbitrary)  & $E_{\mathrm{KS}}(P_{X^{n}},Q_{Y})\dotle e^{-nH_{\text{\ensuremath{\infty}}}(x_{-k+1}^{0},P_{X_{k+1}|X^{k}})}$
(Cor. \ref{cor:NUMarkov} \& Lem. \ref{lem:continuousisharder})\tabularnewline
\hline 
\textbf{Special Case:} (a Markov chain of order $k$ with finite state
space $\mathcal{X}$ and initial state $x_{-k+1}^{0}$, continuous)  & $E_{\mathrm{KS}}(P_{X^{n}},Q_{Y})\doteq e^{-nH_{\text{\ensuremath{\infty}}}(x_{-k+1}^{0},P_{X_{k+1}|X^{k}})}$
(Cor. \ref{cor:NUMarkov})\tabularnewline
\hline 
$\left(P_{X^{n}},\mathcal{Q}_{Y}\right)$  & \emph{Universal Simulation}\tabularnewline
\hline 
(a Markov chain of order $k$ with finite state space $\mathcal{X}$
and initial state $x_{-k+1}^{0}$, arbitrary)  & $\mathcal{E}_{\mathrm{KS}}(\mathcal{P}_{X_{k+1}|X^{k}}^{(n)},Q_{Y})\dotpreceq e^{-nH_{\text{\ensuremath{\infty}}}(x_{-k+1}^{0},P_{X_{k+1}|X^{k}})}$
(Thm. \ref{thm:UMarkov} \& Lem. \ref{lem:continuousisharder})\tabularnewline
\hline 
\textbf{Special Case:} (a Markov chain of order $k$ with finite state
space $\mathcal{X}$ and initial state $x_{-k+1}^{0}$, continuous)  & $\mathcal{E}_{\mathrm{KS}}(\mathcal{P}_{X_{k+1}|X^{k}}^{(n)},Q_{Y})\dotasymp e^{-nH_{\text{\ensuremath{\infty}}}(x_{-k+1}^{0},P_{X_{k+1}|X^{k}})}$
(Thm. \ref{thm:UMarkov})\tabularnewline
\hline 
\multicolumn{1}{>{\centering}p{7cm}}{} & \multicolumn{1}{c}{}\tabularnewline
\hline 
\multicolumn{2}{|c|}{\textbf{Simulating a Random Element from another Random Element }}\tabularnewline
\hline 
\hline 
$\left(\mathcal{P}_{X},\mathcal{Q}_{Y}\right)$  & \emph{Non-universal Simulation}\tabularnewline
\hline 
(continuous, arbitrary)  & $E_{\mathrm{TV}}(P_{X},Q_{Y})=0$ (Thm. \ref{thm:NUgeneralRV})\tabularnewline
\hline 
\textbf{Special Case:} (continuous random variable, arbitrary random
vector)  & $E_{\theta}(P_{X},Q_{Y})=0$ for $\theta\in\left\{ \mathrm{KS},\mathrm{TV}\right\} $
(Cor. \ref{cor:NUvector})\tabularnewline
\hline 
$\left(\mathcal{P}_{X},\mathcal{Q}_{Y}\right)$  & \emph{Universal Simulation }\tabularnewline
\hline 
(absolutely continuous respect to a continuous distribution, arbitrary)  & $\mathcal{E}_{\mathrm{TV}}(\mathcal{P}_{X},Q_{Y})\asymp0$ (Thm. \ref{thm:UgeneralRV})\tabularnewline
\hline 
\textbf{Special Case: }(absolutely continuous random variable, arbitrary
random vector)  & $\mathcal{E}_{\theta}(\mathcal{P}_{X},Q_{Y})\asymp0$ for $\theta\in\left\{ \mathrm{KS},\mathrm{TV}\right\} $
(Cor. \ref{cor:Uvector})\tabularnewline
\hline 
\end{tabular}\caption{\label{tab:Summary-of-our}Summary of our results. Here $\mathcal{P}_{X}$
and $\mathcal{Q}_{Y}$ respectively denote the classes that $P_{X}$ and $Q_{Y}$
 belong to.}
\end{table*}
\begin{lem}
\label{lem:continuousisharder} Assume $Q_{Y}$ and $Q_{Z}$ are two
distributions defined on $\left(\mathbb{R},\mathcal{B}_{\mathbb{R}}\right)$,
and moreover, $Q_{Y}$ is continuous. Then the approximation errors
for non-universal and universal simulations satisfy 
\begin{align}
E_{\theta}(P_{X^{n}},Q_{Z}) & \leq E_{\theta}(P_{X^{n}},Q_{Y}),\label{eq:-25}\\
\mathcal{E}_{\theta}(\mathcal{P}_{X^{n}},Q_{Z}) & \supseteq\mathcal{E}_{\theta}(\mathcal{P}_{X^{n}},Q_{Y}),\label{eq:-28}
\end{align}
for any $P_{X^{n}}$ and $\mathcal{P}_{X^{n}}$, where $\theta\in\left\{ \mathrm{KS},\mathrm{TV}\right\} $. 
\end{lem}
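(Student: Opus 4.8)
The plan is to reduce the $Q_{Z}$-problem to the $Q_{Y}$-problem by post-composing every simulator with a single fixed, monotone, distribution-independent transport map. First I would build that map. Let $F_{Y},F_{Z}$ be the CDFs of $Q_{Y},Q_{Z}$, let $F_{Z}^{-1}(u):=\inf\{z:F_{Z}(z)\ge u\}$ be the (left-continuous) quantile function, and set $\phi:=F_{Z}^{-1}\circ F_{Y}:\mathbb{R}\to\mathbb{R}$. As a composition of nondecreasing functions, $\phi$ is nondecreasing, hence Borel measurable. Because $Q_{Y}$ is continuous, $F_{Y}$ is continuous, so the probability integral transform yields $F_{Y}(Y)\sim\mathrm{Unif}[0,1]$ for $Y\sim Q_{Y}$, while the quantile transform yields $F_{Z}^{-1}(U)\sim Q_{Z}$ for $U\sim\mathrm{Unif}[0,1]$ and the arbitrary $Q_{Z}$; composing, $\phi$ pushes $Q_{Y}$ forward to $Q_{Z}$, i.e.\ $Q_{Y}\circ\phi^{-1}=Q_{Z}$.

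The core estimate I would prove is a data-processing inequality under a monotone map: for all $\mu,\nu\in\mathcal{P}(\mathbb{R},\mathcal{B}_{\mathbb{R}})$ and every nondecreasing measurable $h:\mathbb{R}\to\mathbb{R}$,
\[
\left|\mu\circ h^{-1}-\nu\circ h^{-1}\right|_{\theta}\le\left|\mu-\nu\right|_{\theta},\qquad\theta\in\{\mathrm{KS},\mathrm{TV}\}.
\]
For $\theta=\mathrm{TV}$ this is immediate from the definition, since $\{h^{-1}(B):B\in\mathcal{B}_{\mathbb{R}}\}\subseteq\mathcal{B}_{\mathbb{R}}$. For $\theta=\mathrm{KS}$ I would fix $z$ and observe that $A_{z}:=\{y:h(y)\le z\}$ is, by monotonicity of $h$, a half-line $(-\infty,s_{z}]$ or $(-\infty,s_{z})$ with $s_{z}:=\sup A_{z}\in[-\infty,+\infty]$, where which of the two alternatives occurs depends only on $h$ and $z$, not on the measure; hence the CDF of $\mu\circ h^{-1}$ at $z$ equals $F_{\mu}(s_{z})$ or $F_{\mu}(s_{z}^{-})$, and the identical expression (same $s_{z}$, same endpoint convention) computes the CDF of $\nu\circ h^{-1}$ at $z$, so the two CDFs differ by $|F_{\mu}(s_{z})-F_{\nu}(s_{z})|$ or by $\lim_{t\uparrow s_{z}}|F_{\mu}(t)-F_{\nu}(t)|$, each at most $\left|\mu-\nu\right|_{\mathrm{KS}}$ (with difference $0$ in the degenerate cases $s_{z}=\pm\infty$). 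Taking the supremum over $z$ completes the estimate.

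Then I would assemble both conclusions. Given any simulator $f_{n}:\mathbb{R}^{n}\to\mathbb{R}$ with output $P_{Y_{n}}=P_{X^{n}}\circ f_{n}^{-1}$, the simulator $g_{n}:=\phi\circ f_{n}$ has output $P_{Z_{n}}=P_{X^{n}}\circ g_{n}^{-1}=P_{Y_{n}}\circ\phi^{-1}$, whence, using $Q_{Z}=Q_{Y}\circ\phi^{-1}$ and the estimate above, $\left|P_{Z_{n}}-Q_{Z}\right|_{\theta}=\left|P_{Y_{n}}\circ\phi^{-1}-Q_{Y}\circ\phi^{-1}\right|_{\theta}\le\left|P_{Y_{n}}-Q_{Y}\right|_{\theta}$ for $\theta\in\{\mathrm{KS},\mathrm{TV}\}$; taking the infimum over $f_{n}$ gives \eqref{eq:-25}. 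For \eqref{eq:-28}, if $g\in\mathcal{E}_{\theta}(\mathcal{P}_{X^{n}},Q_{Y})$ is witnessed by $\{f_{n,k}\}_{k}$, then $\{\phi\circ f_{n,k}\}_{k}$ witnesses $g\in\mathcal{E}_{\theta}(\mathcal{P}_{X^{n}},Q_{Z})$, because $\limsup_{k\to\infty}\left|P_{Z_{n,k}}-Q_{Z}\right|_{\theta}\le\limsup_{k\to\infty}\left|P_{Y_{n,k}}-Q_{Y}\right|_{\theta}\le g(P_{X^{n}})$ for all $P_{X^{n}}\in\mathcal{P}_{X^{n}}$; note that the argument nowhere uses an i.i.d.\ or product structure of $P_{X^{n}}$ or $\mathcal{P}_{X^{n}}$.

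The main obstacle I anticipate is purely in the $\mathrm{KS}$ half of the monotone data-processing estimate: carefully tracking whether the preimage half-line $A_{z}$ is closed or open at its endpoint, and correctly bounding left limits of CDFs by the $\mathrm{KS}$ distance. The remaining ingredients — measurability of $\phi$, the two transform identities, and the pushforward identity $(\phi\circ f_{n})^{-1}=f_{n}^{-1}\circ\phi^{-1}$ — are routine.
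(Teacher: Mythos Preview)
Your proposal is correct and follows essentially the same approach as the paper: both construct the monotone transport map $\phi=G_{Z}^{-1}\circ F_{Y}$ (Proposition~\ref{prop:NUcontinuous} in the paper) and then invoke a data-processing inequality for the pushforward under $\phi$. Your treatment of the $\mathrm{KS}$ case is in fact more careful than the paper's, which tacitly writes $g^{-1}(A)\in\mathcal{I}$ without distinguishing whether the preimage half-line is closed or open at its endpoint; your handling of the left-limit case plugs that small gap.
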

\begin{proof}
Proposition \ref{prop:NUcontinuous} (which is given in the next section)
states that there exists a non-decreasing mapping $z=g(y):\:\mathbb{R}\to\mathbb{R}$
such that $Z=g(Y)\sim Q_{Z}$, where $Y\sim Q_{Y}$. Observe that
for any $P_{Z}$,
\begin{align*}
\left|P_{Z}-Q_{Z}\right|_{\mathrm{KS}} & =\sup_{A\in\mathcal{I}}\left|P_{Z}(A)-Q_{Z}(A)\right|\\
 & =\sup_{A\in\mathcal{I}}\left|P_{Y}(g^{-1}(A))-Q_{Y}(g^{-1}(A))\right|\\
 & \leq\sup_{B\in\mathcal{I}}\left|P_{Y}(B)-Q_{Y}(B)\right|\\
 & =\left|P_{Y}-Q_{Y}\right|_{\mathrm{KS}},
\end{align*}
where $\mathcal{I}:=\left\{ (-\infty,y]:\:y\in\mathbb{R}\right\} $.
Hence \eqref{eq:-25} and \eqref{eq:-28} hold for $\theta=\mathrm{KS}$.

By similar steps but with $\mathcal{I}$ replaced by $\mathcal{B}_{\mathbb{R}}$,
we can easily obtain that \eqref{eq:-25} and \eqref{eq:-28} also
hold for $\theta=\mathrm{TV}$. 
\end{proof}

\section{Non-universal Simulation from a Stationary Memoryless Process }

In this section, we consider non-universal simulation of a real-valued
random variable. If the seed distribution $P_{X}$ is continuous and
the target distribution $Q_{Y}$ is arbitrary, then we can simulate
a random variable $Y$ that exactly follows the distribution $Q_{Y}$.
 The following is a well-known result for such a case. Hence the
proof is omitted.
\begin{prop}
\label{prop:NUcontinuous}For a continuous distribution $P_{X}$ and
an arbitrary distribution $Q_{Y}$, using the inverse transform sampling
function $y=G_{Y}^{-1}\left(F_{X}(x)\right)$, we obtain $P_{Y}=Q_{Y}$,
where\footnote{Here the minimum exists since CDFs are right-continuous.}
$G_{Y}^{-1}\left(t\right):=\min\left\{ y:G_{Y}(y)\geq t\right\} $
denotes the quantile function (generalized inverse distribution function)
of $G_{Y}$. That is, $E_{\theta}(P_{X},Q_{Y})=0$ for $\theta\in\left\{ \mathrm{KS},\mathrm{TV}\right\} $.
\end{prop}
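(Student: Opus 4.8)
The plan is to show that the proposed simulator achieves the target distribution \emph{exactly}, i.e. $P_Y = Q_Y$ for $Y = f(X)$ with $f := G_Y^{-1}\circ F_X$; since $\left|\cdot-\cdot\right|_{\mathrm{KS}}$ and $\left|\cdot-\cdot\right|_{\mathrm{TV}}$ are metrics and $E_\theta$ is an infimum of nonnegative quantities, exhibiting one $f$ with zero distance gives $E_\theta(P_X,Q_Y)=0$ for $\theta\in\{\mathrm{KS},\mathrm{TV}\}$. The argument factors through the uniform law: I would first show that $U := F_X(X)$ is uniform on $[0,1]$ (this is exactly where continuity of $P_X$ is used), and then invoke the standard inverse-transform fact that the quantile function $G_Y^{-1}$ pushes $\mathrm{Uniform}[0,1]$ forward to $Q_Y$. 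Note first that $f$ is a composition of nondecreasing functions, hence Borel measurable, so it is an admissible simulator $\mathbb{R}\to\mathbb{R}$.

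For the first step, fix $u\in(0,1)$ and put $x_u := \sup\{x : F_X(x)\le u\}$. Since $u\in(0,1)$ and $F_X(x)\to 0,1$ as $x\to\mp\infty$, the quantity $x_u$ is finite. Monotonicity of $F_X$ gives $F_X(x)\le u$ for $x<x_u$ and $F_X(x)>u$ for $x>x_u$; because $P_X$ is continuous, $F_X$ is a continuous real function, so passing to one-sided limits at $x_u$ yields $F_X(x_u)=u$ and hence $\{x:F_X(x)\le u\}=(-\infty,x_u]$. Therefore $\Pr(U\le u)=P_X((-\infty,x_u])=F_X(x_u)=u$, so $U$ has the uniform CDF on $[0,1]$. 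The values $u\in\{0,1\}$ are dispatched by the same monotonicity-and-continuity bookkeeping (e.g. $\Pr(U\le 0)=P_X(\{x:F_X(x)=0\})=0$ by continuity, and $\Pr(U\le 1)=1$ trivially).

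For the second step, right-continuity of $G_Y$ together with the definition $G_Y^{-1}(t)=\min\{y:G_Y(y)\ge t\}$ yields the Galois-type equivalence $G_Y^{-1}(t)\le y \iff t\le G_Y(y)$ for all $t\in(0,1)$, $y\in\mathbb{R}$. Hence for $U\sim\mathrm{Uniform}[0,1]$ we get $\Pr(G_Y^{-1}(U)\le y)=\Pr(U\le G_Y(y))=G_Y(y)$, so $G_Y^{-1}(U)$ has CDF $G_Y$, i.e. law $Q_Y$. Combining the two steps, $Y=G_Y^{-1}(F_X(X))=G_Y^{-1}(U)\sim Q_Y$, so $P_Y=Q_Y$ and the claim follows.

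The only genuinely delicate point is Step 1: one must invoke continuity of $F_X$ (equivalently, that $P_X$ has no atoms) to conclude $F_X(x_u)=u$; if $F_X$ had a jump, the pushforward $F_X(X)$ would fail to be uniform and the construction would break. Everything else is the textbook inverse-transform sampling computation, which is why the paper states the result without proof.
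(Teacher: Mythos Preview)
Your proof is correct and is the standard inverse-transform sampling argument. The paper itself omits the proof entirely, stating that ``the following is a well-known result for such a case. Hence the proof is omitted,'' so there is nothing to compare against; your write-up simply supplies the textbook details the authors chose to skip.
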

Next we consider the case $P_{X}$ is discontinuous. For this case,
exact simulation cannot be obtained. 
\begin{prop}
\label{prop:NUdiscontinuous} Assume $P_{X}$ is discontinuous and
$Q_{Y}$ is continuous. Then for the non-universal $(P_{X},Q_{Y})$-simulation
problem,\footnote{For simplicity, we denote $P_{X}(\{x\})$ for $x\in\mathbb{R}$ as
$P_{X}(x)$. Hence for a discrete random variable $X$, $P_{X}(x)$
is the probability mass function of $X$. } $E_{\mathrm{KS}}(P_{X},Q_{Y})=\frac{1}{2}\max_{x}P_{X}(x)$. 
\end{prop}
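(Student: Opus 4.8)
The plan is to prove the two-sided bound $E_{\mathrm{KS}}(P_{X},Q_{Y})=\frac{1}{2}\max_{x}P_{X}(x)$ by establishing the upper and lower bounds separately. Write $p^{*}:=\max_{x}P_{X}(x)$ and let $x^{*}$ be a point attaining this atom.

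For the \emph{upper bound} $E_{\mathrm{KS}}(P_{X},Q_{Y})\leq\frac{1}{2}p^{*}$, I would exhibit an explicit simulator. The idea is to mimic inverse transform sampling (Prop.~\ref{prop:NUcontinuous}) but to ``spread out'' the atoms of $P_{X}$. Concretely, decompose $P_{X}=\sum_{x}P_{X}(x)\delta_{x}+\mu_{c}$ where $\mu_{c}$ is the continuous part. At each atom $x$ the CDF $F_{X}$ jumps by $P_{X}(x)$; define a randomized-looking but deterministic map by declaring $f(x)$, for $x$ an atom, to be chosen so that the mass $P_{X}(x)$ lands as symmetrically as possible around the ``correct'' quantile, and on the continuous part use $G_{Y}^{-1}\circ F_{X}$. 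Since $Q_{Y}$ is continuous, $G_{Y}$ is continuous, so an interval of $Q_{Y}$-mass $P_{X}(x)$ always exists; placing the point mass $P_{X}(x)$ at the \emph{midpoint} (in $Q_{Y}$-measure) of the corresponding quantile interval makes the resulting CDF $F_{Y}$ deviate from $G_{Y}$ by at most $\frac{1}{2}P_{X}(x)\leq\frac{1}{2}p^{*}$ at every point. I would formalize this by defining, for $t\in[0,1]$, the map $x\mapsto G_{Y}^{-1}(\tilde F_{X}(x))$ where $\tilde F_{X}$ is a modification of $F_{X}$ that, at each atom, uses the midpoint value $\frac{1}{2}(F_{X}(x^{-})+F_{X}(x))$, and then check $\sup_y|F_Y(y)-G_Y(y)|\le \tfrac12 p^*$ by a direct jump-by-jump estimate.

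For the \emph{lower bound} $E_{\mathrm{KS}}(P_{X},Q_{Y})\geq\frac{1}{2}p^{*}$, the point is that \emph{any} measurable $f:\mathbb{R}\to\mathbb{R}$ sends the atom at $x^{*}$ (of mass $p^{*}$) to a single point $y^{*}=f(x^{*})$, so $P_{Y}$ necessarily has an atom of mass at least $p^{*}$ at $y^{*}$. Now $G_{Y}$, being continuous, has $G_{Y}(y^{*-})=G_{Y}(y^{*})$, whereas $F_{Y}(y^{*})-F_{Y}(y^{*-})\ge p^{*}$. Evaluating the KS supremum at $y^{*}$ and at points just below $y^{*}$: if $F_Y(y^{*-})\ge G_Y(y^*)$ then $F_Y(y^*)-G_Y(y^*)\ge p^*$ gives... actually the clean argument is: $|F_Y(y^*)-G_Y(y^*)|+|F_Y(y^{*-})-G_Y(y^{*-})|\ge |F_Y(y^*)-F_Y(y^{*-})| - |G_Y(y^*)-G_Y(y^{*-})| = F_Y(y^*)-F_Y(y^{*-})\ge p^*$, using continuity of $G_Y$ at $y^*$, so the larger of the two terms is $\ge \frac12 p^*$, hence $|P_Y-Q_Y|_{\mathrm{KS}}\ge\frac12 p^*$. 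Taking the infimum over $f$ yields the bound.

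The main obstacle I anticipate is the upper-bound construction: one must handle the case of \emph{countably many} atoms simultaneously and verify that placing each atom at its $Q_{Y}$-midpoint keeps the \emph{uniform} (sup-norm) deviation of CDFs bounded by $\frac{1}{2}p^{*}$ rather than by $\frac12\sum_x P_X(x)$ — this requires a careful ordering/monotonicity argument showing the errors introduced by distinct atoms never accumulate in the same direction at a single point $y$ (the map remains monotone, so at any $y$ the discrepancy is controlled by a single atom straddling the level $G_Y(y)$). A secondary technical point is the degenerate sub-case where $Q_Y$ is continuous but not strictly increasing (flat stretches of $G_Y$), where one must argue that $G_Y^{-1}$ still produces the claimed bound; this is routine once one works with the generalized inverse from Prop.~\ref{prop:NUcontinuous}. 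Everything else — measurability of $f$, reduction to the i.i.d.\ statement — is bookkeeping.
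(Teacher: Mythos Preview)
Your proposal is correct and matches the paper's approach: the paper constructs exactly the simulator $x\mapsto G_Y^{-1}\bigl(F_X(x^-)+\tfrac12 P_X(x)\bigr)$ on atoms and $G_Y^{-1}\circ F_X$ elsewhere, and dismisses the converse as ``obvious.'' Your lower-bound triangle-inequality argument and your identification of the monotonicity reason why atom errors do not accumulate are both sound and in fact more explicit than what the paper writes.
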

\begin{proof}
Denote $A$ as the set of discontinuity points of $F_{X}$. Then for
each $x\in\mathbb{R}\backslash A$, map $x$ to $G_{Y}^{-1}(F_{X}(x))$.
For each $x\in A$, map $x$ to $G_{Y}^{-1}(\lim_{\tilde{x}\uparrow x}F_{X}(\tilde{x}))+\frac{1}{2}P_{X}(x))$.
For such mapping, we have $\left|P_{Y}-Q_{Y}\right|_{\mathrm{KS}}=\frac{1}{2}\max_{x}P_{X}(x)$.
Furthermore, the converse is obvious. 
\end{proof}
Applying Proposition \ref{prop:NUdiscontinuous} to the vector case,
we get the following corollary. 
\begin{cor}
\label{cor:NUdiscontinuous}Assume $P_{X}$ is discontinuous and $Q_{Y}$
is continuous. Then for the non-universal $(P_{X}^{n},Q_{Y})$-simulation
problem, $E_{\mathrm{KS}}(P_{X}^{n},Q_{Y})=\frac{1}{2}\left(\max_{x}P_{X}(x)\right)^{n}$. 
\end{cor}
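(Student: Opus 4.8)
\emph{Strategy.} The plan is to reduce the corollary to its one-dimensional counterpart, Proposition \ref{prop:NUdiscontinuous}, via two elementary observations. Since $P_X^n$ is a product measure, $P_X^n(\{x^n\})=\prod_{i=1}^n P_X(\{x_i\})$, which is positive exactly when every coordinate is an atom of $P_X$ and is maximized by putting each coordinate at a heaviest atom $a$ of $P_X$. (Such an $a$ exists: discontinuity of $P_X$ forces $p^{*}:=\max_x P_X(x)>0$, and $\{x:P_X(x)\ge p^{*}/2\}$ is finite, so the supremum of the atom masses is attained.) Hence $P_X^n$ is discontinuous and its heaviest atom, located at $(a,\dots,a)$, has mass $(p^{*})^n$.

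\emph{The two bounds.} For the converse, for any simulator $f:\mathbb{R}^n\to\mathbb{R}$ the point $y_0:=f(a,\dots,a)$ satisfies $P_Y(\{y_0\})\ge (p^{*})^n$, so $F_Y$ jumps by at least $(p^{*})^n$ at $y_0$ while the continuous CDF $G_Y$ is flat there; therefore $|P_Y-Q_Y|_{\mathrm{KS}}=\sup_y|F_Y(y)-G_Y(y)|\ge\tfrac12(p^{*})^n$, which is precisely the converse argument in Proposition \ref{prop:NUdiscontinuous} applied to the vector seed. For achievability, fix a Borel isomorphism $\phi:(\mathbb{R}^n,\mathcal{B}_{\mathbb{R}^n})\to(\mathbb{R},\mathcal{B}_{\mathbb{R}})$ (both are uncountable standard Borel spaces, so one exists) and set $\nu:=P_X^n\circ\phi^{-1}$, a discontinuous distribution on $\mathbb{R}$ whose heaviest atom has mass $(p^{*})^n$. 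Proposition \ref{prop:NUdiscontinuous} supplies $g:\mathbb{R}\to\mathbb{R}$ with $|\nu\circ g^{-1}-Q_Y|_{\mathrm{KS}}=\tfrac12(p^{*})^n$, and $f:=g\circ\phi$ is then a simulator with $P_X^n\circ f^{-1}=\nu\circ g^{-1}$; together with the converse this yields $E_{\mathrm{KS}}(P_X^n,Q_Y)=\tfrac12\bigl(\max_x P_X(x)\bigr)^n$.

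\emph{Where the difficulty sits.} The only genuinely nontrivial point is the passage from a real-valued seed to an $\mathbb{R}^n$-valued seed: the achievability construction underlying Proposition \ref{prop:NUdiscontinuous} (inverse-transform sampling on continuity points, symmetric $\tfrac12 P_X(x)$-splitting at atoms) is phrased through the order structure of $\mathbb{R}$ and has no literal analogue on $\mathbb{R}^n$. Routing the vector problem through the Borel isomorphism $\phi$ bypasses this entirely, at the harmless cost of checking that $\phi$ and $\phi^{-1}$ are both measurable, so that $\nu$ is a bona fide probability measure on $\mathbb{R}$ and $f=g\circ\phi$ is measurable. Everything else is the elementary atom count above plus a direct quotation of Proposition \ref{prop:NUdiscontinuous}.
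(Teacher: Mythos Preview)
Your proof is correct and follows essentially the same approach as the paper, which simply states ``Applying Proposition~\ref{prop:NUdiscontinuous} to the vector case'' without further detail. The only content beyond Proposition~\ref{prop:NUdiscontinuous} is the observation $\max_{x^{n}}P_{X}^{n}(x^{n})=\bigl(\max_{x}P_{X}(x)\bigr)^{n}$, and you supply this together with an explicit mechanism---the Borel isomorphism $\phi:\mathbb{R}^{n}\to\mathbb{R}$---for transporting the $\mathbb{R}^{n}$-valued seed to a real-valued one so that Proposition~\ref{prop:NUdiscontinuous} applies literally; the paper leaves this passage implicit.
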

The result above shows that if the seed and target distributions are
respectively discontinuous and continuous, then the optimal approximation
error vanishes exponentially fast. We next show that if the seed and
target distributions are both discrete, the optimal approximation
error vanishes faster. The proof of Proposition \ref{prop:NUdiscrete}
is provided in Appendix \ref{sec:Proof-of-Theorem-discrete}. 
\begin{prop}
\label{prop:NUdiscrete} Assume both $P_{X}$ and $Q_{Y}$ are discrete
with finite alphabets $\mathcal{X}$ and $\mathcal{Y}$ respectively.
Then for the non-universal $(P_{X}^{n},Q_{Y})$-simulation problem,
$E_{\mathrm{KS}}(P_{X}^{n},Q_{Y})\dotle\left(\min_{x}P_{X}(x)\right)^{n}$. 
\end{prop}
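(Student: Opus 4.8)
The plan is to reduce the claim to an elementary ``change‑making'' fact about the atom probabilities of $P_X^n$. Write $\mathcal{Y}=\{y_1<\dots<y_m\}$ for the alphabet of $Q_Y$, put $q_i:=Q_Y(y_i)$ and $t_j:=\sum_{i\le j}q_i$ (so $t_0=0$, $t_m=1$). It suffices to exhibit, for each large $n$, a $\mathcal{Y}$-valued simulator $f_n$ with small error; for such an $f_n$ the CDF of $P_{Y_n}$ agrees with that of $Q_Y$ off $y_1,\dots,y_m$, so $\left|P_{Y_n}-Q_Y\right|_{\mathrm{KS}}=\max_{1\le j\le m-1}\left|P_{X^n}(B_j)-t_j\right|$ with $B_j:=f_n^{-1}(\{y_1,\dots,y_j\})$. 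Since $B_1\subseteq\dots\subseteq B_{m-1}$ automatically for any $f_n$, the task becomes: build a nested family $B_1\subseteq\dots\subseteq B_{m-1}\subseteq\mathcal{X}^n$ with $\max_j\left|P_{X^n}(B_j)-t_j\right|\dotle(\min_xP_X(x))^n$.

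The core step I would establish is a density statement for subset sums: with $p^*:=\min_xP_X(x)$ and $\kappa:=\max_xP_X(x)/p^*$, for all large $n$ the set $\{P_{X^n}(B):B\subseteq\mathcal{X}^n\}$ is $\kappa(p^*)^n$-dense in $[0,1]$. I would list the distinct ``denominations'' $\prod_xP_X(x)^{n_x}$, $\sum_xn_x=n$, increasingly as $d_1<\dots<d_K$ with multiplicities $\mu_1,\dots,\mu_K$, and prove by induction on $k$ that $S_k:=\{\sum_{i\le k}m_id_i:0\le m_i\le\mu_i\}$ is $\kappa(p^*)^n$-dense in $[0,M_k]$, $M_k:=\sum_{i\le k}\mu_id_i$. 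Two elementary facts feed the step. First, $d_{k+1}\le\kappa d_k$: a non-maximal denomination has a coordinate of value $x_0$ with $P_X(x_0)<\max_xP_X(x)$, and changing it to a symbol in $\argmax_xP_X(x)$ multiplies the denomination by a factor in $(1,\kappa]$, so the next denomination up is within a factor $\kappa$. Second, $M_k\ge d_{k+1}$ for every $k\ge2$: if the type realizing $d_k$ is not a simplex vertex, its multinomial multiplicity is at least $n$, so $M_k\ge nd_k\ge d_{k+1}$ once $n\ge\kappa$; and the only way $\mu_k$ can be small is $d_k=P_X(y)^n$ for a non-minimizer $y$, in which case every string over the two letters $\{x^*,y\}$ has denomination $\le P_X(y)^n$, so $M_k\ge(p^*+P_X(y))^n\ge\kappa d_k\ge d_{k+1}$ for $n$ large. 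Given $d_{k+1}\le M_k$, the translated copies $S_k+r\,d_{k+1}$, $0\le r\le\mu_{k+1}$, composing $S_{k+1}$ overlap, so the density is preserved; the only gap ever introduced is at $k=1$, of size at most $(\kappa-1)(p^*)^n$. Hence $S_K$ is $\kappa(p^*)^n$-dense in $[0,1]$.

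To finish, I would construct the nested family by handling $j=1,\dots,m-1$ in turn: given $B_{j-1}$ with $P_{X^n}(B_{j-1})$ near $t_{j-1}$, first greedily add atoms of $\mathcal{X}^n\setminus B_{j-1}$ of non-negligible probability so as to approach $t_j$ from below, then apply the density statement to the remaining small atoms of $\mathcal{X}^n\setminus B_{j-1}$ to close the residual gap to within $\kappa^{O(1)}(p^*)^n$; the bookkeeping point is that each step consumes only $\poly(n)$ small atoms out of an exponentially larger pool and at most $O(1)$ of the ``vertex'' atoms, so the two facts above survive into the next step with $\kappa$ replaced by a bounded power of $\kappa$. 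Since $m$ is fixed, the accumulated error stays $\kappa^{O(m)}(p^*)^n\doteq(p^*)^n$. The hard part is the density statement, and within it the exclusion of a gap in the achievable subset sums at scale $(p^*)^n$: a light denomination level is realized either by at least $n$ strings or by a single constant string, and only the latter is dangerous, where the two-letter estimate above already supplies enough lighter mass. Propagating the density property through the sequential nested construction without exhausting the reservoir of near-minimal atoms is a secondary but genuine technicality.
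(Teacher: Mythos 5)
Your proposal takes a genuinely different route from the paper's. The paper sorts the atoms of $P_X^n$ in decreasing order of probability and greedily assigns each atom to the currently most-underfilled value $y\in\mathcal{Y}$; a one-line averaging argument (the largest residual is at least the mean residual $\frac{1-\sum_{i\le j}P_X^n(x_i^n)}{|\mathcal{Y}|}$) shows that no bucket ever overfills until at most $|\mathcal{Y}|+1$ atoms remain, and the leftover atoms contribute error $\le\frac{1}{2}P_X(x_{|\mathcal{X}|-1})(P_X(x_{|\mathcal{X}|}))^{n-1}\doteq(\min_xP_X(x))^n$. Your route instead isolates a subset-sum density lemma — that $\{P_{X^n}(B):B\subseteq\mathcal{X}^n\}$ is $\kappa(p^*)^n$-dense in $[0,1]$ — by an induction over denomination levels, and then tries to produce a nested family $B_1\subseteq\dots\subseteq B_{m-1}$ hitting the targets $t_j$. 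Your two supporting facts for the lemma (adjacent denominations differ by at most a factor $\kappa$ via a single-coordinate swap; and, for $k\ge2$, the cumulative mass of denominations $\le d_k$ dominates $d_{k+1}$, by the $\ge n$ multiplicity of non-vertex types or by the two-letter mass estimate $(p^*+P_X(y))^n$ for constant-sequence types) are both correct, and the density lemma is a clean structural result in its own right that makes the $(\min_xP_X(x))^n$ rate transparent.

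The finishing step, however, has a real gap that you yourself flag rather than close. The density lemma as stated applies to the full atom collection $\mathcal{X}^n$, whereas the sequential nested construction needs density of subset sums of the residual collection $\mathcal{X}^n\setminus B_{j-1}$, which depends on earlier choices. Re-establishing the two invariants for the residual is not automatic: a single vertex atom consumed earlier wipes out an entire denomination level $d_k$, and non-vertex levels can have multiplicity exactly $n$, so "only polynomially many small atoms are consumed" does not by itself guarantee $\mu_k\ge n$ survives for every level you still rely on. I believe a careful version works — e.g., proving the lemma for any sub-collection that omits at most a constant fraction of each denomination level, and budgeting consumption accordingly, or reserving a designated reservoir of small atoms untouched by the greedy phase — but as written this is a plan, not an argument. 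It is worth noting that the paper's greedy construction sidesteps both issues at once: it produces nested preimages for free (assigning atoms one at a time), and it never needs to reason about the residual atom pool beyond the averaging bound.
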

\begin{rem}
More specifically, we can prove that for $n\geq|\mathcal{Y}|\max_{1\le i\leq|\mathcal{X}|-1}\frac{P_{X}(x_{i})}{P_{X}(x_{i+1})}$,
\begin{align*}
E_{\mathrm{KS}}(P_{X}^{n},Q_{Y}) & \leq\frac{1}{2}P_{X}(x_{|\mathcal{X}|-1})\left(P_{X}(x_{|\mathcal{X}|})\right)^{n-1},
\end{align*}
where $x_{1},x_{2},...,x_{|\mathcal{X}|}$ is a resulting sequence
after sorting the elements in $\mathcal{X}$ such that $P_{X}(x_{1})\geq P_{X}(x_{2})\geq...\geq P_{X}(x_{|\mathcal{X}|})$. 
\end{rem}

\section{Universal Simulation from a Stationary Memoryless Process }

In this section, we consider universal simulation of a real-valued
random variable. For universal simulation, we divide the seed distributions
into three kinds: absolutely continuous, discontinuous, as well as
continuous but not absolutely continuous distributions.

\subsection{Absolutely continuous seed distributions}

We first consider absolutely continuous seed distributions, and show
an impossibility result for this case. 
\begin{prop}
\label{prop:Uconverse} For non-degenerate distribution $Q_{Y}$,
there is no simulator (measurable function) $Y=f(X):\left(\mathbb{R},\mathcal{B}_{\mathbb{R}}\right)\to\left(\mathbb{R},\mathcal{B}_{\mathbb{R}}\right)$
such that $P_{Y}=Q_{Y}$ for any absolutely continuous $P_{X}$. 
\end{prop}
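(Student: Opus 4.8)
The plan is to argue by contradiction: suppose there were a single measurable $f:\mathbb{R}\to\mathbb{R}$ with $P_{f(X)}=Q_Y$ for every absolutely continuous $P_X$. Since $Q_Y$ is non-degenerate, there is a point $a\in\mathbb{R}$ with $0<G_Y(a)<1$, where $G_Y$ is the CDF of $Q_Y$; write $q:=G_Y(a)\in(0,1)$. The requirement $P_{f(X)}=Q_Y$ then forces $P_X(f^{-1}((-\infty,a]))=q$ for \emph{every} absolutely continuous $P_X$. Set $B:=f^{-1}((-\infty,a])\in\mathcal{B}_{\mathbb{R}}$, a fixed Borel set; the claim becomes: $P_X(B)=q$ for all absolutely continuous $P_X$, and simultaneously (applying the same reasoning to a second threshold $b>a$ with $G_Y(b)\ne q$, which exists by non-degeneracy) $P_X(B')=q'\ne q$ for $B':=f^{-1}((-\infty,b])\supseteq B$.

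The core of the argument is to show that no fixed Borel set $B$ can have a constant measure across all absolutely continuous distributions unless that constant is $0$ or $1$. First I would observe that by choosing $P_X$ to be the uniform distribution on a small interval $I$, we get $\mathrm{Leb}(B\cap I)/\mathrm{Leb}(I)=q$ for every bounded interval $I$ with positive length. Letting $I$ shrink to a Lebesgue density point of $B$ gives $q=1$, while letting $I$ shrink to a density point of the complement $B^c$ gives $q=0$ — and by the Lebesgue density theorem, if $B$ has positive Lebesgue measure it has density points, and if $B^c$ has positive measure it has density points; since they cannot both be null, we reach $q\in\{0,1\}$, contradicting $q\in(0,1)$. (Equivalently, one can note $\mathrm{Leb}(B\cap I)=q\,\mathrm{Leb}(I)$ for all intervals extends by a $\pi$-$\lambda$ / Carathéodory argument to $\mathrm{Leb}(B\cap E)=q\,\mathrm{Leb}(E)$ for all Borel $E$, which with $E=B$ forces $q\in\{0,1\}$.)

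The main obstacle — really the only subtle point — is making the density-point step airtight: one must invoke the Lebesgue density theorem correctly and handle the dichotomy "either $B$ or $B^c$ is Lebesgue-null" cleanly, since a priori $B$ could be something pathological like a fat Cantor-type set. Using the $\pi$-$\lambda$ route avoids density points entirely and may be the cleanest write-up: the collection of Borel $E$ with $\mathrm{Leb}(B\cap E)=q\,\mathrm{Leb}(E)$ on every bounded piece is a $\lambda$-system containing the $\pi$-system of bounded intervals, hence all bounded Borel sets, hence taking $E=B$ (intersected with large intervals and passing to the limit) yields $\mathrm{Leb}(B)=q\,\mathrm{Leb}(B)$, so $\mathrm{Leb}(B)\in\{0,\infty\}$; symmetric reasoning with $1-q$ handles $B^c$, and the two together are incompatible with $q\in(0,1)$. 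Either way the contradiction is reached, proving no such $f$ exists.
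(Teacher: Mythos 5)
Your proof is correct, but it takes a genuinely different route from the paper's. The paper argues by a clean global dichotomy on $f$: either (Case~1) some Borel $A$ has both $f^{-1}(A)$ and its complement of positive Lebesgue measure, in which case two absolutely continuous $P_X$'s can be chosen to assign $f^{-1}(A)$ different probabilities, so they cannot both map to $Q_Y$; or (Case~2) every preimage $f^{-1}(A)$ is Lebesgue-null or co-null, whence $P_Y(A)\in\{0,1\}$ for all $A$ under any absolutely continuous $P_X$, contradicting non-degeneracy. That argument is short and needs no machinery beyond choosing two measures. You instead fix one threshold $a$ with $q:=G_Y(a)\in(0,1)$, set $B:=f^{-1}((-\infty,a])$, test against uniform laws on bounded intervals to obtain $\mathrm{Leb}(B\cap I)=q\,\mathrm{Leb}(I)$, and then force $q\in\{0,1\}$ via the Lebesgue density theorem or a $\pi$--$\lambda$ argument. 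What your route buys is sharpness: only the very small family of uniform distributions on intervals is needed to rule such an $f$ out. What it costs is the heavier density/Dynkin machinery and a mild untidiness in the $\pi$--$\lambda$ write-up as stated (it should be run relative to a fixed large interval $[-N,N]$, which is a genuine $\lambda$-system, yielding $\mathrm{Leb}(B\cap[-N,N])=0$, then $N\to\infty$; the version with ``all bounded Borel sets'' is not literally a $\lambda$-system since it lacks a top element). One small remark: the second threshold $b$ and the set $B'$ you introduce are never used; the contradiction already follows from $B$ alone.
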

\begin{proof}
Suppose that there exists a measurable function $f:\left(\mathbb{R},\mathcal{B}_{\mathbb{R}}\right)\to\left(\mathbb{R},\mathcal{B}_{\mathbb{R}}\right)$
such that $P_{Y}=Q_{Y}$ for any absolutely continuous $P_{X}$.

Case 1: Suppose that there exists a set $A\in\mathcal{B}_{\mathbb{R}}$
such that both $f^{-1}(A)$ and $\mathbb{R}\backslash f^{-1}(A)$
have positive Lebesgue measures. Then $P_{Y}(A)=P_{X}(f^{-1}(A))$.
For two absolutely continuous measures $P_{X}$ and $\widetilde{P}_{X}$
such that $P_{X}(f^{-1}(A))\neq\widetilde{P}_{X}(f^{-1}(A))$, then
$P_{Y}(A)\neq\widetilde{P}_{Y}(A)$, where $\widetilde{P}_{Y}$ is
the distribution induced by $\widetilde{P}_{X}$ through the mapping
$f$. This implies that $P_{Y}(A)\neq Q_{Y}(A)$ or $\widetilde{P}_{Y}(A)\neq Q_{Y}(A)$.
This contradicts with the assumption that $P_{Y}=Q_{Y}$  for any
absolutely continuous $P_{X}$.

Case 2: Suppose that either $f^{-1}(A)$ or $\mathbb{R}\backslash f^{-1}(A)$
has zero Lebesgue measure for all $A\in\mathcal{B}_{\mathbb{R}}$.
Then for any absolutely continuous $P_{X}$, we have $P_{Y}(A)=P_{X}(f^{-1}(A))=0$
or $P_{Y}(\mathbb{R}\backslash A)=1-P_{Y}(A)=1-P_{X}(f^{-1}(A))=P_{X}(f^{-1}(\mathbb{R}\backslash A))=0$
for all $A\in\mathcal{B}_{\mathbb{R}}$. That is, $P_{Y}(A)=0$ or
$1$ for all $A\in\mathcal{B}_{\mathbb{R}}$. However for any non-degenerate
measure $Q_{Y}$, there exists an $A\in\mathcal{B}_{\mathbb{R}}$
such that $0<Q_{Y}(A)<1$. This contradicts with the assumption that
$P_{Y}=Q_{Y}$.

Combining the two cases above, we have Proposition \ref{prop:Uconverse}.
\end{proof}
The theorem above implies that for any simulator $f:\mathbb{R}\to\mathbb{R}$,
we always have $\left|P_{Y}-Q_{Y}\right|_{\mathrm{TV}}>0$ for some
absolutely continuous $P_{X}$. However, we can prove that there exists
a sequence of simulators that make the TV-approximation error $\left|P_{Y}-Q_{Y}\right|_{\mathrm{TV}}$
arbitrarily close to zero for any absolutely continuous $P_{X}$. 
\begin{thm}
\label{thm:Ucontinuous} Assume $\mathcal{P}_{X}=\mathcal{P}_{\mathrm{ac}}$
and $Q_{Y}$ is arbitrary. Then for the universal $(\mathcal{P}_{X},Q_{Y})$-simulation
problem, $\mathcal{E}_{\theta}(\mathcal{P}_{X},Q_{Y})\asymp0$ for
$\theta\in\left\{ \mathrm{KS},\mathrm{TV}\right\} $. 
\end{thm}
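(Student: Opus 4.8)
Since $\left|P-Q\right|_{\mathrm{KS}}\le\left|P-Q\right|_{\mathrm{TV}}$ always holds, it suffices to treat $\theta=\mathrm{TV}$; and because the TV distance is nonnegative, every achievable function is pointwise $\ge 0$, so proving $\mathcal{E}_{\mathrm{TV}}(\mathcal{P}_{X},Q_{Y})\asymp0$ amounts to showing that the constant function $0$ is TV-achievable, i.e.\ that there is a \emph{single} sequence of simulators $\{f_{k}\}$, not depending on $P_{X}$, with $\left|P_{f_{k}(X)}-Q_{Y}\right|_{\mathrm{TV}}\to0$ for every $P_{X}\in\mathcal{P}_{\mathrm{ac}}$.

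The plan is to reduce first to simulating $\mathrm{Unif}[0,1)$, and then to hit that target universally by a ``squeezed'' periodic map. Let $G_{Y}^{-1}$ be the quantile function of $Q_{Y}$ from Proposition~\ref{prop:NUcontinuous}; it is non-decreasing, hence Borel, and $G_{Y}^{-1}(U)\sim Q_{Y}$ whenever $U\sim\mathrm{Unif}[0,1)$. Since pushing forward by a fixed measurable map is a contraction for the TV distance (the same data-processing estimate used in Lemma~\ref{lem:continuousisharder}), it is enough to produce maps $g_{k}\colon\mathbb{R}\to[0,1)$ with $P_{X}\circ g_{k}^{-1}\to\mathrm{Unif}[0,1)$ in TV for every absolutely continuous $P_{X}$; then $f_{k}:=G_{Y}^{-1}\circ g_{k}$ does the job. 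I would take $g_{k}(x):=\{kx\}=kx-\lfloor kx\rfloor$, the fractional-part map, whose period $1/k$ shrinks to $0$ --- this is the ``squeezing periodic functions'' mechanism.

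The technical core is then a Riemann--Lebesgue-type lemma: for any probability density $p$ on $\mathbb{R}$, the density of $\{kX\}$ on $[0,1)$, namely $q_{k}(u)=\tfrac{1}{k}\sum_{m\in\mathbb{Z}}p\!\left(\tfrac{u+m}{k}\right)$, converges to the constant $1$ in $L^{1}[0,1)$. I would prove this in three steps: (i) the linear map $p\mapsto q_{k}$ is a contraction from $L^{1}(\mathbb{R})$ into $L^{1}[0,1)$, by the triangle inequality and Tonelli; (ii) for $p=\mathbf{1}_{[a,b]}$ one computes $q_{k}(u)=\tfrac{1}{k}\,\#\{m\in\mathbb{Z}:ka-u\le m\le kb-u\}=(b-a)+O(1/k)$ uniformly in $u$, since an interval of length $k(b-a)$ contains $k(b-a)+O(1)$ integers, so $\|q_{k}-(b-a)\|_{L^{1}[0,1)}\to0$; (iii) approximate a general $p$ by step functions in $L^{1}(\mathbb{R})$ and combine (i) and (ii). This yields $\|q_{k}-1\|_{L^{1}[0,1)}\to0$, i.e.\ $\left|P_{X}\circ g_{k}^{-1}-\mathrm{Unif}[0,1)\right|_{\mathrm{TV}}=\tfrac{1}{2}\|q_{k}-1\|_{L^{1}[0,1)}\to0$ for each fixed $P_X$, which is exactly what TV-achievability of $0$ requires.

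The main obstacle I anticipate is precisely this $L^{1}$-convergence lemma: it is tempting to attack it via characteristic functions (writing $\mathbb{E}[e^{2\pi i n\{kX\}}]=\varphi_{X}(2\pi nk)\to0$ by Riemann--Lebesgue), but that only gives \emph{weak} convergence of $q_{k}$, which is too weak for the TV bound; one really needs to route the step-function approximation through the contraction in step (i). Everything else --- Borel-measurability of $f_{k}$ (a composition of Borel maps), the TV data-processing inequality, and the bookkeeping that turns ``$0$ is achievable'' into $\mathcal{E}_{\theta}(\mathcal{P}_{X},Q_{Y})\asymp0$ --- is routine.
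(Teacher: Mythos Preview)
Your proposal is correct and uses the same simulator as the paper: the fractional-part map $x\mapsto\{x/\Delta\}$ (your $g_k$ with $\Delta=1/k$) followed by $G_Y^{-1}$. The organization is also parallel: reduce to a uniform target via the quantile map and TV data processing, then show the squeezed periodic map sends any absolutely continuous law to uniform in TV.

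Where you diverge from the paper is in the proof of the core $L^1$ convergence. The paper argues on the \emph{input} side: it observes that the step-function density $\widehat{p}(x)=\sum_i \bigl(\tfrac{1}{\Delta}\int_{i\Delta}^{(i+1)\Delta}p\bigr)\mathbf{1}_{(i\Delta,(i+1)\Delta]}(x)$ is mapped \emph{exactly} to $Q_Y$, so by data processing $|P_Y-Q_Y|_{\mathrm{TV}}\le |P_X-\widehat P_X|_{\mathrm{TV}}=\tfrac12\|p-\widehat p\|_{L^1}$, and then invokes Lebesgue's differentiation theorem plus dominated convergence to get $\|p-\widehat p\|_{L^1}\to0$. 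You instead work on the \emph{output} side, proving $\|q_k-1\|_{L^1[0,1)}\to0$ directly via the three-step contraction/indicator/approximation argument. Your route is a bit more elementary (no Lebesgue differentiation theorem needed), while the paper's route immediately yields the quantitative bound $|P_Y-Q_Y|_{\mathrm{TV}}\le\|p-\widehat p\|_{L^1}$ that they later exploit for convergence rates (their Proposition~3.1). Your side remark that Fourier methods give only weak convergence and are insufficient here is apt.
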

Given Proposition \ref{prop:Uconverse}, I think this result is rather
surprising and counter-intuitive. If $f$ is a differentiable bijective
function, then the input distribution is determined by $f$ and the
output distribution, since $p_{X}(x)=p_{Y}(f(x))f'(x)$, where $p_{X}$
and $p_{Y}$ are respectively the PDFs (probability density functions,
i.e., Radon\textendash Nikodym derivatives respect to the Lebesgue
measure) of $P_{X}$ and $P_{Y}$. Hence given $f$ and the output
distribution, the input distribution is unique. However, in our case,
we consider a sequence of non-bijective mappings $f_{n}$. Hence given
$f_{n}$ and the output distribution, the input distribution is not
unique. The essence of our proof of this theorem is that any PDF $p_{X}$
can be approximated within any level of approximation error by a sequence
of step functions, and on the other hand, such step functions can
be used to generate any distribution in a universal way. Hence $P_{X}$
can be used to simulate any distribution within any level of approximation
error. 
\begin{proof}
We first restrict our attention to the case that $Q_{Y}$ is absolutely
continuous. Let $p_{X}$, $p_{Y}$, and $q_{Y}$ be the PDFs of $P_{X}$,
$P_{Y}$, and $Q_{Y}$ respectively.

\textbf{Universal Mapping: }Partition the real line into intervals
with the same length $\Delta$, i.e., $\bigcup_{i=-\infty}^{\infty}(i\Delta,(i+1)\Delta]$.
We first simulate a uniform distribution on $[a,b]$ by mapping each
interval $(i\Delta,(i+1)\Delta]$ into $[a,b]$ using the linear function
$x\mapsto a+\frac{b-a}{\Delta}(x-i\Delta)$. We then transform the
output distribution to the target distribution $Q_{Y}$, by using
function $x\mapsto G_{Y}^{-1}\left(\frac{x-a}{b-a}\right)$, where
$G_{Y}^{-1}\left(t\right):=\min\left\{ y:G_{Y}(y)\geq t\right\} $.
Therefore, each $x\in(i\Delta,(i+1)\Delta]$ is mapped to $G_{Y}^{-1}\left(\frac{1}{\Delta}(x-i\Delta)\right)$.
Hence the final mapping is 
\[
f_{\Delta}(x):=\sum_{i=-\infty}^{\infty}G_{Y}^{-1}\left(\frac{1}{\Delta}(x-i\Delta)\right)1\left\{ x\in(i\Delta,(i+1)\Delta]\right\} ,
\]
which is shown in Fig. \ref{fig:Illustration-of-the}. Furthermore,
some properties on squeezing such a periodic function are provided
in Appendix \ref{sec:Proof-of-Theorem-Squeeze}.

\begin{figure}
\centering\begin{tikzpicture}[domain=-4:4]
\draw[thin,color=gray] (-0.1,-1.1) ;
\draw[->] (-4,0) -- (4,0) node[right] {$x$};
\draw[->] (0,-3) -- (0,3) node[above] {$f(x)$};
\draw[loosely dotted] (-3,-3) -- (-3,3);
\draw[loosely dotted] (-2,-3) -- (-2,3);
\draw[loosely dotted] (-1,-3) -- (-1,3);
\draw[loosely dotted] (1,-3) -- (1,3);
\draw[loosely dotted] (3,-3) -- (3,3);
\draw[loosely dotted] (2,-3) -- (2,3);
\draw[scale=1,domain=-3.999:-3.001,smooth,variable=\x,blue] plot ({\x},{-0.4*ln(1/(\x+4)-1))});
\draw[scale=1,domain=-2.999:-2.001,smooth,variable=\x,blue] plot ({\x},{-0.4*ln(1/(\x+3)-1))});
\draw[scale=1,domain=-1.999:-1.001,smooth,variable=\x,blue] plot ({\x},{-0.4*ln(1/(\x+2)-1))});
\draw[scale=1,domain=-0.999:-0.001,smooth,variable=\x,blue] plot ({\x},{-0.4*ln(1/(\x+1)-1))});
\draw[scale=1,domain=0.001:0.999,smooth,variable=\x,blue] plot ({\x},{-0.4*ln(1/(\x)-1))});
\draw[scale=1,domain=1.001:1.999,smooth,variable=\x,blue] plot ({\x},{-0.4*ln(1/(\x-1)-1))});
\draw[scale=1,domain=2.001:2.999,smooth,variable=\x,blue] plot ({\x},{-0.4*ln(1/(\x-2)-1))});
\draw[scale=1,domain=3.001:3.999,smooth,variable=\x,blue] plot ({\x},{-0.4*ln(1/(\x-3)-1))});
 \node [below=0cm, align=flush center,text width=8cm] at (-3.8,0)         {            $-4\Delta$         };
 \node [below=0cm, align=flush center,text width=8cm] at (-2.8,0)         {            $-3\Delta$         };
 \node [below=0cm, align=flush center,text width=8cm] at (-1.8,0)         {            $-2\Delta$         };
 \node [below=0cm, align=flush center,text width=8cm] at (-0.8,0)         {            $-\Delta$         };
 \node [below=0cm, align=flush center,text width=8cm] at (0.2,0)         {            $0$         };
 \node [below=0cm, align=flush center,text width=8cm] at (1.2,0)         {            $\Delta$         };
 \node [below=0cm, align=flush center,text width=8cm] at (2.2,0)         {            $2\Delta$         };
 \node [below=0cm, align=flush center,text width=8cm] at (3.2,0)         {            $3\Delta$         };
 \node [below=0cm, align=flush center,text width=8cm] at (4.2,0)         {            $4\Delta$         };
\end{tikzpicture}\caption{\label{fig:Illustration-of-the}Illustration of the universal mapping.}
\end{figure}
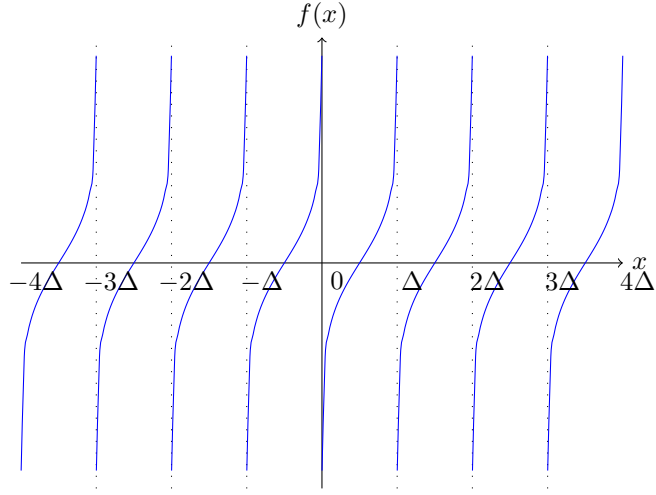

The PDF of the output of this mapping (respect to the input distribution
$P_{X}$) is denoted as $P_{Y}$. However, if the input distribution
is $\widehat{P}_{X}$ with PDF $\widehat{p}(x):=\sum_{i=-\infty}^{\infty}\frac{1}{\Delta}\int_{i\Delta}^{(i+1)\Delta}p(x)dx\cdot1\left\{ x\in(i\Delta,(i+1)\Delta]\right\} $,
then the output distribution becomes the one with CDF $\widehat{G}_{Y}(y)$
satisfying 
\begin{align}
\widehat{G}_{Y}(y) & =\int_{\left\{ x:f_{\Delta}(x)\le y\right\} }\sum_{i=-\infty}^{\infty}\frac{1}{\Delta}\int_{i\Delta}^{(i+1)\Delta}p(x)dx\cdot1\left\{ x\in(i\Delta,(i+1)\Delta]\right\} dx\nonumber \\
 & =\sum_{i=-\infty}^{\infty}\int_{i\Delta}^{i\Delta+\Delta G_{Y}(y)}dx\frac{1}{\Delta}\int_{i\Delta}^{(i+1)\Delta}p(x)dx\label{eq:-3}\\
 & =\sum_{i=-\infty}^{\infty}\Delta G_{Y}(y)\frac{1}{\Delta}\int_{i\Delta}^{(i+1)\Delta}p(x)dx\nonumber \\
 & =G_{Y}(y),\nonumber 
\end{align}
where in \eqref{eq:-3}, swapping the integral and the sum follows
from Fubini's theorem. Hence the output distribution induced by inputting
$\widehat{P}_{X}$ results in $Q_{Y}$ exactly. 

Based on the above observations, we have 
\begin{align}
\left|P_{Y}-Q_{Y}\right|_{\mathrm{TV}} & \leq\left|P_{X}-\widehat{P}_{X}\right|_{\mathrm{TV}}\label{eq:-1-2}\\
 & =\int_{-\infty}^{\infty}\left|p(x)-\widehat{p}(x)\right|dx\nonumber \\
 & =2\int_{-\infty}^{\infty}\left[p(x)-\widehat{p}(x)\right]^{+}dx,\label{eq:-6}
\end{align}
where $\left[z\right]^{+}=\max\{z,0\}$, and \eqref{eq:-1-2} follows
from the data processing inequality on the total variable distance,
i.e., $\left|P_{Y}-Q_{Y}\right|_{\mathrm{TV}}\leq\left|P_{X}-Q_{X}\right|_{\mathrm{TV}}$
with $P_{Y}\left(\cdot\right):=\int P_{Y|X}(\cdot|x)dP_{X}(x)$ and
$Q_{Y}\left(\cdot\right):=\int P_{Y|X}(\cdot|x)dQ_{X}(x)$.

Observe that $\left[p(x)-\widehat{p}(x)\right]^{+}\leq p(x)$ and
$p(x)$ is integrable, and moreover,
\begin{align}
\lim_{\Delta\to0}\widehat{p}(x) & =\lim_{\Delta\to0}\sum_{i=-\infty}^{\infty}\frac{1}{\Delta}\int_{i\Delta}^{(i+1)\Delta}p(x')dx'1\left\{ x\in(i\Delta,(i+1)\Delta]\right\} \nonumber \\
 & =\lim_{\Delta\to0}\frac{1}{\Delta}\int_{\left\lfloor \frac{x}{\Delta}\right\rfloor \Delta}^{\left(\left\lfloor \frac{x}{\Delta}\right\rfloor +1\right)\Delta}p(x')dx'\nonumber \\
 & =p(x)\;\mathrm{a.e.},\label{eq:-9-2}
\end{align}
where \eqref{eq:-9-2} follows by Lebesgue's differentiation theorem
\cite[Thm. 7.7]{rudin2006real}. Hence by Lebesgue's dominated convergence
theorem \cite[Thm. 1.34]{rudin2006real}, 
\begin{align}
\lim_{\Delta\to0}\int_{-\infty}^{\infty}\left[p(x)-\widehat{p}(x)\right]^{+}dx & =\int_{-\infty}^{\infty}\lim_{\Delta\to0}\left[p(x)-\widehat{p}(x)\right]^{+}dx\nonumber \\
 & =0.\label{eq:-7}
\end{align}

Therefore, combining \eqref{eq:-6} with \eqref{eq:-7} yields
\[
\lim_{\Delta\to0}\left|P_{Y}-Q_{Y}\right|_{\mathrm{TV}}=0.
\]
That is, $\mathcal{E}_{\mathrm{TV}}(\mathcal{P}_{X},Q_{Y})\asymp0.$

If $Q_{Y}$ is not absolutely continuous, we can first simulate an
absolutely continuous random variable $Z$ with distribution $Q_{Z}$
and then use it to simulate $Y\sim Q_{Y}$. As stated in Lemma \ref{lem:continuousisharder},
this will result in a smaller TV-approximation error for $(P_{X},Q_{Y})$-simulation
problem than that for $(P_{X},Q_{Z})$-simulation problem. Hence the
TV-approximation error for this case also approaches to zero as $\Delta\to0$.
\end{proof}
For the universal mapping proposed in the proof of Theorem \ref{thm:Ucontinuous},
the induced approximation error $\left|P_{Y}-Q_{Y}\right|_{\mathrm{TV}}$
depends on the interval length $\Delta$, and converges to zero as
$\Delta\to0$. We next investigate how fast the approximation error
converges to zero as $\Delta\to0$. 
\begin{prop}[Convergence Rate as $\Delta\to0$]
\label{prop:Ucontinuous--1} Assume $P_{X}$ is an absolutely continuous
distribution with an a.e. (almost everywhere) continuously differentiable
PDF $p_{X}$ such that $\left|p_{X}'(x)\right|$ is bounded, and $Q_{Y}$
is an arbitrary distribution. Then the TV-approximation error induced
by the universal mapping $x\mapsto\sum_{i=-\infty}^{\infty}G_{Y}^{-1}\left(\frac{1}{\Delta}(x-i\Delta)\right)1\left\{ x\in(i\Delta,(i+1)\Delta]\right\} $
satisfies $\limsup_{\Delta\to0}\frac{1}{\Delta}\left|P_{Y}-Q_{Y}\right|_{\mathrm{TV}}\leq\int\left|p_{X}'(x)\right|dx$,
where $P_{\Delta}$ denotes the distribution of the output $Y$. 
\end{prop}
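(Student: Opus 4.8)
The plan is to reduce the statement to a purely analytic $L^1$ estimate on how well $p_X$ is approximated by its cell averages, and then quantify the rate. Recall from the proof of Theorem~\ref{thm:Ucontinuous} that, for the universal mapping $f_\Delta$ with cell length $\Delta$, one has
\[
\left|P_Y-Q_Y\right|_{\mathrm{TV}}\le\int_{-\infty}^{\infty}\left|p_X(x)-\widehat p_X(x)\right|\,dx,
\]
where $\widehat p_X$ is the piecewise-constant density equal to $\frac1\Delta\int_{I_i}p_X$ on each cell $I_i:=(i\Delta,(i+1)\Delta]$. So it suffices to establish the deterministic bound $\int_{\mathbb R}\left|p_X-\widehat p_X\right|\,dx\le\Delta\int_{\mathbb R}\left|p_X'(x)\right|\,dx$; dividing by $\Delta$ and taking $\limsup_{\Delta\to0}$ then gives the claim. (The inequality in fact holds for every $\Delta>0$, so the $\limsup$ costs nothing, and the conclusion is vacuous when $\int|p_X'|=\infty$.)

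For the analytic estimate I would work cell by cell. For $x\in I_i$, subtracting the average gives $p_X(x)-\widehat p_X(x)=\frac1\Delta\int_{I_i}\bigl(p_X(x)-p_X(t)\bigr)\,dt$, and by the fundamental theorem of calculus $p_X(x)-p_X(t)=\int_t^x p_X'(s)\,ds$, whence $\left|p_X(x)-p_X(t)\right|\le\int_{I_i}\left|p_X'(s)\right|\,ds$ for all $x,t\in I_i$ (the range of integration lies inside $I_i$). Hence $\left|p_X(x)-\widehat p_X(x)\right|\le\int_{I_i}\left|p_X'(s)\right|\,ds$ for a.e.\ $x\in I_i$; integrating over $x\in I_i$ gives $\int_{I_i}\left|p_X-\widehat p_X\right|\,dx\le\Delta\int_{I_i}\left|p_X'\right|\,ds$, and summing over $i\in\mathbb Z$ (the cells tile $\mathbb R$) yields $\int_{\mathbb R}\left|p_X-\widehat p_X\right|\,dx\le\Delta\int_{\mathbb R}\left|p_X'\right|\,ds$, exactly as needed. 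Tonelli's theorem justifies the integral manipulation on each bounded cell, and the pointwise inequalities integrate and sum without difficulty.

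The one step that deserves care — and the main obstacle — is the use of the fundamental theorem of calculus $p_X(x)-p_X(t)=\int_t^x p_X'(s)\,ds$, which requires $p_X$ to be (locally) absolutely continuous. This is where the hypothesis that $p_X$ is a.e.\ continuously differentiable with bounded derivative is used: boundedness of $p_X'$ places it in $L^1_{\mathrm{loc}}$, and together with continuity of $p_X$ and the negligibility of the exceptional set (so that the generalized FTC applies, $p_X$ being in fact locally Lipschitz) the identity holds. Everything else is elementary bookkeeping.
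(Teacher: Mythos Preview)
Your proof is correct and takes a genuinely different (and in fact cleaner) route than the paper. The paper argues via the mean value theorem to locate a point $x_i\in I_i$ where $p_X$ equals its cell average, then invokes Taylor's theorem to write $p_X(x)-p_X(x_i)=p_X'(x_{i,\Delta})(x-x_i)$ for some intermediate point $x_{i,\Delta}$, arriving at $\frac{1}{\Delta}|P_Y-Q_Y|_{\mathrm{TV}}\le\sum_i |p_X'(x_{i,\Delta})|\,\Delta$; it then recognises this as a Riemann sum and uses that $|p_X'|$ is bounded and a.e.\ continuous to identify the limit with the Lebesgue integral $\int|p_X'|$. By contrast, you bypass the Riemann-sum step entirely: the fundamental theorem of calculus gives $|p_X(x)-p_X(t)|\le\int_{I_i}|p_X'|$ directly, and summing the cellwise estimates yields the non-asymptotic inequality $\int|p_X-\widehat p_X|\le\Delta\int|p_X'|$ valid for every $\Delta>0$. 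Your argument therefore (i) does not need the a.e.\ continuity of $p_X'$ at all---only boundedness, to secure absolute continuity of $p_X$---and (ii) gives a strictly stronger conclusion than the $\limsup$ statement. The only point to state a bit more carefully is the one you flag: ``a.e.\ differentiable with bounded derivative'' alone does not force local Lipschitz (think of the Cantor function), so one is implicitly reading the hypothesis as $p_X$ being differentiable with bounded derivative, with $p_X'$ continuous off a null set---which is also how the paper reads it, since its use of the mean value and Taylor theorems requires the same.
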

\begin{proof}
The mean value theorem implies there exists $x_{i}\in(i\Delta,(i+1)\Delta]$
such that $P_{X}(x_{i})=\frac{1}{\Delta}\int_{i\Delta}^{(i+1)\Delta}p_{X}(x)dx$.
By Taylor's theorem, we have $p_{X}(x)=p_{X}(x_{i})+p_{X}'(x_{i,\Delta})(x-x_{i})$
for some $x_{i,\Delta}\in(i\Delta,(i+1)\Delta]$. Therefore, from
Remark \ref{rem:More-specifically,-it}, we have 
\begin{align*}
\frac{1}{\Delta}\left|P_{Y}-Q_{Y}\right|_{\mathrm{TV}} & \leq\frac{1}{\Delta}\sum_{i=-\infty}^{\infty}\int_{i\Delta}^{(i+1)\Delta}\left|p_{X}(x)-\frac{1}{\Delta}\int_{i\Delta}^{(i+1)\Delta}p_{X}(x)dx\right|dx\\
 & =\frac{1}{\Delta}\sum_{i=-\infty}^{\infty}\int_{i\Delta}^{(i+1)\Delta}\left|p_{X}(x_{i})+p_{X}'(x_{i,\Delta})(x-x_{i})-p_{X}(x_{i})\right|dx\\
 & \leq\sum_{i=-\infty}^{\infty}\left|p_{X}'(x_{i,\Delta})\right|\Delta.
\end{align*}
Since $\left|p_{X}'(x)\right|$ is continuous a.e. and bounded, $\left|p_{X}'(x)\right|$
is Riemann-integrable on every interval $[a,b]$ with $a<b$. Then
\begin{align*}
\limsup_{\Delta\to0}\sum_{i=-\infty}^{\infty}\left|p_{X}'(x_{i,\Delta})\right|\Delta & =\mathsf{Riemann}\raisebox{.75pt}{-}\int\left|p_{X}'(x)\right|dx,
\end{align*}
where $\mathsf{Riemann}\raisebox{.75pt}{-}\int$ denotes the Riemann-integral.
Furthermore, for any non-negative Riemann-integrable function, its
Riemann-integral and Lebesgue-integral are the same. That is 
\begin{align*}
\mathsf{Riemann}\raisebox{.75pt}{-}\int\left|p_{X}'(x)\right|dx & =\int\left|p_{X}'(x)\right|dx.
\end{align*}
Therefore, $\limsup_{\Delta\to0}\frac{1}{\Delta}\left|P_{Y}-Q_{Y}\right|_{\mathrm{TV}}\leq\int\left|p_{X}'(x)\right|dx$. 
\end{proof}
Proposition \ref{prop:Ucontinuous--1} implies that if the total variation
$\int\left|p_{X}'(x)\right|dx$ of $p_{X}$ is finite, then the approximation
error $\left|P_{Y}-Q_{Y}\right|_{\mathrm{TV}}$ converges to zero
at least linearly fast as $\Delta\to0$. If $\int\left|p_{X}'(x)\right|dx$
is infinity, then it is not easy to obtain a general bound on $\left|P_{Y}-Q_{Y}\right|_{\mathrm{TV}}$.
However, for some special cases, e.g., $p_{X}=-\log x,x\in(0,1]$
or $p_{X}=\left(1-r\right)x^{-r},x\in(0,1],r\in(0,1)$, we provide
upper bounds as follows. 
\begin{example}[Convergence Rate as $\Delta\to0$ for $\int\left|p_{X}'(x)\right|dx=\infty$]
\label{exam:U} \cite{owens2014exploring} For the PDF $p_{X}(x)=-\log x,x\in(0,1]$,
$\left|P_{Y}-Q_{Y}\right|_{\mathrm{TV}}\leq\frac{\Delta}{2}\ln2\pi\frac{1}{\Delta}$.
For the PDF $p_{X}(x)=\left(1-r\right)x^{-r},x\in(0,1],r\in(0,1)$,
$\left|P_{Y}-Q_{Y}\right|_{\mathrm{TV}}\leq C\Delta^{1-r}$ for some
constant $C$. 
\end{example}
Universal simulations in Theorem \ref{thm:Ucontinuous} for the uniform
distribution on $[0,1]$ for different $P_{X}$ are illustrated in
Fig. \ref{fig:blind}. For Gaussian and exponential distributions,
the total variations of their PDFs are finite. Hence the approximation
errors for these two distributions decay linearly in $\Delta$. For
logarithmic and polynomial-like distributions, the total variations
of their PDFs are infinite. As stated in Proposition \ref{exam:U},
the approximation errors for these two distributions decay respectively
in order of $\Delta\ln\frac{1}{\Delta}$ and $\Delta^{1-r}$.

\begin{figure}[t]
\centering \includegraphics[width=0.24\columnwidth]{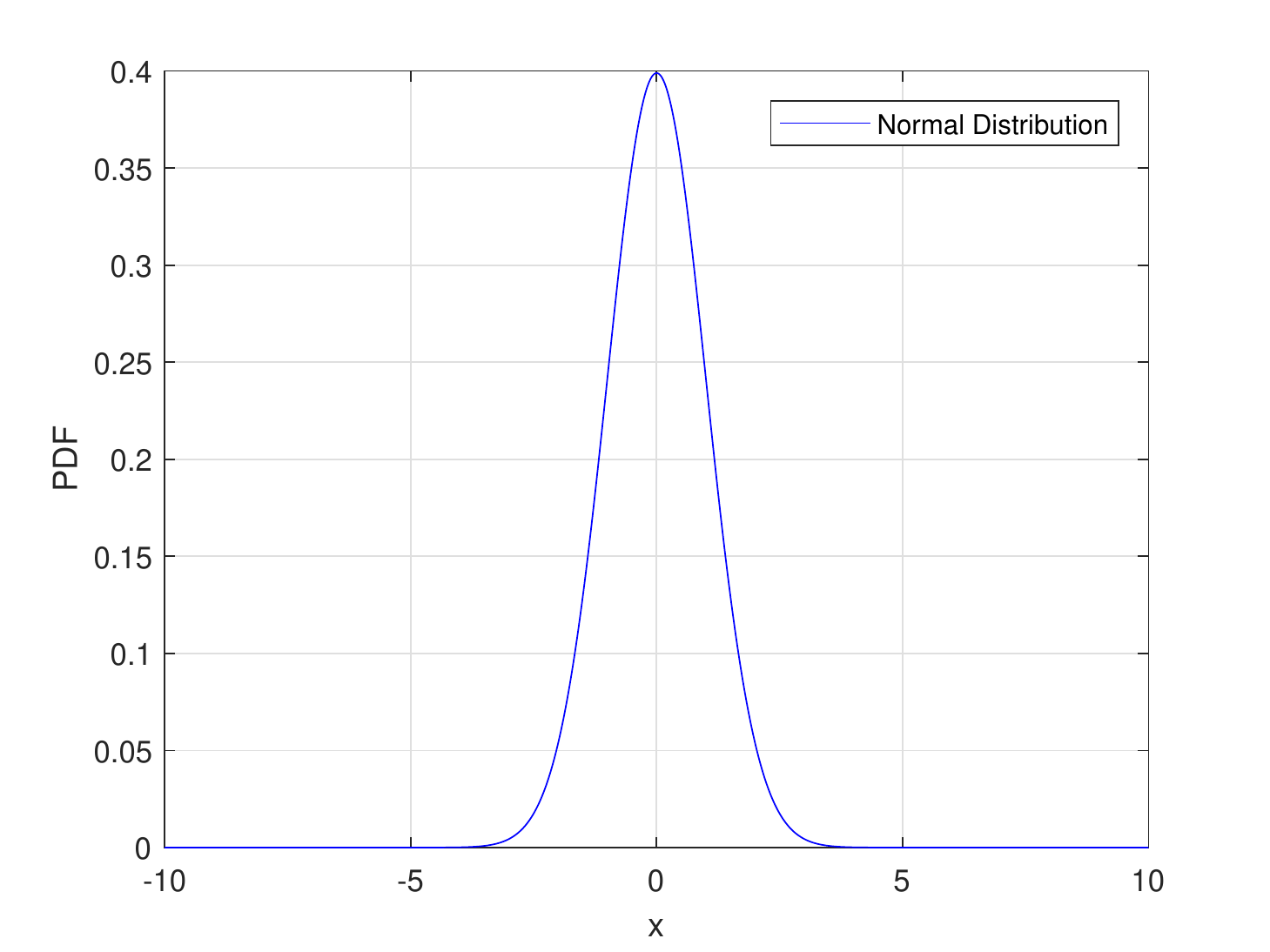} \includegraphics[width=0.24\columnwidth]{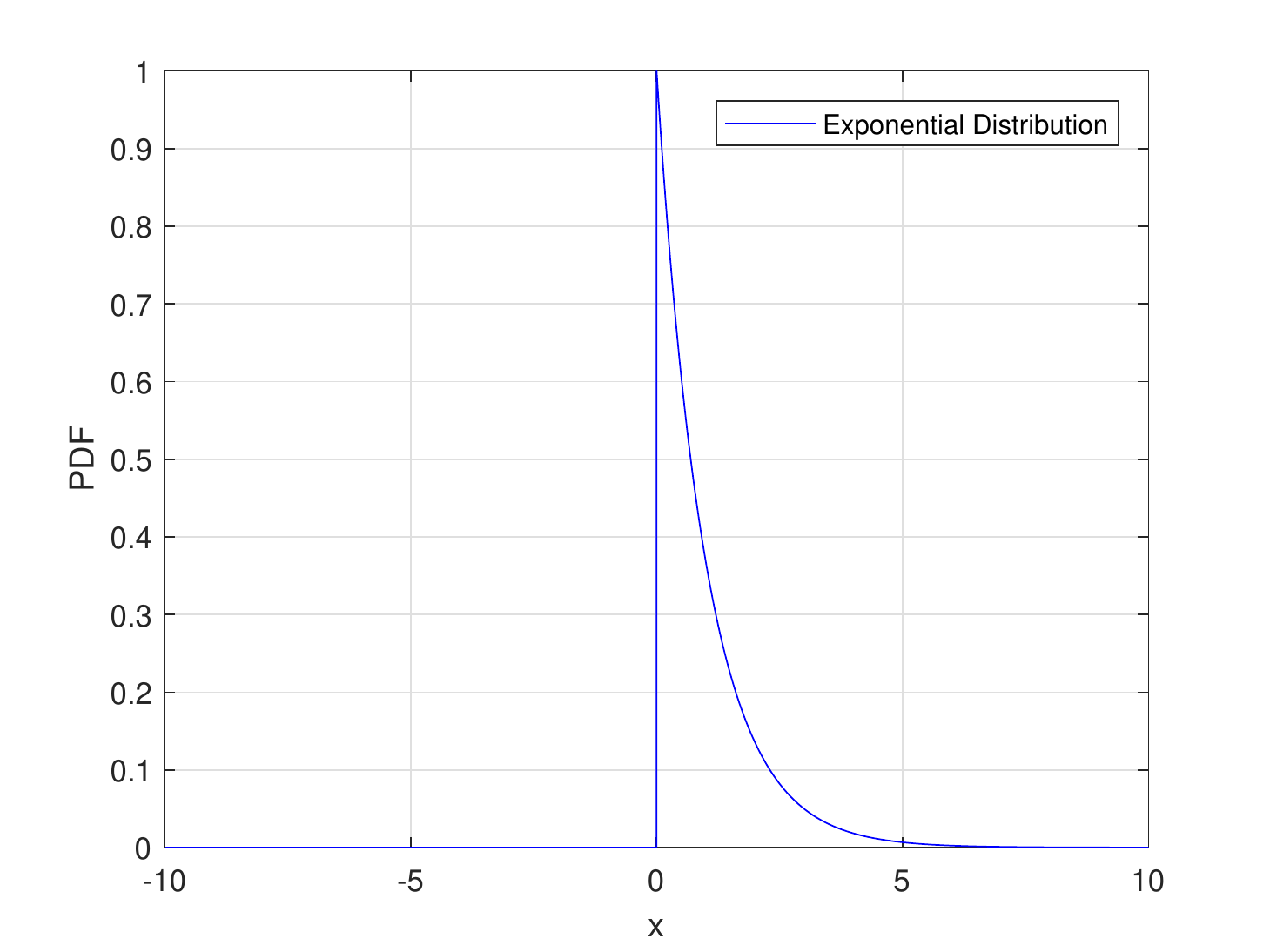}
\includegraphics[width=0.24\columnwidth]{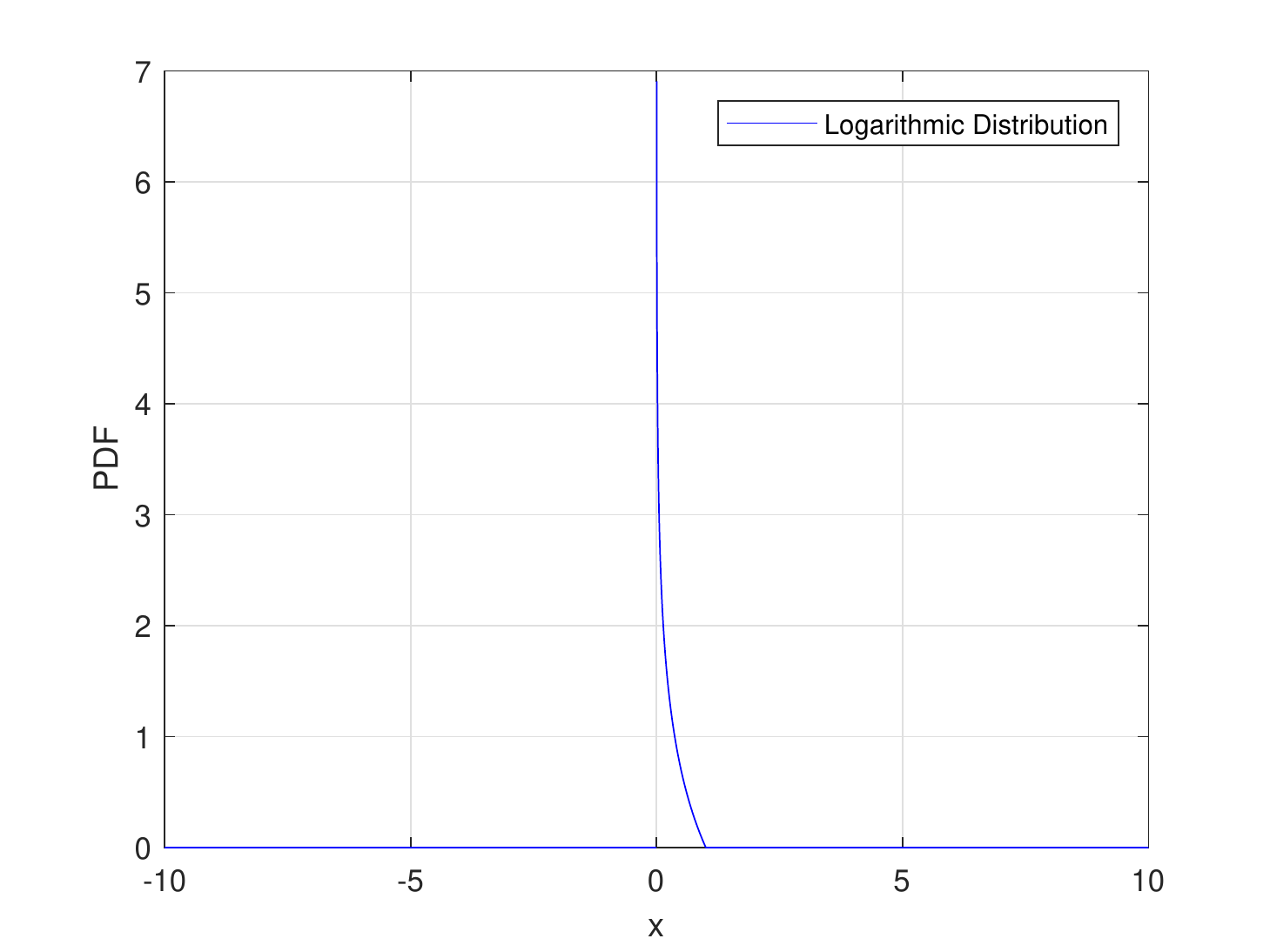} \includegraphics[width=0.24\columnwidth]{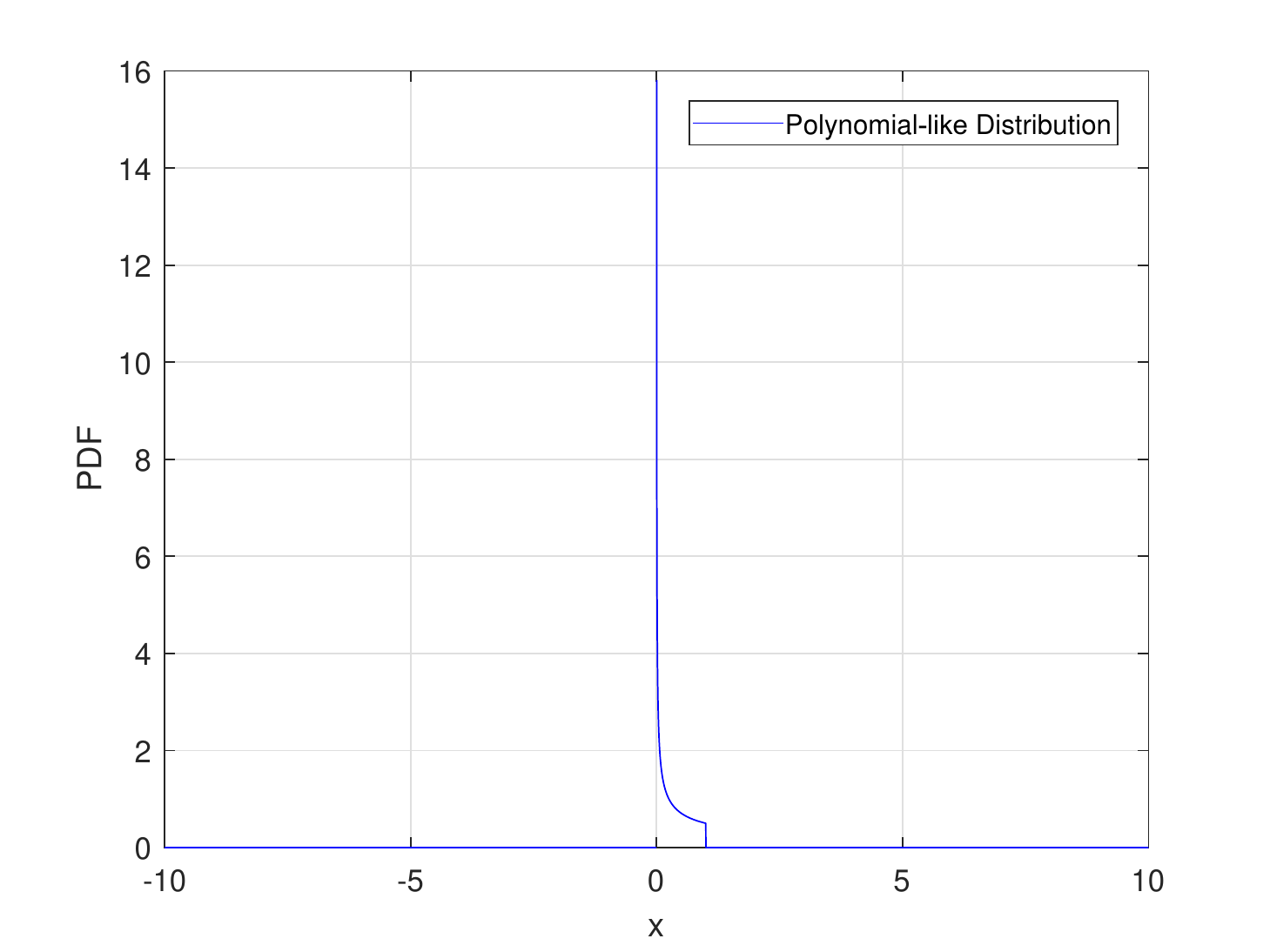}

\includegraphics[width=0.24\columnwidth]{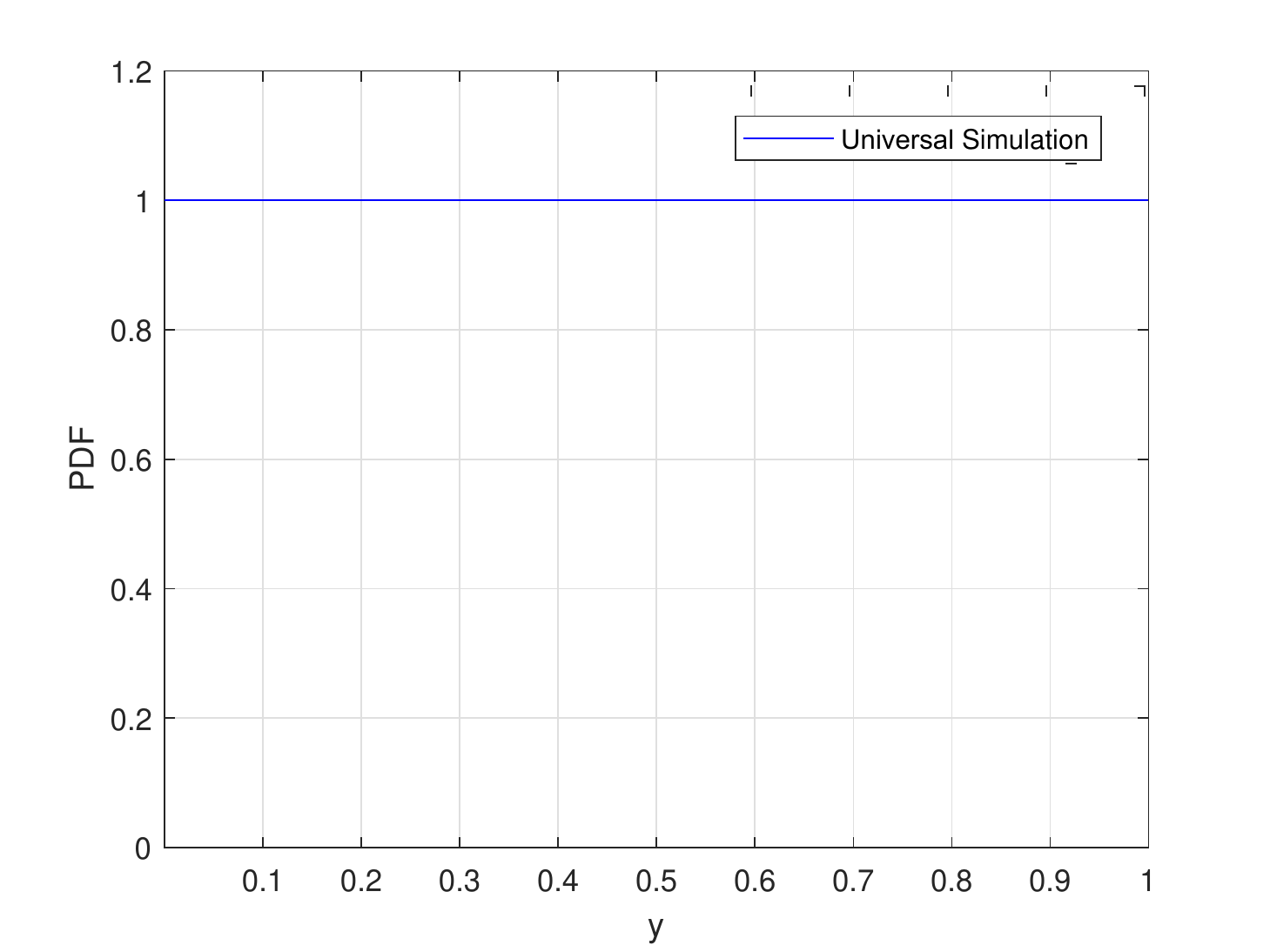} \includegraphics[width=0.24\columnwidth]{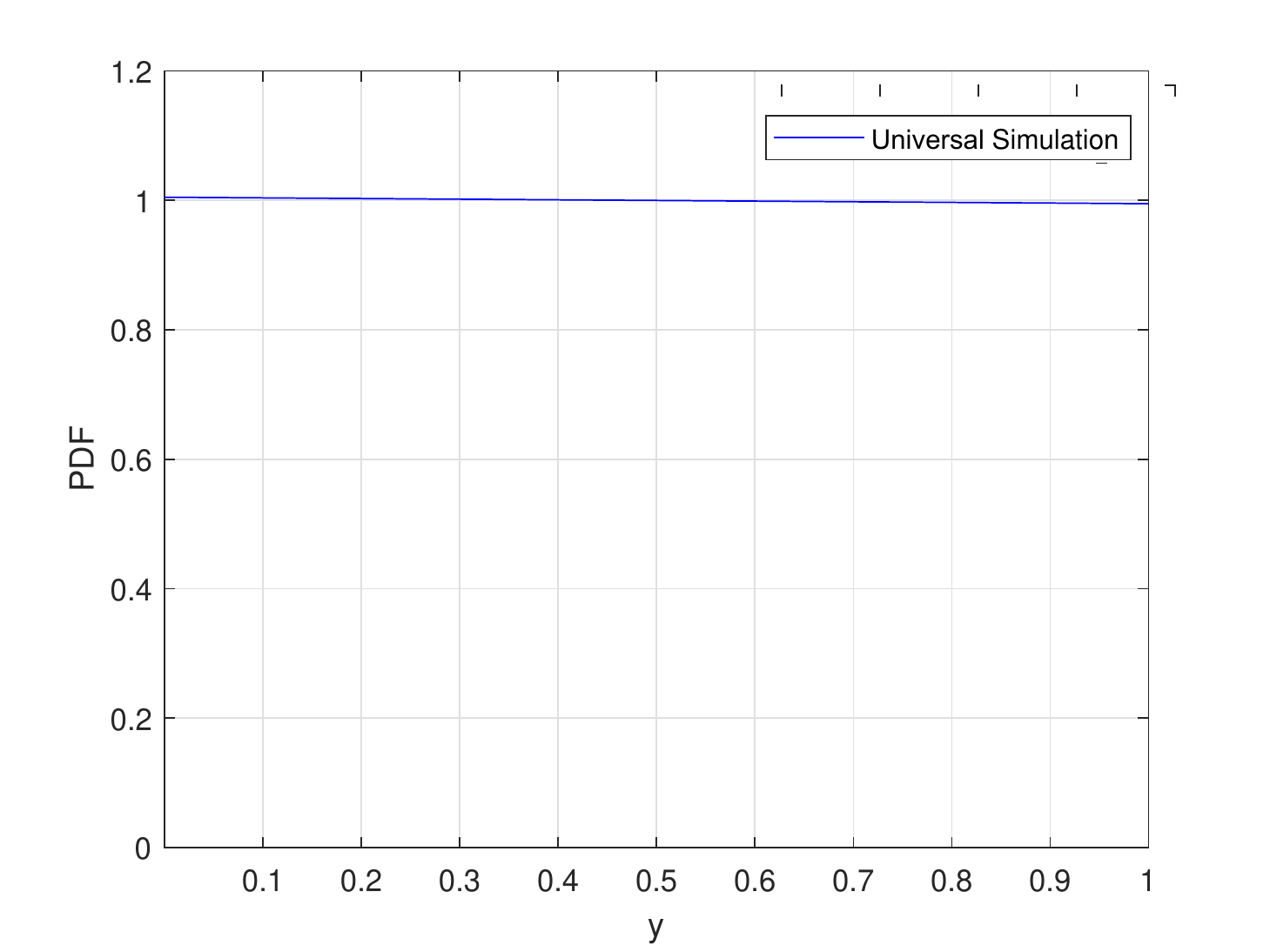}
\includegraphics[width=0.24\columnwidth]{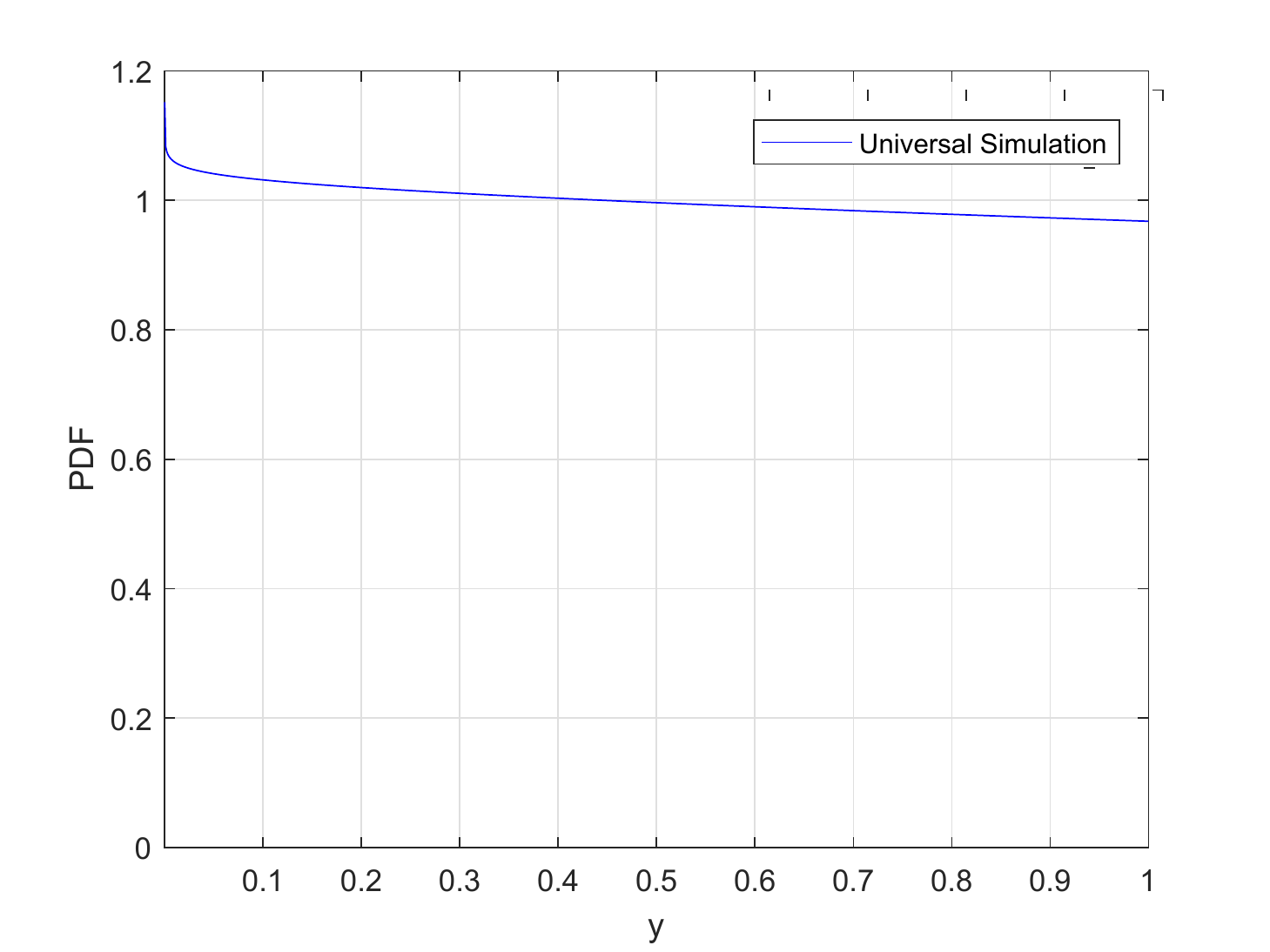} \includegraphics[width=0.24\columnwidth]{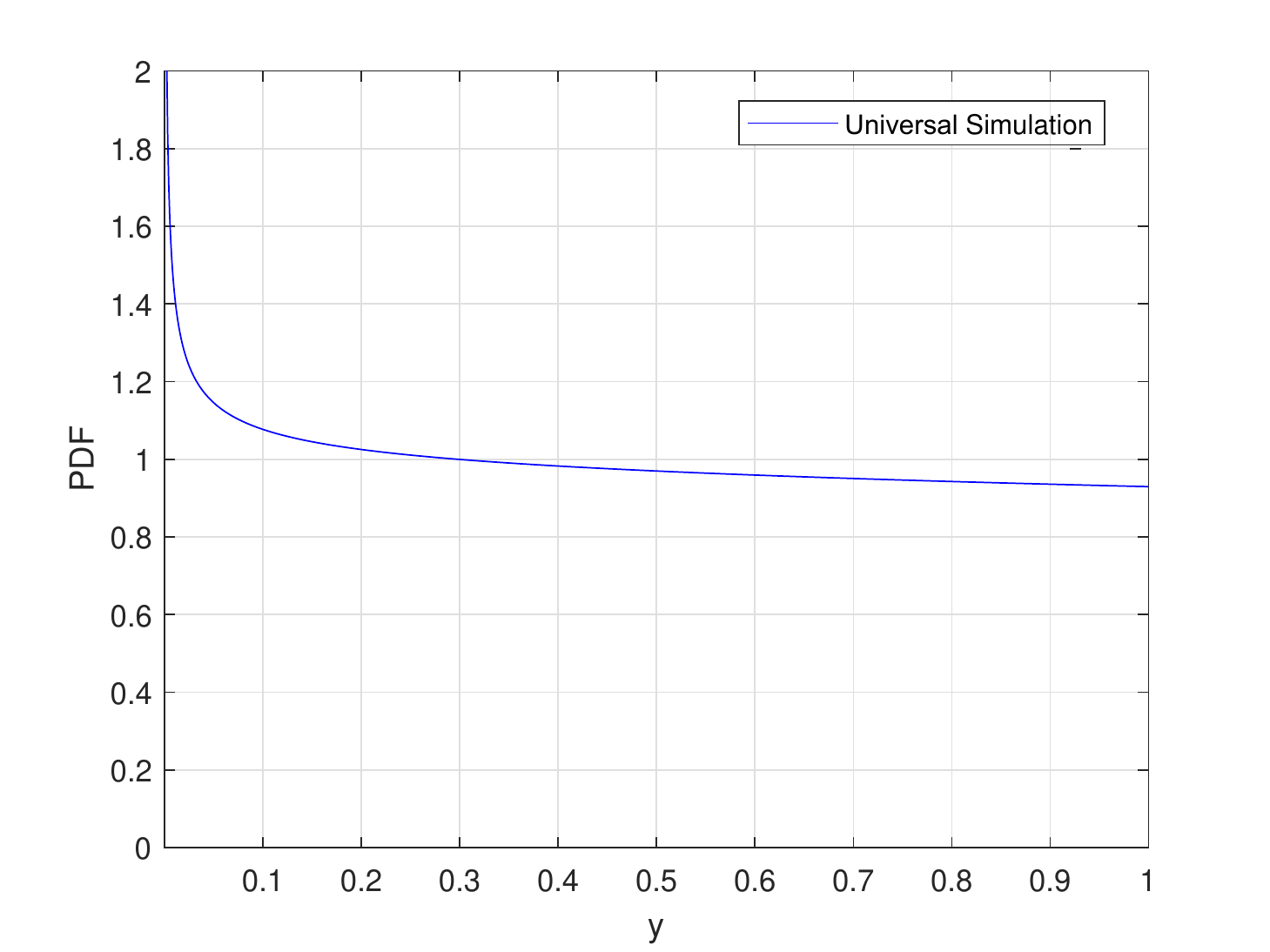}
\caption{Universal simulations in Theorem \ref{thm:Ucontinuous} for the uniform
distribution on $[0,1]$ by the mapping $x\protect\mapsto\sum_{i=-\infty}^{\infty}\frac{1}{\Delta}(x-i\Delta)1\left\{ x\in(i\Delta,(i+1)\Delta]\right\} $
with $\Delta=0.01$. The seed distributions $P_{X}$ are illustrated
in top figures, which are respectively the standard Gaussian distribution
$\mathcal{N}(0,1)$, exponential distribution $\mathrm{Exp}(1)$,
logarithmic distribution $p_{X}(x)=-\log x,x\in(0,1]$, and polynomial-like
distribution $p_{X}(x)=\left(1-r\right)x^{-r},x\in(0,1],r=0.5$. The
generated distributions $P_{Y}$ are illustrated in bottom figures.}
\label{fig:blind} 
\end{figure}

\subsubsection{Relative Entropy and Rényi Divergence Measures}

Next we extend Theorem \ref{thm:Ucontinuous} to the relative entropy
and Rényi divergence measures. The relative entropy and Rényi divergence
are two information measures that quantify the ``distance'' between
probability measures. 

Fix distributions $P_{X},Q_{X}\in\mathcal{P}(\mathbb{R},\mathcal{B}_{\mathbb{R}})$.
The {\em relative entropy} and the {\em Rényi divergence of order 
$\alpha \in (0,1)\cup(1,\infty)$} are respectively defined as
\begin{align*}
D(P_{X}\|Q_{X}) & :=\int\left(\frac{dP_{X}}{dQ_{X}}\log\frac{dP_{X}}{dQ_{X}}\right)dQ_{X}\\*
D_{\alpha}(P_{X}\|Q_{X}) & :=\frac{1}{\alpha-1}\log\int\left(\frac{dP_{X}}{dQ_{X}}\right)^{\alpha}dQ_{X},
\end{align*}
and the conditional versions are respectively defined as 
\begin{align*}
D(P_{Y|X}\|Q_{Y|X}|P_{X}) & :=D(P_{X}P_{Y|X}\|P_{X}Q_{Y|X})\\*
D_{\alpha}(P_{Y|X}\|Q_{Y|X}|P_{X}) & :=D_{\alpha}(P_{X}P_{Y|X}\|P_{X}Q_{Y|X}).
\end{align*}
The  Rényi divergence of order $\alpha \in \{0,1,\infty\}$ is defined by continuous extension.
 Throughout, $\log$ and $\exp$ are to the natural base $e$. It is
known that $D_{1}(P_{X}\|Q_{X}):=\lim_{\alpha\to1}D_{\alpha}(P_{X}\|Q_{X})=D(P_{X}\|Q_{X})$
so a special case of the Rényi divergence (resp. the conditional version)
is the usual relative entropy (resp. the conditional version). The
Rényi divergence of infinity order is defined as 
\begin{align*}
D_{\infty}(P_{X}\|Q_{X}) & :=\lim_{\alpha\to\infty}D_{\alpha}(P_{X}\|Q_{X})=\log\esssup_{P_{X}}\frac{dP_{X}}{dQ_{X}}.
\end{align*}
\begin{defn}
A function $g:\mathcal{P}_{X}\to\mathbb{R}$ is called \emph{$\alpha$-Rényi-achievable}
for the \emph{universal} $(\mathcal{P}_{X},Q_{Y})$-simulation, if
there exists a sequence of simulators $\left\{ f_{k}\right\} _{k=1}^{\infty}$
such that 
\begin{equation}
\limsup_{k\to\infty}D_{\alpha}(P_{Y_{k}}\|Q_{Y})\leq g(P_{X})\label{eq:-2}
\end{equation}
for all $P_{X}\in\mathcal{P}_{X}$, where $P_{Y_{k}}:=P_{X}\circ f_{k}^{-1}$. 
\end{defn}
\begin{defn}
\emph{The set of $\alpha$-Rényi-achievable functions} for the \emph{universal}
$(\mathcal{P}_{X},Q_{Y})$-simulation is defined as 
\[
\mathcal{E}_{\mathrm{Renyi}}^{(\alpha)}(\mathcal{P}_{X},Q_{Y}):=\left\{ g:\mathcal{P}_{X}\to\mathbb{R}\::g\textrm{ is }\alpha\raisebox{.75pt}{-}\textrm{Rényi}\raisebox{.75pt}{-}\textrm{achievable}\right\} .
\]
\end{defn}
For $\left(\mathcal{A},\mathcal{B}_{\mathcal{A}}\right)\subset\left(\mathbb{R},\mathcal{B}_{\mathbb{R}}\right)$,
define 
\[
\mathcal{P}_{\mathrm{ac}}^{(\alpha)}\left(\mathcal{A},\mathcal{B}_{\mathcal{A}}\right):=\left\{ P_{X}\in\mathcal{P}_{\mathrm{ac}}\left(\mathcal{A},\mathcal{B}_{\mathcal{A}}\right):\lim_{\Delta\to0}\sup_{\left\{ \mathcal{A}_{i}\right\} :\sup_{i}\left|\mathcal{A}_{i}\right|\le\Delta}D_{\alpha}(P_{X|\left\{ \mathcal{A}_{i}\right\} }\|P_{X})=0\right\} ,
\]
where the supremum is taken over all partitions $\left\{ \left\{ \mathcal{A}_{i}\right\} :\mathcal{A}_{i}\textrm{ are intervals, and}\  \bigcup_{i}\mathcal{A}_{i}=\mathcal{A},\ \mathcal{A}_{i}\cap\mathcal{A}_{j}=\emptyset,\forall i\neq j\right\} $
of $\mathcal{A}$, $\left|\mathcal{A}_{i}\right|$ is the length of
the interval $\mathcal{A}_{i}$, and $P_{X|\left\{ \mathcal{A}_{i}\right\} }:=\frac{1\left\{ x\in\mathcal{A}_{i}\right\} }{\left|\mathcal{A}_{i}\right|}P_{X}\left(\mathcal{A}_{i}\right)$.
Based on the notations above, we have the following theorem.
\begin{thm}
\label{thm:Ucontinuous-1} Assume $\left(\mathcal{A},\mathcal{B}_{\mathcal{A}}\right)\subset\left(\mathbb{R},\mathcal{B}_{\mathbb{R}}\right)$,
\[
\mathcal{P}_{X}=\begin{cases}
\mathcal{P}_{\mathrm{ac}} & \alpha\in[0,1)\\
\mathcal{P}_{\mathrm{ac}}^{(\alpha)}\left(\mathcal{A},\mathcal{B}_{\mathcal{A}}\right) & \alpha\in[1,\infty]
\end{cases},
\]
and $Q_{Y}$ is arbitrary. Then for the universal $(\mathcal{P}_{X},Q_{Y})$-simulation
problem, $\mathcal{E}_{\mathrm{Renyi}}^{(\alpha)}(\mathcal{P}_{X},Q_{Y})\asymp0$. 
\end{thm}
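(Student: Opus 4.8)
The plan is to reuse verbatim the universal mapping built in the proof of Theorem~\ref{thm:Ucontinuous} and simply replace the data-processing inequality for the total variation distance by the one for the R\'enyi divergence, which is valid for every order $\alpha\in[0,\infty]$. First I would reduce to a convenient target: since $G_Y^{-1}(Z)\sim Q_Y$ whenever $Z$ is uniform on $[0,1]$ (Proposition~\ref{prop:NUcontinuous}), composing any simulator of $Z$ with the fixed map $G_Y^{-1}$ and using $D_\alpha(P_{G_Y^{-1}(Z_k)}\,\|\,Q_Y)\le D_\alpha(P_{Z_k}\,\|\,\text{uniform on }[0,1])$ shows it suffices to produce, \emph{universally} over the relevant seed class, a sequence of simulators whose output has R\'enyi divergence to the uniform distribution on $[0,1]$ tending to $0$ (the resulting simulator is exactly the mapping $f_{\Delta}$ used for Theorem~\ref{thm:Ucontinuous}). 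Using $f_{\Delta}$ and the fact, established in that proof, that feeding the piecewise-uniform coarsening $\widehat P_X^{(\Delta)}$ (density $\widehat p_X(x)=\sum_i\frac{P_X(\mathcal A_i)}{|\mathcal A_i|}1\{x\in\mathcal A_i\}$, the $\mathcal A_i$ running over a uniform partition of mesh $\Delta$) through $f_{\Delta}$ produces exactly the target, the R\'enyi data-processing inequality gives, with $\Delta=\Delta_k\to0$,
\[
D_\alpha\bigl(P_{Y_k}\,\|\,\text{uniform on }[0,1]\bigr)=D_\alpha\bigl(P_X\circ f_{\Delta_k}^{-1}\,\|\,\widehat P_X^{(\Delta_k)}\circ f_{\Delta_k}^{-1}\bigr)\le D_\alpha\bigl(P_X\,\|\,\widehat P_X^{(\Delta_k)}\bigr).
\]
Hence the whole statement reduces to the single claim that the coarsening divergence $D_\alpha(P_X\,\|\,\widehat P_X^{(\Delta)})$ vanishes as $\Delta\to0$, for every $P_X$ in the indicated class.

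For $\alpha\in[0,1)$ I would prove this for \emph{all} absolutely continuous $P_X$, so that $\mathcal P_X=\mathcal P_{\mathrm{ac}}$ indeed suffices. Since $\alpha-1<0$ and $D_\alpha(P_X\,\|\,\widehat P_X^{(\Delta)})=\frac{1}{\alpha-1}\log\int p_X^\alpha(\widehat p_X)^{1-\alpha}\,dx$, it is enough to show $\int p_X^\alpha(\widehat p_X)^{1-\alpha}\,dx\to1$. By Lebesgue's differentiation theorem $\widehat p_X\to p_X$ a.e.\ (this is exactly step \eqref{eq:-9-2} in the proof of Theorem~\ref{thm:Ucontinuous}), so the integrand converges to $p_X$ a.e.; by the weighted AM--GM inequality $p_X^\alpha(\widehat p_X)^{1-\alpha}\le\alpha p_X+(1-\alpha)\widehat p_X$, whose integral equals $1$ for every $\Delta$ and which also converges a.e.\ to $p_X$. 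The generalized dominated convergence theorem then gives $\int p_X^\alpha(\widehat p_X)^{1-\alpha}\,dx\to\int p_X\,dx=1$, hence $D_\alpha(P_X\,\|\,\widehat P_X^{(\Delta)})\to0$ (recall $D_\alpha\ge0$); the boundary case $\alpha=0$ follows because $D_0\le D_\alpha$ for all $\alpha>0$.

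For $\alpha\in[1,\infty]$ the coarsening divergence does \emph{not} vanish for arbitrary absolutely continuous densities — a thin, tall spike of $p_X$ contained in a single cell keeps $D_\alpha(P_X\,\|\,\widehat P_X^{(\Delta)})$ bounded away from $0$ — and this is precisely why the seed class must shrink to $\mathcal P_{\mathrm{ac}}^{(\alpha)}(\mathcal A,\mathcal B_{\mathcal A})$. Here I would invoke the observation that a uniform partition of $\mathcal A$ into intervals of length $\le\Delta$ is one of the partitions over which the supremum in the definition of $\mathcal P_{\mathrm{ac}}^{(\alpha)}(\mathcal A,\mathcal B_{\mathcal A})$ is taken, so that the defining condition of that class forces $D_\alpha(P_X\,\|\,\widehat P_X^{(\Delta)})\to0$ (passing, when needed, between the divergence of the full measures and the cell-wise divergences via the conditional R\'enyi divergence identity $e^{(\alpha-1)D_\alpha(P_X\|\widehat P_X^{(\Delta)})}=\sum_i P_X(\mathcal A_i)\,e^{(\alpha-1)D_\alpha(P_X(\cdot\mid\mathcal A_i)\,\|\,\mathrm{Unif}(\mathcal A_i))}$, and its $\alpha=1$ analogue, the chain rule for relative entropy). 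Combined with the displayed bound this yields $D_\alpha(P_{Y_k}\,\|\,Q_Y)\to0$, so the identically-zero function is $\alpha$-R\'enyi-achievable, i.e.\ $\mathcal E_{\mathrm{Renyi}}^{(\alpha)}(\mathcal P_X,Q_Y)\preceq0$; the reverse $\mathcal E_{\mathrm{Renyi}}^{(\alpha)}(\mathcal P_X,Q_Y)\succeq0$ is immediate from $D_\alpha\ge0$, giving $\mathcal E_{\mathrm{Renyi}}^{(\alpha)}(\mathcal P_X,Q_Y)\asymp0$.

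The step I expect to be the main obstacle is precisely the $\alpha\ge1$ regime: one must verify carefully that the defining property of $\mathcal P_{\mathrm{ac}}^{(\alpha)}(\mathcal A,\mathcal B_{\mathcal A})$ really controls the coarsening divergence in the orientation demanded by the data-processing step, and that nothing degenerates when $\mathcal A$ is unbounded or when $f_{\Delta}$ has countably many cells. By contrast, the reduction to the uniform target, the mapping itself, and the entire $\alpha<1$ analysis are routine adaptations of the proof of Theorem~\ref{thm:Ucontinuous}.
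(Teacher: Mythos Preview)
Your proposal is correct and, for $\alpha\ge 1$, coincides with the paper's argument: replace the TV data-processing step \eqref{eq:-1-2} by its R\'enyi analogue to get $D_\alpha(P_{Y_k}\|Q_Y)\le D_\alpha(P_X\|\widehat P_X^{(\Delta_k)})$, then invoke the defining property of $\mathcal P_{\mathrm{ac}}^{(\alpha)}(\mathcal A,\mathcal B_{\mathcal A})$. Your flagged concern about the orientation of the divergence (the definition of $\mathcal P_{\mathrm{ac}}^{(\alpha)}$ is stated with $D_\alpha(P_{X|\{\mathcal A_i\}}\|P_X)$, while the data-processing step needs $D_\alpha(P_X\|P_{X|\{\mathcal A_i\}})$) is legitimate; the paper's proof is equally terse on this point and does not spell out the passage either.

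The genuine difference is the $\alpha\in[0,1)$ case. The paper does \emph{not} bound $D_\alpha(P_X\|\widehat P_X^{(\Delta)})$ directly; instead it proves a separate comparison lemma,
\[
\tfrac{1}{\alpha-1}\log\bigl(1+\tfrac12|P-Q|_{\mathrm{TV}}\bigr)\le D_\alpha(P\|Q)\le \tfrac{1}{\alpha-1}\log\bigl(1-\tfrac12|P-Q|_{\mathrm{TV}}\bigr),
\]
and then simply cites the already-established TV result (Theorem~\ref{thm:Ucontinuous}) to conclude. Your route---the AM--GM domination $p_X^\alpha\widehat p_X^{\,1-\alpha}\le\alpha p_X+(1-\alpha)\widehat p_X$ together with the generalized dominated convergence theorem---is self-contained and works directly at the level of the Hellinger integral, avoiding both the auxiliary lemma and the detour through TV. The paper's route is shorter given that Theorem~\ref{thm:Ucontinuous} is already in hand, and it yields the TV--R\'enyi equivalence as an independent byproduct; your route is more intrinsic to the R\'enyi setting. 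Both are valid.
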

\begin{rem}
For $0<a\le b<\infty$, 
\begin{align*}
\mathcal{P}_{\mathrm{ac}}^{[a,b]}\left(\mathcal{A},\mathcal{B}_{\mathcal{A}}\right) & :=\left\{ P_{X}\in\mathcal{P}_{\mathrm{ac}}:a\le p(x)\le b\textrm{ a.e. in }\mathcal{A},\textrm{ and }P_{X}(\mathbb{R}\backslash\mathcal{A})=0\right\} \\
 & \subset\mathcal{P}_{\mathrm{ac}}^{(\alpha)}\left(\mathcal{A},\mathcal{B}_{\mathcal{A}}\right),\alpha\in[1,\infty]
\end{align*}
Hence Theorem \ref{thm:Ucontinuous-1} still holds for $\mathcal{P}_{X}=\mathcal{P}_{\mathrm{ac}}^{[a,b]}\left(\mathcal{A},\mathcal{B}_{\mathcal{A}}\right)$
and $\alpha\in[1,\infty]$. Furthermore, Theorem \ref{thm:Ucontinuous-1}
also holds if in \eqref{eq:-2} $D_{\alpha}(P_{Y_{k}}\|Q_{Y})$ is
replaced with $D_{\alpha}(Q_{Y}\|P_{Y_{k}})$, $\alpha\in[0,\infty]$. 
\end{rem}
\begin{proof}
It is easy to verify that \eqref{eq:-1-2} with the total variable
distance replaced by the Rényi divergence still holds.  This implies
the cases $\alpha\in[1,\infty]$ in Theorem \ref{thm:Ucontinuous-1}.
The cases $\alpha\in[0,1)$ are implied by combining Theorem \ref{thm:Ucontinuous}
with the following lemma which shows the ``equivalence'' between
the Rényi divergence of order $\alpha\in(0,1)$ and the TV distance,
in the sense that $D_{\alpha}(P_{X}\|Q_{X})\to0$ if and only if $\left|P_{X}-Q_{X}\right|_{\mathrm{TV}}\to0$. 
\end{proof}
\begin{lem}
For $\alpha\in(0,1)$,
\[
\frac{1}{\alpha-1}\log\left(1+\frac{1}{2}\left|P_{X}-Q_{X}\right|_{\mathrm{TV}}\right)\leq D_{\alpha}(P_{X}\|Q_{X})\leq\frac{1}{\alpha-1}\log\left(1-\frac{1}{2}\left|P_{X}-Q_{X}\right|_{\mathrm{TV}}\right).
\]
\end{lem}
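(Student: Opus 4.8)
The plan is to strip off the logarithms and the (negative) factor $\frac{1}{\alpha-1}$ first, reducing the lemma to a clean two-sided estimate on the R\'enyi/Hellinger affinity. Fix a common dominating measure $\mu$ (e.g.\ $\mu=P_X+Q_X$), put $p:=dP_X/d\mu$, $q:=dQ_X/d\mu$, and write $A_\alpha:=\int p^{\alpha}q^{1-\alpha}\,d\mu$, so that $D_\alpha(P_X\|Q_X)=\frac{1}{\alpha-1}\log A_\alpha$ (this quantity being independent of the choice of $\mu$). Since $\alpha\in(0,1)$ we have $\alpha-1<0$; multiplying the two claimed inequalities by $\alpha-1$ and exponentiating, one sees that the lemma is \emph{equivalent} to the sandwich
\[
1-\tfrac12\,|P_X-Q_X|_{\mathrm{TV}}\;\le\;A_\alpha\;\le\;1+\tfrac12\,|P_X-Q_X|_{\mathrm{TV}}.
\]
So it suffices to pin $A_\alpha$ to within $\tfrac12\,|P_X-Q_X|_{\mathrm{TV}}$ of $1$.

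For the upper bound on $A_\alpha$ I would invoke the weighted AM--GM (Young) inequality pointwise: $p^{\alpha}q^{1-\alpha}\le\alpha p+(1-\alpha)q$ for all $x$, and integrate against $\mu$ to get $A_\alpha\le\alpha+(1-\alpha)=1\le 1+\tfrac12|P_X-Q_X|_{\mathrm{TV}}$. This is just the standard fact $D_\alpha\ge 0$, and already yields the left-hand inequality of the lemma with room to spare.

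For the lower bound on $A_\alpha$ --- the substantive half --- the key pointwise inequality is $p^{\alpha}q^{1-\alpha}\ge\min\{p,q\}$: on $\{p\ge q\}$ the left side is $\ge q^{\alpha}q^{1-\alpha}=q$, and on $\{p<q\}$ it is $\ge p^{\alpha}p^{1-\alpha}=p$ (with the degenerate cases $p=0$ or $q=0$ trivial under the convention $0^{\beta}=0$ for $\beta>0$). Integrating gives $A_\alpha\ge\int\min\{p,q\}\,d\mu$, and the elementary overlap identity $\int\min\{p,q\}\,d\mu=1-\tfrac12\int|p-q|\,d\mu$ converts this into the required lower bound $A_\alpha\ge 1-\tfrac12\,|P_X-Q_X|_{\mathrm{TV}}$ (after identifying $\int|p-q|\,d\mu$ with the total variation distance in the relevant normalization), hence --- undoing the sign flip --- into the claimed upper bound on $D_\alpha$. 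One should also check that $|P_X-Q_X|_{\mathrm{TV}}\le 1$ keeps the logarithm arguments positive so all quantities are well-defined, and note that the estimate $A_\alpha\ge\int\min\{p,q\}\,d\mu$ is saturated when $P_X\perp Q_X$, so this route cannot improve the coefficient.

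I do not expect a serious obstacle: both pointwise inequalities and the overlap identity are elementary. The only points requiring care are (i) consistently tracking the sign of $\alpha-1$ when translating between statements about $D_\alpha$ and statements about $A_\alpha$, since it reverses the direction of both inequalities, and (ii) matching the normalization of $|\cdot|_{\mathrm{TV}}$ (the supremum $\sup_{A}|P(A)-Q(A)|$ versus $\tfrac12\|p-q\|_{L^1}$) so that the factor $\tfrac12$ comes out as written. The conceptual content is simply the recognition that it is the affinity $A_\alpha$, not $D_\alpha$ itself, that interacts linearly with the total variation distance.
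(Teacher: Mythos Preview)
Your proof is correct and, for the substantive upper bound on $D_\alpha$, essentially identical to the paper's: the paper splits the integral at $A=\{x:p(x)\ge q(x)\}$ and bounds $(dP_X/dQ_X)^\alpha\ge 1$ on $A$ and $(dQ_X/dP_X)^{1-\alpha}\ge 1$ on $A^c$, which is precisely your pointwise inequality $p^\alpha q^{1-\alpha}\ge\min\{p,q\}$ written out, followed by the overlap identity $\int\min\{p,q\}\,d\mu=1-\tfrac12\int|p-q|\,d\mu$.

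The one small difference is in the (easy) left-hand inequality. The paper argues symmetrically, bounding $(dP_X/dQ_X)^\alpha\le dP_X/dQ_X$ on $A$ and $(dQ_X/dP_X)^{1-\alpha}\le dQ_X/dP_X$ on $A^c$ to get $A_\alpha\le\int\max\{p,q\}\,d\mu=1+\tfrac12\int|p-q|\,d\mu$; you instead invoke AM--GM to get the sharper $A_\alpha\le 1$ (equivalently $D_\alpha\ge 0$) and note this already dominates what is claimed. Your route is cleaner and in fact yields a strictly stronger left inequality; the paper's route has only the aesthetic advantage that its two bounds on $A_\alpha$ are visibly symmetric. Your caution about the TV normalization is well placed: the paper's own usage of $|\cdot|_{\mathrm{TV}}$ oscillates between $\sup_A|P(A)-Q(A)|$ and $\int|p-q|\,d\mu$, so tracking the factor of $\tfrac12$ does require care.
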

\begin{proof}
This lemma follows from the following two inequalities. Define $A:=\left\{ x:P_{X}(x)\ge Q_{X}(x)\right\} $.
Then 
\begin{align*}
D_{\alpha}(P_{X}\|Q_{X}) & =\frac{1}{\alpha-1}\log\left(\int_{A}\left(\frac{dP_{X}}{dQ_{X}}\right)^{\alpha}\mathrm{d}Q_{X}+\int_{\mathbb{R}\backslash A}\left(\frac{dQ_{X}}{dP_{X}}\right)^{1-\alpha}\mathrm{d}P_{X}\right)\\
 & \leq\frac{1}{\alpha-1}\log\left(\int_{A}dQ_{X}+\int_{\mathbb{R}\backslash A}dP_{X}\right)\\
 & =\frac{1}{\alpha-1}\log\left(1-\left(P_{X}\left(A\right)-Q_{X}\left(A\right)\right)\right)\\
 & =\frac{1}{\alpha-1}\log\left(1-\frac{1}{2}\left|P_{X}-Q_{X}\right|_{\mathrm{TV}}\right).
\end{align*}
Similarly, 
\begin{align*}
D_{\alpha}(P_{X}\|Q_{X}) & \geq\frac{1}{\alpha-1}\log\left(1+\frac{1}{2}\left|P_{X}-Q_{X}\right|_{\mathrm{TV}}\right).
\end{align*}
\end{proof}

\subsection{Discontinuous seed distributions}

Next we consider the case $\mathcal{P}_{X}\subseteq\mathcal{P}_{\mathrm{dc}}$.
We first derive a discontinuous version of Theorem \ref{thm:Ucontinuous}.
Since in the previous subsection, Theorem \ref{thm:Ucontinuous} is
proven only for absolutely continuous seed distributions, one may
doubt the effectiveness of the proposed universal mapping in Fig.
\ref{fig:Illustration-of-the} when the seed distributions are discontinuous
or even discrete. In the following, we prove that our proposed universal
mapping still works well for discontinuous seed distributions, as
long as the CDF of seed distribution is smooth enough.

Assume $\left\{ \Delta_{n}\right\} $ is a sequence of non-increasing
positive numbers. Assume $\left\{ \mathcal{P}_{X_{n}}\right\} _{n=1}^{\infty}$
is a sequence of distribution sets such that for every sequence $\left\{ P_{X_{n}}\right\} \in\left\{ \mathcal{P}_{X_{n}}\right\} $,
its CDFs $\left\{ F_{X_{n}}\right\} $ satisfy
\begin{equation}
\lim_{n\to\infty}\sup_{x_{1}:F_{X_{n}}(x_{1}+\Delta_{n})>F_{X_{n}}(x_{1})}\sup_{x\in(x_{1},x_{1}+\Delta_{n}]}\left|\frac{F_{X_{n}}(x_{1}+x)-F_{X_{n}}(x_{1})}{F_{X_{n}}(x_{1}+\Delta_{n})-F_{X_{n}}(x_{1})}-\frac{x}{\Delta_{n}}\right|=0.\label{eq:-26}
\end{equation}
We obtain a discontinuous version of Theorem \ref{thm:Ucontinuous}.
The proof of Proposition \ref{prop:Udiscontinuous} is provided in
Appendix \ref{sec:Proof-of-Theorem-discrete-1}. 
\begin{prop}
\label{prop:Udiscontinuous} Assume $\left\{ \mathcal{P}_{X_{n}}\right\} $
is the sequence of distribution sets defined above. Then there exists
a sequence of universal mappings $Y_{n}=f_{n}(X_{n})$ (which are
dependent on $\left\{ \Delta_{n}\right\} $) such that $\lim_{n\to\infty}\left|P_{Y_{n}}-Q_{Y}\right|_{\mathrm{KS}}=0,\forall\left\{ P_{X_{n}}\right\} \in\left\{ \mathcal{P}_{X_{n}}\right\} $.
That is, $\lim_{n\to\infty}\mathcal{E}_{\mathrm{KS}}(\mathcal{P}_{X_{n}},Q_{Y})\asymp0$. 
\end{prop}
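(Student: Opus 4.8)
The plan is to reuse the universal mapping from the proof of Theorem~\ref{thm:Ucontinuous}, but with interval length $\Delta_n$, and to run the error analysis directly on cumulative distribution functions — the natural object for the KS distance — rather than on densities. For each $n$ I take
\[ f_n(x):=\sum_{i=-\infty}^{\infty}G_Y^{-1}\!\Bigl(\tfrac{1}{\Delta_n}(x-i\Delta_n)\Bigr)1\{x\in(i\Delta_n,(i+1)\Delta_n]\},\qquad G_Y^{-1}(t):=\min\{y:G_Y(y)\ge t\}, \]
which is Borel measurable and does not depend on $P_{X_n}$, so it is a legitimate universal simulator. (A technically smoother variant, avoiding a small endpoint issue with $G_Y^{-1}(1)$, is to first simulate the uniform law on $[0,1]$ via the sawtooth map $x\mapsto\sum_i\tfrac{1}{\Delta_n}(x-i\Delta_n)1\{x\in(i\Delta_n,(i+1)\Delta_n]\}$ and then transfer to an arbitrary $Q_Y$ by Lemma~\ref{lem:continuousisharder}; I describe the direct version.)

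The computation rests on the standard characterisation $G_Y^{-1}(t)\le y\iff t\le G_Y(y)$ of the generalised inverse of a right-continuous CDF. It yields, for every $y$, the preimage identity $\{x:f_n(x)\le y\}=\bigcup_i(i\Delta_n,\,i\Delta_n+\Delta_n G_Y(y)]$, and hence, with $P_{Y_n}:=P_{X_n}\circ f_n^{-1}$ and $F_{X_n}$ the CDF of $P_{X_n}$,
\[ P_{Y_n}\bigl((-\infty,y]\bigr)=\sum_{i=-\infty}^{\infty}\Bigl(F_{X_n}\bigl(i\Delta_n+\Delta_n G_Y(y)\bigr)-F_{X_n}(i\Delta_n)\Bigr). \]
Fix an admissible sequence $\{P_{X_n}\}\in\{\mathcal{P}_{X_n}\}$ and let $\eps_n$ be the supremum appearing on the left-hand side of \eqref{eq:-26}, so $\eps_n\to0$ by hypothesis. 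Since $G_Y(y)\in[0,1]$, hypothesis \eqref{eq:-26} applied at $x_1=i\Delta_n$ shows that the $i$-th summand above differs from $G_Y(y)\bigl(F_{X_n}((i{+}1)\Delta_n)-F_{X_n}(i\Delta_n)\bigr)$ by at most $\eps_n\bigl(F_{X_n}((i{+}1)\Delta_n)-F_{X_n}(i\Delta_n)\bigr)$, a zero-$P_{X_n}$-mass interval contributing $0$ to both sides. Summing over $i$ (the series converge absolutely) and invoking the telescoping identity $\sum_i\bigl(F_{X_n}((i{+}1)\Delta_n)-F_{X_n}(i\Delta_n)\bigr)=1$ gives $\bigl|P_{Y_n}((-\infty,y])-G_Y(y)\bigr|\le\eps_n$ for every $y$, hence $|P_{Y_n}-Q_Y|_{\mathrm{KS}}\le\eps_n\to0$; since $f_n$ was chosen without knowledge of $P_{X_n}$, this is the asserted universal statement.

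This is the CDF-level analogue of the proof of Theorem~\ref{thm:Ucontinuous}: the role played there by the almost-everywhere convergence of the interval-averaged density to the density (Lebesgue differentiation), which makes the conditional law on each small interval asymptotically uniform, is played here directly by hypothesis \eqref{eq:-26} — and one uses \eqref{eq:-26} only at the grid points $x_1=i\Delta_n$. Granting \eqref{eq:-26}, the remaining steps are bookkeeping, so there is no deep obstacle; the items that need care are (i) measurability of $f_n$ and the preimage identity, both inherited from Theorem~\ref{thm:Ucontinuous}; (ii) the degenerate values $G_Y(y)\in\{0,1\}$ and intervals of zero mass; (iii) giving $G_Y^{-1}$ a finite value at $t=1$, relevant only when $Q_Y$ has unbounded support and then reached only on the grid $\Delta_n\mathbb Z$, whose total $P_{X_n}$-mass \eqref{eq:-26} forces to be $O(\eps_n)$ so that any finite choice perturbs the KS bound by at most $O(\eps_n)$; and (iv) the uniformity in $y$ of the bound $\eps_n$, which holds because $y$ enters \eqref{eq:-26} only through the admissible value $t=G_Y(y)\in[0,1]$. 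The one genuinely delicate point, which is where hypothesis \eqref{eq:-26} does the real work, is item (iii): the interplay between the universal map and atoms of $P_{X_n}$ located near the grid.
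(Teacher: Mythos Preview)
Your proof is correct and follows essentially the same approach as the paper: both use the same periodic universal mapping with interval length $\Delta_n$, compute the output CDF as $\sum_i\bigl(F_{X_n}(i\Delta_n+\Delta_n t)-F_{X_n}(i\Delta_n)\bigr)$ with $t=G_Y(y)$ (the paper first reduces to $Q_Y=\mathrm{Unif}[a,b]$ via Lemma~\ref{lem:continuousisharder}, which is exactly your parenthetical ``smoother variant''), factor each summand as the interval mass times the ratio appearing in \eqref{eq:-26}, and then bound by the weighted sum $\sum_i\bigl(F_{X_n}((i{+}1)\Delta_n)-F_{X_n}(i\Delta_n)\bigr)\cdot\eps_n=\eps_n$. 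Your write-up is slightly more careful about the edge cases (zero-mass intervals, $G_Y(y)\in\{0,1\}$, the value $G_Y^{-1}(1)$), but the substance is identical.
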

The proposition above implies that if the CDF of seed random variable
$X_{n}$ gets more and more smooth as $n\to\infty$ in the sense that
it can be approximated by a linear function for every small interval
$(x_{1},x_{1}+\Delta_{n}]$ as \eqref{eq:-26}, then we can find a
sequence of universal mappings that achieve vanishing KS-approximation
error. Here is a simple example. 
\begin{example}
\label{exa:Assume--is}Assume $X$ is an absolutely continuous random
variable with a bounded PDF. We define $X_{n}:=\frac{\left\lfloor nX\right\rfloor }{n}$
as a quantized version of $X$ with quantization step $\frac{1}{n}$,
and $\Delta_{n}$ is set to $\frac{1}{\sqrt{n}}$, then $\left(\left\{ X_{n}\right\} ,\left\{ \Delta_{n}\right\} \right)$
satisfies \eqref{eq:-26}.
\end{example}
The example above with $\frac{1}{n}=0.0001$, $\Delta=0.01$, and
$P_{X}$ to be the standard Gaussian distribution or logarithmic distribution,
is illustrated in Fig. \ref{fig:blind-discrete}.

\begin{figure}[t]
\centering 

\includegraphics[width=0.45\columnwidth]{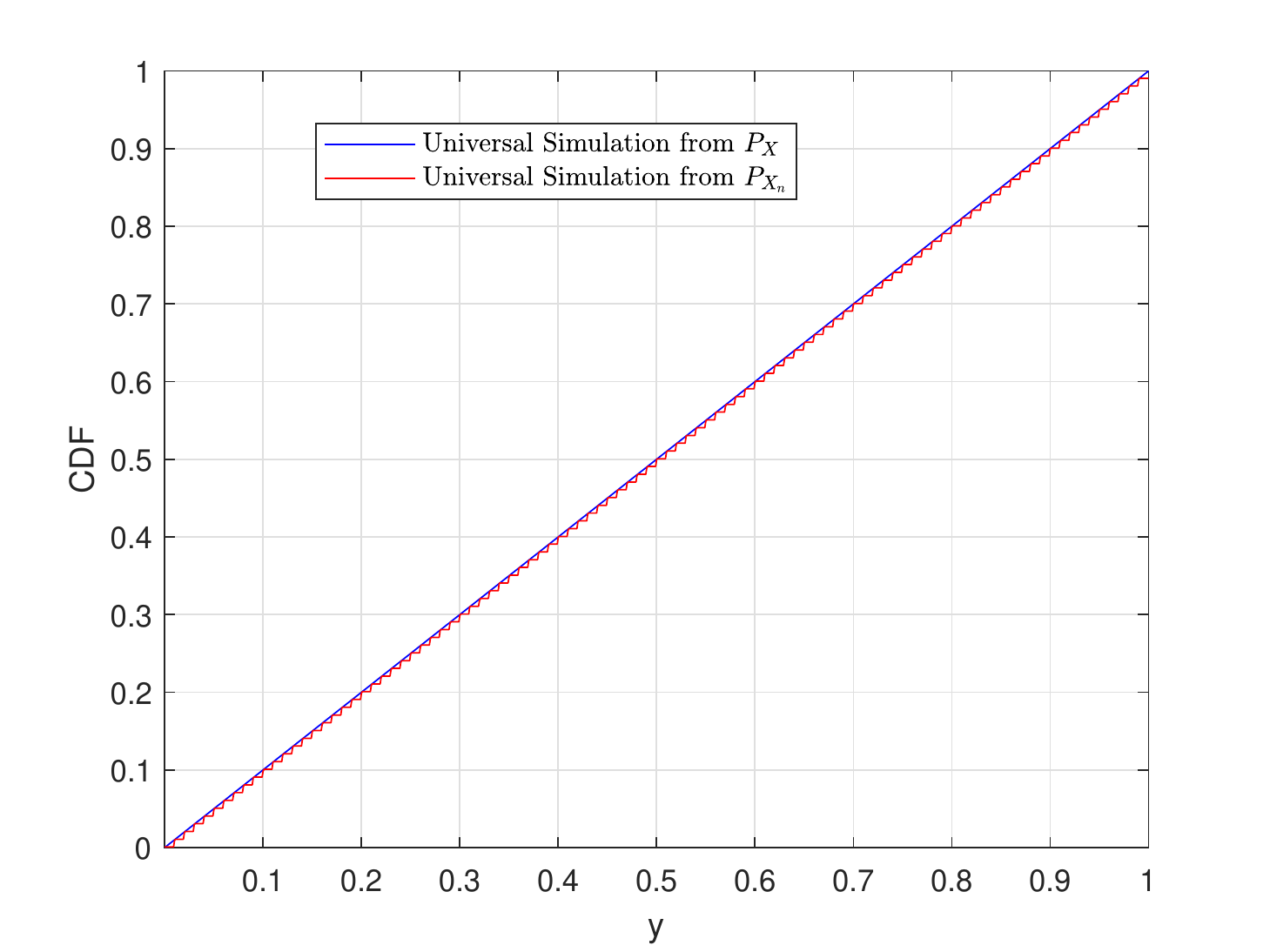} \includegraphics[width=0.45\columnwidth]{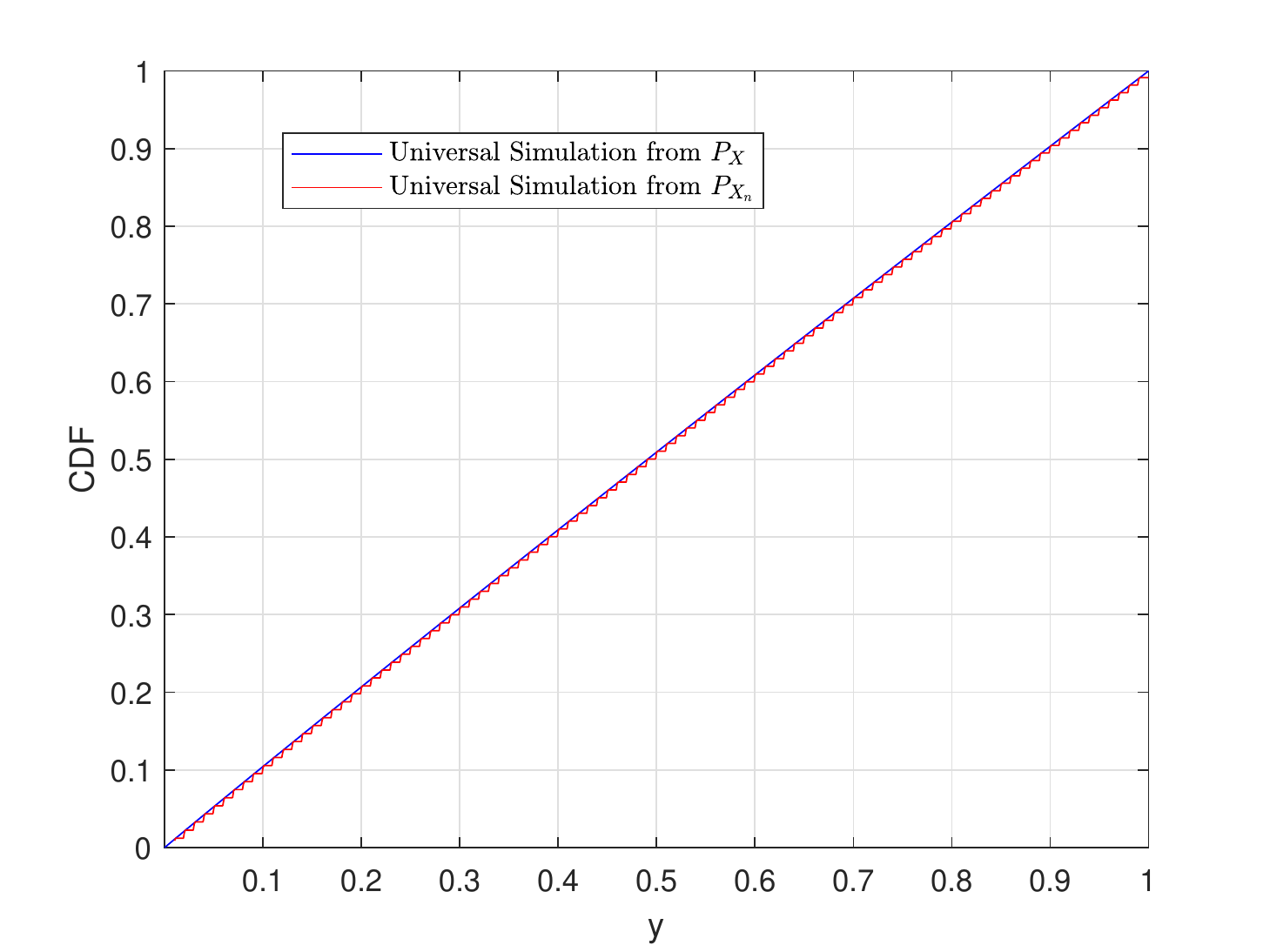} 

\caption{Universal simulations in Proposition \ref{prop:Udiscontinuous} for
the uniform distribution on $[0,1]$ by the mapping $x\protect\mapsto\sum_{i=-\infty}^{\infty}\frac{1}{\Delta}(x-i\Delta)1\left\{ x\in(i\Delta,(i+1)\Delta]\right\} $
with $\Delta=0.01$. The blue and red curves in the left figure correspond
to the cases in which the seed distributions $P_{X}$ are respectively
  the standard Gaussian distribution $\mathcal{N}(0,1)$ and its
quantized version. The blue and red curves in the right figure correspond
to the cases in which the seed distributions $P_{X}$ are respectively
  the logarithmic distribution $p_{X}(x)=-\log x,x\in(0,1]$ and
its quantized version.  For both of these two cases, the quantized
versions are generated by using the same quantization step $\frac{1}{n}=0.0001$.}
\label{fig:blind-discrete} 
\end{figure}

If the seed is a sequence of i.i.d. discrete random vectors, then
the approximation error decays exponentially fast. Given a Borel subset
$\left(\mathcal{X},\mathcal{B}_{\mathcal{X}}\right)\subseteq\left(\mathbb{R},\mathcal{B}_{\mathbb{R}}\right)$
with $\mathcal{X}$ countable, $\mathcal{P}(\mathcal{X},\mathcal{B}_{\mathcal{X}})$
denotes the set of distributions on $\left(\mathcal{X},\mathcal{B}_{\mathcal{X}}\right)$. 
\begin{thm}
\label{thm:Udiscrete} Assume $\mathcal{P}_{X}=\mathcal{P}(\mathcal{X},\mathcal{B}_{\mathcal{X}})$
and $Q_{Y}$ is continuous. Then for the universal $(\mathcal{P}_{X}^{(n)},Q_{Y})$-simulation
problem, $\mathcal{E}_{\mathrm{KS}}(\mathcal{P}_{X}^{(n)},Q_{Y})\dotasymp\left(\max_{x}P_{X}(x)\right)^{n}$. 
\end{thm}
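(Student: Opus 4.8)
The plan is to prove the two relations $\mathcal{E}_{\mathrm{KS}}(\mathcal{P}_X^{(n)},Q_Y)\dotsucceq(\max_x P_X(x))^n$ and $\mathcal{E}_{\mathrm{KS}}(\mathcal{P}_X^{(n)},Q_Y)\dotpreceq(\max_x P_X(x))^n$ separately; throughout I write $p_{\max}:=\sup_x P_X(x)$, which is positive and attained (only finitely many symbols can have probability exceeding $p_{\max}/2$), so it is a maximum. For the converse I would simply invoke Corollary \ref{cor:NUdiscontinuous}: a discrete $P_X$ is in particular discontinuous, and $Q_Y$ is continuous by hypothesis, so $E_{\mathrm{KS}}(P_X^n,Q_Y)=\tfrac12 p_{\max}^n$; since the non-universal error never exceeds the universal one, every $g'\in\mathcal{E}_{\mathrm{KS}}(\mathcal{P}_X^{(n)},Q_Y)$ satisfies $g'(P_X)\ge\tfrac12 p_{\max}^n$, hence $\mathcal{E}_{\mathrm{KS}}(\mathcal{P}_X^{(n)},Q_Y)\dotsucceq p_{\max}^n$. (The underlying obstruction is that the constant string $x^{*n}$ with $P_X(x^*)=p_{\max}$ forces $P_{Y_n}$ to carry an atom of mass $\ge p_{\max}^n$, and a continuous CDF cannot track a jump of that size.)

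For achievability I would exhibit, for each $n$, one universal simulator. Fix an enumeration $\tau^{(1)},\tau^{(2)},\dots$ of the (countably many) $n$-types on $\mathcal{X}$, and inside each type class $T_{\tau}\subseteq\mathcal{X}^n$ order the strings lexicographically with respect to a fixed ordering of $\mathcal{X}$; let $m_{\tau}:=|T_{\tau}|$ and let $j(x^n)$ be the rank of $x^n$ inside its own type class. Put $h_n(x^n):=\frac{2j(x^n)-1}{2\,m_{\type{x^n}}}\in[0,1)$ and $f_n:=G_Y^{-1}\circ h_n$, where $G_Y^{-1}$ is the quantile function of $Q_Y$ as in Proposition \ref{prop:NUcontinuous} (and $f_n$ is set arbitrarily off $\mathcal{X}^n$; measurability is automatic since $\mathcal{X}^n$ is countable). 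This $f_n$ is independent of $P_X$, hence universal. The key structural fact is that the conditional law of $X^n$ given $X^n\in T_{\tau}$ is uniform on $T_{\tau}$ for every $P_X$, so $h_n$ spreads each type class evenly over $[0,1)$; writing $F$ for the CDF of $P_{X^n}\circ h_n^{-1}$, a count over type classes gives, for $t\in[0,1]$,
\[
|F(t)-t|=\Bigl|\textstyle\sum_{l}\tfrac{P_X^n(T_{\tau^{(l)}})}{m_{\tau^{(l)}}}\bigl\lfloor t\,m_{\tau^{(l)}}+\tfrac12\bigr\rfloor-\sum_l P_X^n(T_{\tau^{(l)}})\,t\Bigr|\le\tfrac12\textstyle\sum_{\tau}\tfrac{P_X^n(T_{\tau})}{m_{\tau}}=\tfrac12\textstyle\sum_{\tau}\prod_x P_X(x)^{n\tau(x)},
\]
using $\sum_{\tau}P_X^n(T_{\tau})=1$ and $\lfloor u+\tfrac12\rfloor\in(u-\tfrac12,u+\tfrac12]$. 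Since $G_Y^{-1}$ is non-decreasing and pushes $\mathrm{Unif}[0,1]$ forward to $Q_Y$, the data-processing property of the KS distance under non-decreasing maps (the computation in the proof of Lemma \ref{lem:continuousisharder}) then yields $|P_{Y_n}-Q_Y|_{\mathrm{KS}}\le\tfrac12\sum_{\tau}\prod_x P_X(x)^{n\tau(x)}=:g'_n(P_X)$, and taking the constant sequence $f_{n,k}:=f_n$ shows $g'_n\in\mathcal{E}_{\mathrm{KS}}(\mathcal{P}_X^{(n)},Q_Y)$.

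The remaining step — bounding $g'_n(P_X)$ — is the one I expect to be the main obstacle. When $\mathcal{X}$ is finite it is trivial (at most $(n+1)^{|\mathcal{X}|}$ types, each summand $\le p_{\max}^n$), so the real content is the countably infinite case, where there are infinitely many types and a naive term-by-term bound blows up exponentially. I would identify a type with a vector $(a_x)_x$ of non-negative integers with $\sum_x a_x=n$, factor out $p_{\max}^n$ as $\prod_x P_X(x)^{a_x}=p_{\max}^n\prod_x r_x^{a_x}$ with $r_x:=P_X(x)/p_{\max}\le 1$ and $\sum_x r_x=1/p_{\max}$, and group the sum by the support $S=\{x:a_x\ge 1\}$: for $|S|=s$ there are at most $\binom ns$ admissible $(a_x)_{x\in S}$, each with $\prod_{x\in S}r_x^{a_x}\le\prod_{x\in S}r_x$, while $\sum_{|S|=s}\prod_{x\in S}r_x$ is the elementary symmetric function $e_s\bigl((r_x)_x\bigr)\le\frac1{s!}\bigl(\sum_x r_x\bigr)^s=\frac1{s!}p_{\max}^{-s}$. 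This gives
\[
g'_n(P_X)\le\tfrac12\,p_{\max}^n\sum_{s=0}^{n}\frac{(n/p_{\max})^s}{(s!)^2}\le\tfrac12\,p_{\max}^n\,e^{2\sqrt{n/p_{\max}}},
\]
the last step because $\sum_s(v^s/s!)^2\le\bigl(\max_t v^t/t!\bigr)\sum_s v^s/s!\le e^{2v}$ with $v=\sqrt{n/p_{\max}}$. Since $e^{2\sqrt{n/p_{\max}}}=e^{o(n)}$, this yields $g'_n(P_X)\dotle p_{\max}^n$, hence $\mathcal{E}_{\mathrm{KS}}(\mathcal{P}_X^{(n)},Q_Y)\dotpreceq p_{\max}^n$; together with the converse this gives $\mathcal{E}_{\mathrm{KS}}(\mathcal{P}_X^{(n)},Q_Y)\dotasymp(\max_x P_X(x))^n$.
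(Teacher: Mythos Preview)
Your proof is correct. Both you and the paper use the same skeleton: the converse is the non-universal lower bound from Corollary~\ref{cor:NUdiscontinuous}, and achievability is via a type-based simulator exploiting that $X^n$ is conditionally uniform on each type class, leading to the same key quantity $\tfrac12\sum_\tau\prod_x P_X(x)^{n\tau(x)}$.

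The genuine difference is in the countably-infinite-alphabet step. The paper does \emph{not} try to control the infinite sum over types directly; instead it truncates $\mathcal{X}$ to a finite alphabet of size $2k+1$, applies the finite-alphabet bound (with its polynomial prefactor $(n+1)^{|\mathcal{X}|}$), and then lets $k\to\infty$, relying on the $\limsup_{k\to\infty}$ in the definition of achievability to absorb the truncation. Your approach instead bounds the full sum over types at once via the elementary-symmetric-function estimate, obtaining the explicit inequality $g_n'(P_X)\le\tfrac12\,p_{\max}^n\,e^{2\sqrt{n/p_{\max}}}$. This buys you two things the paper's argument does not: a \emph{single} universal simulator $f_n$ per $n$ (rather than a sequence $f_{n,k}$), and a clean non-asymptotic bound that is uniform over all countable $\mathcal{X}$ with no dependence on $|\mathcal{X}|$. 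The paper's route is shorter to write down (it recycles the finite case wholesale), but yours gives a sharper and structurally stronger conclusion.
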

\begin{proof}
We first consider the case in which $\mathcal{X}$ is a finite set.
We use a type-based mapping scheme to prove Theorem \ref{thm:Udiscrete}.
Here we adopt the notation from \cite{Csiszar}. We use $T_{x^{n}}\left(x\right):=\frac{1}{n}\sum_{i=1}^{n}1\left\{ x_{i}=x\right\} $
to denote the type (empirical distribution) of a sequence $x^{n}$,
and $T_{X}$ to denote a type of sequences in $\mathcal{X}^{n}$,
where the indicator function $1\{A\}$ equals $1$ if the clause $A$
is true and $0$ otherwise. For a type $T_{X}$, the type class (set
of sequences having the same type $T_{X}$) is denoted by $\mathcal{T}_{T_{X}}$.
The set of types of sequences in $\mathcal{X}^{n}$ is denoted as
$\mathcal{P}_{n}\left(\mathcal{X}\right):=\left\{ T_{x^{n}}:x^{n}\in\mathcal{X}^{n}\right\} $.
It has been shown that $\left|\mathcal{P}_{n}\left(\mathcal{X}\right)\right|\leq\left(n+1\right)^{\left|\mathcal{X}\right|}$
in \cite{Csiszar}.

For any i.i.d. $X^{n}$, all sequences in a type class have a equal
probability. That is, under the condition $X^{n}\in\mathcal{T}_{T_{X}}$,
it is uniformly distributed over the type class $\mathcal{T}_{T_{X}}$,
regardless of $P_{X}$. Now we construct a mapping $f$ that maps
the uniform random vector on $\mathcal{T}_{T_{X}}$ to a random variable
such that $\sup_{y\in\mathbb{R}}\left|F_{Y}(y|T_{X})-G_{Y}(y)\right|$
is minimized. Here $F_{Y}(y|T_{X})$ denotes the CDF of the output
random variable for the type $T_{X}$. Since the probability values
of uniform random vectors are all equal to $\left|\mathcal{T}_{T_{X}}\right|^{-1}$,
$\sup_{y\in\mathbb{R}}\left|F_{Y}(y|T_{X})-G_{Y}(y)\right|=\frac{1}{2}\left|\mathcal{T}_{T_{X}}\right|^{-1}$.
Therefore, the output distribution induced by $f$ is 
\begin{align}
F_{Y}(y) & =\sum_{T_{X}}\sum_{x^{n}\in\mathcal{T}_{T_{X}}}P_{X}^{n}(x^{n})1\{f(x^{n})\leq y\}\label{eq:-20}\\
 & =\sum_{T_{X}}P_{X}^{n}(\mathcal{T}_{T_{X}})\sum_{x^{n}\in\mathcal{T}_{T_{X}}}\frac{P_{X}^{n}(x^{n})}{P_{X}^{n}(\mathcal{T}_{T_{X}})}1\{f(x^{n})\leq y\}\nonumber \\
 & =\sum_{T_{X}}P_{X}^{n}(\mathcal{T}_{T_{X}})\sum_{x^{n}\in\mathcal{T}_{T_{X}}}\frac{1}{\left|\mathcal{T}_{T_{X}}\right|}1\{f(x^{n})\leq y\}\nonumber \\
 & =\sum_{T_{X}}P_{X}^{n}(\mathcal{T}_{T_{X}})F_{Y}(y|T_{X})\nonumber \\
 & \in\sum_{T_{X}}P_{X}^{n}(\mathcal{T}_{T_{X}})\left(G_{Y}(y)+\left[-\frac{1}{2}\left|\mathcal{T}_{T_{X}}\right|^{-1},\frac{1}{2}\left|\mathcal{T}_{T_{X}}\right|^{-1}\right]\right)\nonumber \\
 & =G_{Y}(y)+\sum_{T_{X}}P_{X}^{n}(\mathcal{T}_{T_{X}})\left[-\frac{1}{2}\left|\mathcal{T}_{T_{X}}\right|^{-1},\frac{1}{2}\left|\mathcal{T}_{T_{X}}\right|^{-1}\right].\nonumber 
\end{align}
Using this equation we obtain 
\begin{align}
\left|F_{Y}(y)-G_{Y}(y)\right| & \leq\frac{1}{2}\sum_{T_{X}}P_{X}^{n}(\mathcal{T}_{T_{X}})\left|\mathcal{T}_{T_{X}}\right|^{-1}\nonumber \\
 & =\frac{1}{2}\sum_{T_{X}}e^{n\sum_{x}T_{X}(x)\log P_{X}(x)}\nonumber \\
 & \leq\frac{1}{2}\left(n+1\right)^{\left|\mathcal{X}\right|}\max_{T_{X}}e^{n\sum_{x}T_{X}(x)\log P_{X}(x)}\nonumber \\
 & \leq\frac{1}{2}e^{n\left(\log\max_{x}P_{X}(x)+\left|\mathcal{X}\right|\frac{\log\left(n+1\right)}{n}\right)}\label{eq:-30}\\
 & \doteq e^{n\log\max_{x}P_{X}(x)}\nonumber \\
 & =\left(\max_{x}P_{X}(x)\right)^{n}\label{eq:-21}
\end{align}

We next consider the case in which $\mathcal{X}$ is countably infinite.
For brevity, we assume $\mathcal{X}=\mathbb{Z}$. We partition $\mathbb{Z}$
into $2k+1$ intervals\footnote{Sometimes, we use $[a:b]$ to denote $\mathbb{Z}\cap[a,b]$. }
$\mathcal{U}_{-k}:=[-\infty:-k]$, $\mathcal{U}_{-\left(k-1\right)}:=\left\{ -\left(k-1\right)\right\} $,
..., $\mathcal{U}_{k-1}:=\left\{ k-1\right\} $, $\mathcal{U}_{k}:=[k:\infty]$.
Denote $Z_{k}=f_{1,k}(X)\in\mathcal{Z}:=[-k:k]$ as the index that
$X\in\mathcal{U}_{Z_{k}}$. Hence $P_{Z_{k}}$ is defined on the finite
set $\mathcal{Z}$. Now we use $Z_{k}$ to simulate $Y\sim Q_{Y}$.
By the derivation above, we have that there exists a universal mapping
$Y_{k}=f_{2,k}(Z_{k}):\mathcal{Z}\to\mathcal{Y}$ such that $\left|P_{Y_{k}}-Q_{Y}\right|_{\mathrm{KS}}\doteq\left(\max_{z}P_{Z_{k}}(z)\right)^{n}$.
Furthermore, as $k\to\infty$, $\max_{z}P_{Z_{k}}(z)\to\max_{x}P_{X}(x)$.
Therefore, the universal mappings $f_{2,k}\circ f_{1,k},k\in\mathbb{Z}$
satisfy $\limsup_{k\to\infty}\left|P_{Y_{k}}-Q_{Y}\right|_{\mathrm{KS}}\dotle\left(\max_{x}P_{X}(x)\right)^{n}$.

The converse part follows from Theorem \ref{prop:NUdiscrete}, since
even non-universal simulation cannot make the approximation error
decay faster than $\left(\max_{x}P_{X}(x)\right)^{n}$, hence universal
simulation cannot as well. 
\end{proof}
Now we consider a discontinuous $P_{X}$. We partition the real line
into intervals $\mathcal{U}_{k}:=((k-1)\Delta,k\Delta],k\in\mathbb{Z}$
with $\Delta=\frac{1}{\sqrt{k}}$. Denote $Z_{k}=f_{1,k}(X)\in\mathbb{Z}$
as the index that $X\in\mathcal{U}_{Z_{k}}$. Hence $P_{Z_{k}}$ is
defined on the set $\mathbb{Z}$. Now we use $Z_{k}$ to simulate
$Y\sim Q_{Y}$. By Theorem \ref{thm:Udiscrete}, we have that there
exists a universal mapping $Y_{k}=f_{2,k}(Z_{k}):\mathcal{Z}\to\mathcal{Y}$
such that $\left|P_{Y_{k}}-Q_{Y}\right|_{\mathrm{KS}}\doteq\left(\max_{z}P_{Z_{k}}(z)\right)^{n}$.
Furthermore, as $k\to\infty$, we have $\max_{z}P_{Z_{k}}(z)\to\max_{x}P_{X}(x)$.
Therefore, the universal mappings $f_{2,k}\circ f_{1,k},k\in\mathbb{Z}$
satisfy $\limsup_{k\to\infty}\left|P_{Y_{k}}-Q_{Y}\right|_{\mathrm{KS}}\dotle\left(\max_{x}P_{X}(x)\right)^{n}$.
Therefore, we have the following result. 
\begin{cor}
\label{cor:Udiscontinuous} Assume $\mathcal{P}_{X}=\mathcal{P}_{\mathrm{dc}}$
and $Q_{Y}$ is continuous. Then for the universal $(\mathcal{P}_{X}^{(n)},Q_{Y})$-simulation
problem, $\mathcal{E}_{\mathrm{KS}}(\mathcal{P}_{X}^{(n)},Q_{Y})\dotasymp\left(\max_{x}P_{X}(x)\right)^{n}$. 
\end{cor}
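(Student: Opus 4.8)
The relation $\dotasymp$ unpacks into the two one-sided statements $\mathcal{E}_{\mathrm{KS}}(\mathcal{P}_X^{(n)},Q_Y)\dotsucceq(\max_x P_X(x))^n$ and $\mathcal{E}_{\mathrm{KS}}(\mathcal{P}_X^{(n)},Q_Y)\dotpreceq(\max_x P_X(x))^n$, and I would prove them separately. The \emph{converse} needs no new work: the non-universal problem is the universal one with $\mathcal{P}_{X^n}=\{P_{X^n}\}$, so $\mathcal{E}_{\mathrm{KS}}(\mathcal{P}_X^{(n)},Q_Y)\succeq E_{\mathrm{KS}}(P_X^n,Q_Y)$ for each $P_X\in\mathcal P_{\mathrm{dc}}$; since $P_X$ is discontinuous and $Q_Y$ is continuous, Corollary \ref{cor:NUdiscontinuous} evaluates the right-hand side exactly as $\tfrac12(\max_x P_X(x))^n$, whence every achievable $g'$ obeys $g'(P_X)\ge\tfrac12(\max_x P_X(x))^n$, i.e. $\mathcal{E}_{\mathrm{KS}}(\mathcal{P}_X^{(n)},Q_Y)\dotsucceq(\max_x P_X(x))^n$.

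For \emph{achievability} the plan is a quantize-then-simulate construction. For a mesh $\Delta>0$, let $f_{1,\Delta}:\mathbb R\to\mathbb Z$ map $x$ to the index of the cell $((j-1)\Delta,j\Delta]$ containing it; applied to each coordinate of the i.i.d.\ seed, this turns $X^n\sim P_X^n$ into an i.i.d.\ discrete vector $Z^n\sim P_{Z_\Delta}^n$, so the product structure — hence the $n$-th power in the rate — is preserved. Composing with the universal discrete simulator of Theorem \ref{thm:Udiscrete} (applicable since $Q_Y$ is continuous) gives, for fixed $\Delta$ and $n\to\infty$, KS-error $\dotle(\max_z P_{Z_\Delta}(z))^n$. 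One then lets $\Delta\to0$ and shows $\max_z P_{Z_\Delta}(z)\to\max_x P_X(x)$: the bound ``$\ge$'' is clear because the cell carrying the heaviest atom has at least that much mass; for ``$\le$'', split $P_X=P_X^{\mathrm{at}}+P_X^{\mathrm{cont}}$, so the mass of any length-$\Delta$ interval is at most $\max_x P_X(x)$ plus $\sup\{P_X^{\mathrm{cont}}(I):|I|\le\Delta\}$, and this last supremum tends to $0$ as $\Delta\to0$ because a finite non-atomic Borel measure on $\mathbb R$ is uniformly non-atomic (truncate to a compact interval capturing all but $\veps$ of the mass, then invoke uniform continuity of the CDF of $P_X^{\mathrm{cont}}$ there).

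The genuinely delicate point — and the one I expect to be the main obstacle — is the order in which the limits in $\Delta$ (equivalently, in the simulator index $k$) and in $n$ are taken. The per-$\Delta$ bound coming from the method of types carries a polynomial factor $(n+1)^{|\mathcal X_\Delta|}$ whose exponent $|\mathcal X_\Delta|$ blows up as $\Delta\to0$, so one cannot send $\Delta\to0$ before or simultaneously with $n$. The fix is to argue at a fixed accuracy first: given $\delta>0$, reduce to a \emph{finite} effective alphabet of size depending only on $\delta$ (a compact truncation of $P_X$ with a finite mesh on it, the two tails lumped into two extra symbols), so that $\max_z P_Z(z)\le\max_x P_X(x)+\delta$ and the now-fixed $\poly(n)$ factor is absorbed by $\dotle$, yielding KS-error $\dotle(\max_x P_X(x)+\delta)^n$; then let $\delta=\delta_k\downarrow0$ along the simulator index $k$. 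Checking that the function thus exhibited is a legitimate $\dotpreceq$-witness in the sense of the definitions (in particular that the $\limsup_{k\to\infty}$ in the definition of achievability behaves as claimed) is the bookkeeping that requires care; the uniform-non-atomicity fact and the finite-alphabet reduction are routine. Combining the two directions gives $\mathcal{E}_{\mathrm{KS}}(\mathcal{P}_X^{(n)},Q_Y)\dotasymp(\max_x P_X(x))^n$.
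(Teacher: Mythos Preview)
Your approach---quantize the seed to a discrete alphabet, invoke Theorem~\ref{thm:Udiscrete}, then refine the mesh---is exactly the paper's, and your converse is identical to the paper's. You also correctly isolate the delicate point that the paper glosses over: the type-counting prefactor $(n+1)^{|\mathcal X_\Delta|}$ explodes if the mesh is sent to $0$ at fixed $n$.

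There is, however, a gap in your proposed fix. You claim that for each $\delta>0$ one can pick a \emph{universal} finite quantization (alphabet size depending only on $\delta$) with $\max_z P_Z(z)\le\max_x P_X(x)+\delta$ for \emph{all} $P_X\in\mathcal P_{\mathrm{dc}}$. This is false: take $P_X=\tfrac12\delta_0+\tfrac12\mathrm{Unif}[0,\epsilon]$; once $\epsilon$ is smaller than your mesh, the cell containing $0$ has mass $1$, so $\max_z P_Z(z)-\max_x P_X(x)\ge\tfrac12$ regardless of how you chose the mesh from $\delta$ alone. Consequently the scheme ``let $\delta_k\downarrow0$ along the simulator index $k$'' does not yield a usable bound on $\limsup_k$ of the error for fixed $n$. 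The clean repair---which the paper's framework permits because the achievable witness $g_n'$ is allowed to vary with $n$---is to tie the quantization level to $n$ rather than to $k$: for each $n$ use a \emph{single} simulator with mesh $\Delta_n\to0$ and truncation $M_n\to\infty$ chosen so that the alphabet size $N_n=o(n/\log n)$ (e.g.\ $N_n\asymp n^{1/4}$). Then $g_n'(P_X):=\tfrac12(n+1)^{N_n}(\max_z P_{Z_n}(z))^n$ is achievable (by a constant-in-$k$ sequence) and, since $\max_z P_{Z_n}(z)\to\max_x P_X(x)$ pointwise in $P_X$, one gets $g_n'(P_X)\dotle(\max_x P_X(x))^n$ for every $P_X$. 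The paper's own proof sidesteps this by quantizing to the countable alphabet $\mathbb Z$ and citing Theorem~\ref{thm:Udiscrete} as a black box (whose countable-alphabet case absorbs the same truncate-and-refine step), so your exposition is in fact more explicit than the paper's---you just need to redirect the coupling from the $k$-index to $n$.
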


\subsection{Continuous but not absolutely continuous seed distributions}

Next we consider continuous but not absolutely continuous $P_{X}$.
For this case, we have $\max_{x}P_{X}(x)=0$. Hence the approximation
error decays sup-exponentially fast. To provide a better bound, we
assume $F_{X}$ is Hölder continuous with exponent $\alpha$, where
$0<\alpha\leq1$. That is, $L=\sup_{x_{1}\neq x_{2}}\frac{F_{X}(x_{2})-F_{X}(x_{1})}{\left(x_{2}-x_{1}\right)^{\alpha}}$
is finite. Consider the following mapping. We partition the real line
into $2k+2$ intervals $U_{-k}:=(-\infty,-k\Delta]$, $U_{-\left(k-1\right)}:=(-k\Delta,-\left(k-1\right)\Delta]$,
..., $U_{k}:=((k-1)\Delta,k\Delta]$, $U_{k+1}:=(k\Delta,\infty)$.
Denote $Z=f_{1}(X)\in\mathcal{Z}:=[-k:k+1]$ as the index that $X\in U_{Z}$.
Now we use $Z$ to simulate $Y\sim Q_{Y}$. By the derivation till
\eqref{eq:-30}, we have that there exists a universal mapping $Y=f_{2}(Z):\mathcal{Z}\to\mathcal{Y}$
such that $\left|P_{Y}-Q_{Y}\right|_{\mathrm{KS}}\leq\frac{1}{2}e^{n\left(\log\max_{z}P_{Z}(z)+\left(2k+2\right)\frac{\log\left(n+1\right)}{n}\right)}$.
Set $\Delta=\frac{\log n}{n},k=\frac{n}{\sqrt{\log n}}$. Then $\Delta\to0$
and $k\Delta=\sqrt{\log n}\to\infty$. Since $F_{X}$ is Hölder continuous
with exponent $\alpha$, we have $\max_{z}P_{Z}(z)\leq\max_{x}\left\{ F_{X}(x+\Delta)-F_{X}(x)\right\} \leq L\Delta^{\alpha}$.
Hence the universal mapping $Y_{n}=f_{2}\circ f_{1}(X^{n})$ satisfies
$\left|P_{Y_{n}}-Q_{Y}\right|_{\mathrm{KS}}=e^{-\alpha\Omega\left(n\log n\right)}$.
Therefore, we have the following result. 
\begin{cor}
\label{cor:Uc-ac} Assume $\mathcal{P}_{X}=\mathcal{P}_{\mathrm{c}}\backslash\mathcal{P}_{\mathrm{ac}}$
and $Q_{Y}$ is arbitrary. Then for the universal $(\mathcal{P}_{X}^{(n)},Q_{Y})$-simulation
problem, $\mathcal{E}_{\mathrm{KS}}(\mathcal{P}_{X}^{(n)},Q_{Y})\asymp e^{-\omega\left(n\right)}$.
That is, there exists a sequence of simulators such that $\left|P_{Y^{(n)}}-Q_{Y}\right|_{\mathrm{KS}}$
decays sup-exponentially fast as $n\to\infty$ for any $P_{X}$. Moreover,
if $\mathcal{P}_{X}=\left\{ P_{X}:\:F_{X}\textrm{ is Hölder continuous with exponent }\alpha\right\} $
with $0<\alpha\leq1$, then $\mathcal{E}_{\mathrm{KS}}(\mathcal{P}_{X}^{(n)},Q_{Y})\asymp e^{-\alpha\Omega\left(n\log n\right)}$. 
\end{cor}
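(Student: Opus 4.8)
The plan is to re-use the type-based quantization scheme from the proof of Theorem~\ref{thm:Udiscrete}, applied to a progressively finer partition of $\mathbb{R}$, and then to optimize the partition parameters as functions of the block length $n$. By Lemma~\ref{lem:continuousisharder} it suffices to treat continuous $Q_{Y}$ (a general $Q_{Y}$ is then obtained by first simulating a continuous distribution and post-processing). For each $n$, partition $\mathbb{R}$ into $m=2k+2$ cells: a window $(-k\Delta,k\Delta]$ cut into $2k$ intervals of length $\Delta$, plus the two tails $(-\infty,-k\Delta]$ and $(k\Delta,\infty)$. Writing $Z=f_{1}(X)$ for the index of the cell containing $X$, the vector $Z^{n}$ is i.i.d.\ on an alphabet of size $m$ with $P_{Z}(j)=P_{X}(U_{j})$; feeding $Z^{n}$ into the type-based universal mapping of Theorem~\ref{thm:Udiscrete} (which does not use $P_{Z}$) and composing with $f_{1}$ produces a single simulator $f_{n}=f_{2}\circ f_{1}:\mathbb{R}^{n}\to\mathbb{R}$, independent of $P_{X}$, for which the derivation through \eqref{eq:-30} (with the cell-index set of size $m$ in place of $\mathcal{X}$ and $P_{Z}$ in place of $P_{X}$) gives
\[
\bigl|P_{Y_{n}}-Q_{Y}\bigr|_{\mathrm{KS}}\;\le\;\tfrac{1}{2}\exp\!\Bigl(n\bigl(\log\max_{j}P_{Z}(j)+m\tfrac{\log(n+1)}{n}\bigr)\Bigr).
\]
Taking $f_{n,k}:=f_{n}$ for all $k$ in the definition of achievability, the required $\limsup_{k}$ equals this value.

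For the general class $\mathcal{P}_{X}=\mathcal{P}_{\mathrm{c}}\backslash\mathcal{P}_{\mathrm{ac}}$ I would choose $\Delta_{n}\to0$, $k_{n}$ with $k_{n}\Delta_{n}\to\infty$ and $m_{n}\tfrac{\log(n+1)}{n}\to0$, e.g.\ $\Delta_{n}=1/\log n$ and $k_{n}=\lceil n/(\log n)^{2}\rceil$. Since the interior cells contribute $P_{Z}(j)\le\omega_{F_{X}}(\Delta_{n})$ and the two tail cells contribute $F_{X}(-k_{n}\Delta_{n})$ and $1-F_{X}(k_{n}\Delta_{n})$, where $\omega_{F_{X}}$ is the modulus of continuity of $F_{X}$, and since every continuous CDF is uniformly continuous on $\mathbb{R}$, we get $\max_{j}P_{Z}(j)\to0$ for each continuous $P_{X}$. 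Hence $\tfrac{1}{n}\log\bigl|P_{Y_{n}}-Q_{Y}\bigr|_{\mathrm{KS}}\to-\infty$, i.e.\ the KS-error decays super-exponentially; this is exactly $\mathcal{E}_{\mathrm{KS}}(\mathcal{P}_{X}^{(n)},Q_{Y})\asymp e^{-\omega(n)}$, which by the paper's notation is an achievability (existence) statement requiring no converse.

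For the H\"older refinement I would instead take $\Delta_{n}$ polynomially small, say $\Delta_{n}=(\log n)/n$ and $k_{n}=\lfloor n/\sqrt{\log n}\rfloor$, so $\Delta_{n}\to0$, $k_{n}\Delta_{n}=\sqrt{\log n}\to\infty$ and $m_{n}\tfrac{\log(n+1)}{n}=O(\sqrt{\log n})$. H\"older continuity with exponent $\alpha$ and constant $L$ gives $\max_{j}P_{Z}(j)\le L\Delta_{n}^{\alpha}$ on the interior cells (the tail cells contribute no more, their mass vanishing once $k_{n}\Delta_{n}$ exceeds the support), so $\log\max_{j}P_{Z}(j)\le-\alpha\log n+\alpha\log\log n+\log L$, making the exponent above $-\alpha\Omega(n\log n)$ and yielding $\mathcal{E}_{\mathrm{KS}}(\mathcal{P}_{X}^{(n)},Q_{Y})\asymp e^{-\alpha\Omega(n\log n)}$. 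The only genuine difficulty is this three-way balancing of parameters: $\Delta_{n}\to0$ is forced by the interior cells, $k_{n}\Delta_{n}\to\infty$ by the tail cells, and $m_{n}$ small by the $(n+1)^{m_{n}}$ type-count factor, and these must hold simultaneously; moreover for the H\"older rate one needs $\log\max_{j}P_{Z}(j)$ of order $-\log n$ rather than merely tending to $-\infty$. Everything else is the type-counting estimate already carried out for Theorem~\ref{thm:Udiscrete}.
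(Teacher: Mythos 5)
Your proposal follows the paper's strategy almost exactly: quantize $\mathbb{R}$ into a finite window of $2k_n$ width-$\Delta_n$ cells plus two tail cells, feed the resulting i.i.d.\ finite-alphabet index process into the type-based universal mapping (using the bound through \eqref{eq:-30}), and then optimize $(\Delta_n, k_n)$ as functions of $n$. The parameter choices for the H\"older case ($\Delta_n = (\log n)/n$, $k_n = n/\sqrt{\log n}$) coincide with the paper's. Your treatment of the general continuous case is actually more explicit than the paper's one-line remark that ``$\max_x P_X(x)=0$ hence super-exponential decay'': you supply concrete $(\Delta_n, k_n)$, invoke uniform continuity of a continuous CDF on $\mathbb{R}$ (correct, via monotonicity and bounded range), and show $\log \max_j P_Z(j)\to -\infty$ dominates the $m_n\log(n+1)/n\to 0$ type-count term. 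That part is complete.

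One caveat on the H\"older refinement, which you flag in passing but do not resolve. Your parenthetical ``the tail cells contribute no more, their mass vanishing once $k_n\Delta_n$ exceeds the support'' is only valid when $P_X$ has compact support. For an unbounded-support $P_X$ with a H\"older (even Lipschitz) CDF but slowly decaying tails, the tail masses $F_X(-k_n\Delta_n)$ and $1-F_X(k_n\Delta_n)$ are \emph{not} bounded by $L\Delta_n^{\alpha}$; H\"older continuity controls increments of $F_X$ over short intervals, not the mass of half-lines. The paper itself silently makes the same leap when it writes $\max_z P_Z(z)\le \max_x\{F_X(x+\Delta)-F_X(x)\}\le L\Delta^{\alpha}$, since the two tail cells are not of the form $F_X(x+\Delta)-F_X(x)$. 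So this is a shared gap rather than a new one you introduced, but a fully rigorous proof of the H\"older rate would need either a compact-support hypothesis, a tail-decay hypothesis, or a different handling of the tail cells (e.g., splitting the tail error off and showing it is $e^{-\omega(n)}$ for any continuous CDF via the first argument, while the interior cells give the $e^{-\alpha\Omega(n\log n)}$ rate).
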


\section{Simulating a Random Variable from a Markov Process}

In the preceding sections, we consider simulation of a random variable
from a stationary memoryless process. Next we extend Theorem \ref{thm:Udiscrete}
to Markov processes of order $k\geq1$.
\begin{defn}
Given a Markov chain $\boldsymbol{X}=\left\{ X_{n}:n\in\mathbb{N}\right\} $
of order $k\geq1$ with finite state space $\mathcal{X}=\left\{ 1,2,...,|\mathcal{X}|\right\} $,
initial state $x_{-k+1}^{0}:=(x_{-k+1},x_{-k+2},...,x_{0})$, and
transition probability $P_{X_{k+1}|X^{k}}$, the min-entropy ($\infty$-order
Rényi entropy) rate of $\boldsymbol{X}$ is defined as\footnote{The existence of the limit is guaranteed by Fekete's subadditive lemma.}
\[
H_{\text{\ensuremath{\infty}}}(\boldsymbol{X})=-\lim_{n\to\infty}\frac{1}{n}\max_{x^{n}}\log P(x^{n}).
\]
\end{defn}
Since the distribution of $X^{n}$ is determined by the initial state
$x_{-k+1}^{0}$ and transition probability $P_{X_{k+1}|X^{k}}$, hence
sometimes we also use $H_{\text{\ensuremath{\infty}}}(x_{-k+1}^{0},P_{X_{k+1}|X^{k}})$
to denote $H_{\text{\ensuremath{\infty}}}(\boldsymbol{X})$.

Given a state space $\mathcal{X}$ of any Markov chain $\left\{ X_{n}:n\in\mathbb{N}\right\} $
of order $k=1$ with transition probability $P_{X_{k+1}|X^{k}}$,
a loop is a sequence of distinct states of the chain $(i_{1},i_{2},...,i_{l})$
with $l\ge1$ such that $P_{i_{s},i_{s+1}}>0$ for $s=1,2,...,l$
where $i_{l+1}=i_{1}$. (If $P_{i,i}>0$, then $(i)$ is a loop.)
The set of all loops of length $l$ is denoted by $\mathcal{C}_{l}(P)$.

Let $P$ be the transition matrix of an ergodic Markov chain of order
$k=1$ on a \emph{finite} alphabet $\mathcal{X}$. The min-entropy
rate of this Markov chain is given by \cite{kamath2016estimation}
\[
H_{\text{\ensuremath{\infty}}}(\boldsymbol{X})=\min_{1\leq l\le|\mathcal{X}|}\min\frac{1}{l}\sum_{s=1}^{l}\imath_{X_{2}|X_{1}}(i_{s+1}|i_{s})=\min_{1\leq l\le|\mathcal{X}|}\min\frac{1}{l}\sum_{s=1}^{l}\log\frac{1}{p_{i_{s+1}i_{s}}},
\]
where the inner minimum is taken over all loops $(i_{1},i_{2},...,i_{l})\in\mathcal{C}_{l}(P)$,
and $\imath_{X_{2}|X_{1}}(j|i):=\log\frac{1}{P_{ij}}$.

\subsection{Non-universal Simulation from a Markov Process}

As a direct consequence of Proposition \ref{prop:NUcontinuous}, we
can obtain the approximation error for non-universal simulation from
a Markov process. 
\begin{cor}
\label{cor:NUMarkov}Assume $\boldsymbol{X}=\left\{ X_{n}:n\in\mathbb{N}\right\} $
is a Markov chain of order $k$ with finite state space $\mathcal{X}$,
initial state $x_{-k+1}^{0}$, and transition probability $P_{X_{k+1}|X^{k}}$,
and $Q_{Y}$ is a continuous distribution. Then for the non-universal
$(P_{X^{n}},Q_{Y})$-simulation problem, $E_{\mathrm{KS}}(P_{X^{n}},Q_{Y})\doteq e^{-nH_{\text{\ensuremath{\infty}}}(x_{-k+1}^{0},P_{X_{k+1}|X^{k}})}$. 
\end{cor}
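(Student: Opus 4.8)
The plan is to deduce the result from Proposition~\ref{prop:NUdiscontinuous}, exactly as in the proof of Corollary~\ref{cor:NUdiscontinuous}, and then to read off the exponent from the definition of the min-entropy rate. Since $\mathcal{X}$ is finite, $X^{n}$ takes values in the finite set $\mathcal{X}^{n}\subseteq\mathbb{R}^{n}$, so $P_{X^{n}}$ is purely atomic, in particular discontinuous, with atom masses $P(x^{n})$, $x^{n}\in\mathcal{X}^{n}$. First I would collapse the vector seed to a scalar one: fix a measurable $\phi:\mathbb{R}^{n}\to\mathbb{R}$ that is injective on $\mathcal{X}^{n}$ (one exists since $\mathcal{X}^{n}$ is finite) and set $\widetilde{X}:=\phi(X^{n})$, a discrete real-valued variable with $P_{\widetilde{X}}(\phi(x^{n}))=P(x^{n})$. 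Since $\phi$ is invertible on $\supp P_{X^{n}}$, the simulators $f_{n}:\mathbb{R}^{n}\to\mathbb{R}$ and $g:\mathbb{R}\to\mathbb{R}$ are interchangeable via $g\mapsto g\circ\phi$ and $f_{n}\mapsto f_{n}\circ\phi^{-1}$ (extended by a constant off the countable Borel set $\phi(\mathcal{X}^{n})$), so $P_{X^{n}}\circ f_{n}^{-1}$ and $P_{\widetilde{X}}\circ g^{-1}$ run over the same family of output distributions; hence $E_{\mathrm{KS}}(P_{X^{n}},Q_{Y})=E_{\mathrm{KS}}(P_{\widetilde{X}},Q_{Y})$.

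Now $P_{\widetilde{X}}$ is discontinuous and $Q_{Y}$ is continuous, so Proposition~\ref{prop:NUdiscontinuous} gives the exact, non-asymptotic identity
\[
E_{\mathrm{KS}}(P_{X^{n}},Q_{Y})=\tfrac{1}{2}\max_{z}P_{\widetilde{X}}(z)=\tfrac{1}{2}\max_{x^{n}}P(x^{n})=\tfrac{1}{2}\,e^{\,\max_{x^{n}}\log P(x^{n})}.
\]
This is the sense in which the corollary is a direct consequence of the inverse-transform construction of Proposition~\ref{prop:NUcontinuous}: the optimal scheme sends the heaviest atom to the point whose $Q_{Y}$-quantile straddles its mass symmetrically, so that its two-sided KS error equals exactly half the atom's mass, and this is unavoidable because any simulator maps the most likely $x^{n}$ to a single output atom of mass at least $\max_{x^{n}}P(x^{n})$.

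It remains to convert this into a rate. Writing $H_{\infty}:=H_{\infty}(x_{-k+1}^{0},P_{X_{k+1}|X^{k}})$, the definition of the min-entropy rate gives $\frac{1}{n}\max_{x^{n}}\log P(x^{n})\to-H_{\infty}$ (the limit existing by Fekete's lemma, as noted in the paper). Hence
\[
\frac{1}{n}\log\frac{E_{\mathrm{KS}}(P_{X^{n}},Q_{Y})}{e^{-nH_{\infty}}}=\frac{\log(1/2)}{n}+\frac{1}{n}\max_{x^{n}}\log P(x^{n})+H_{\infty}\longrightarrow 0,
\]
which is precisely $E_{\mathrm{KS}}(P_{X^{n}},Q_{Y})\doteq e^{-nH_{\infty}(x_{-k+1}^{0},P_{X_{k+1}|X^{k}})}$.

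I do not expect any genuinely hard step. The two points that deserve a word of care are the reduction to the scalar case in the first paragraph (verifying that the $\phi$-encoding neither creates nor destroys achievable output distributions), and the observation that the constant $\tfrac{1}{2}$ and the sub-exponential factor $e^{o(n)}$ hidden in $\max_{x^{n}}\log P(x^{n})=-nH_{\infty}+o(n)$ are both invisible to the relation $\doteq$. Everything else is Proposition~\ref{prop:NUdiscontinuous} together with the definition of the min-entropy rate (supplemented, if one wants $H_{\infty}$ in closed form for an ergodic order-$1$ chain, by the loop formula recalled above).
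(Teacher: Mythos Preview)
Your proof is correct and follows essentially the same approach the paper intends: the paper states the corollary as a direct consequence of the non-universal scalar results (the text cites Proposition~\ref{prop:NUcontinuous}, though Proposition~\ref{prop:NUdiscontinuous} is the operative statement), and you have filled in the details the paper omits, namely the injective encoding $\phi:\mathcal{X}^{n}\hookrightarrow\mathbb{R}$ to reduce to the scalar discontinuous case and the passage from $\tfrac{1}{2}\max_{x^{n}}P(x^{n})$ to the exponent $H_{\infty}$ via the definition of the min-entropy rate. There is nothing to add.
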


\subsection{Universal Simulation from a Markov Process}

Given a finite state space $\mathcal{X}$, a order $k\geq1$, and
a initial state $x_{-k+1}^{0}$, we denote the set of all possible
distributions of Markov chains with these parameters as $\mathcal{P}_{X_{k+1}|X^{k}}^{(n)}:=\left\{ \prod_{i=1}^{n}P_{X_{k+1}|X^{k}}(x_{i}|x_{i-1},...,x_{i-k}):P_{X_{k+1}|X^{k}}\in\mathcal{P}_{X_{k+1}|X^{k}}\right\} $,
where $\mathcal{P}_{X_{k+1}|X^{k}}$ denotes the set of all possible
transition probability $P_{X_{k+1}|X^{k}}$ (from $\mathcal{X}^{k}$
to $\mathcal{X}$).

We next consider universal simulation from a Markov process. We generalize
Theorem \ref{thm:Udiscrete} to this case. The proof of Theorem \ref{thm:UMarkov}
is provided in Appendix \ref{sec:Proof-of-Theorem-Markov}. 
\begin{thm}
\label{thm:UMarkov} Assume $Q_{Y}$ is a continuous distribution.
Then given a finite state space $\mathcal{X}$, an order $k$, and
an initial state $x_{-k+1}^{0}$, for the universal $(\mathcal{P}_{X_{k+1}|X^{k}}^{(n)},Q_{Y})$-simulation
problem, we have\footnote{Here the definition of $\dotasymp$ is analogous to that for stationary
memoryless processes. } $\mathcal{E}_{\mathrm{KS}}(\mathcal{P}_{X_{k+1}|X^{k}}^{(n)},Q_{Y})\dotasymp e^{-nH_{\text{\ensuremath{\infty}}}(x_{-k+1}^{0},P_{X_{k+1}|X^{k}})}$. 
\end{thm}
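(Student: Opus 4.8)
The plan is to prove this by adapting the type-based construction used for Theorem~\ref{thm:Udiscrete} from i.i.d.\ sequences to the method of types for Markov chains. Fix $\mathcal{X}$, the order $k$, and the initial state $x_{-k+1}^{0}$. For a sequence $x^{n}\in\mathcal{X}^{n}$ I would let $N_{x^{n}}(a^{k},b):=\#\{1\le i\le n:(x_{i-k},\dots,x_{i-1})=a^{k},\ x_{i}=b\}$ be its vector of $k$-th order transition counts (read off using the prescribed past $x_{-k+1}^{0}$), and call two sequences of the same \emph{Markov type} if they share this count vector. The crucial observation, exactly parallel to the i.i.d.\ case, is that for every transition kernel $P_{X_{k+1}|X^{k}}$ one has $P(x^{n})=\prod_{a^{k},b}P_{X_{k+1}|X^{k}}(b|a^{k})^{N_{x^{n}}(a^{k},b)}$, so $P(x^{n})$ depends on $x^{n}$ only through its Markov type. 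Consequently, conditioned on the Markov type, $X^{n}$ is \emph{uniformly} distributed over the corresponding type class $\mathcal{T}$, \emph{regardless} of the (unknown) kernel; this is what makes a universal simulator possible.

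\textbf{Achievability.} Given this, I would build the universal simulator $f_{n}$ type class by type class: on each type class $\mathcal{T}$ (finite, of size $|\mathcal{T}|$) map the $|\mathcal{T}|$ equiprobable sequences to the points $G_{Y}^{-1}\!\big(\tfrac{2j-1}{2|\mathcal{T}|}\big)$, $j=1,\dots,|\mathcal{T}|$, so that the conditional output CDF satisfies $\sup_{y}|F_{Y}(y\mid\mathcal{T})-G_{Y}(y)|\le\tfrac{1}{2}|\mathcal{T}|^{-1}$ (here continuity of $Q_{Y}$ is used). Mixing over types exactly as in \eqref{eq:-20} gives
\[
\sup_{y}\bigl|F_{Y}(y)-G_{Y}(y)\bigr|\le\tfrac{1}{2}\sum_{\mathcal{T}}P(\mathcal{T})\,|\mathcal{T}|^{-1}=\tfrac{1}{2}\sum_{\mathcal{T}}p_{\mathcal{T}},
\]
where $p_{\mathcal{T}}$ is the common probability of any $x^{n}\in\mathcal{T}$, since $P(\mathcal{T})=|\mathcal{T}|\,p_{\mathcal{T}}$ (empty, non-realizable count vectors contribute nothing). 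Each transition count lies in $\{0,\dots,n\}$, so the number of Markov types is at most $(n+1)^{|\mathcal{X}|^{k+1}}$, whence
\[
\tfrac{1}{2}\sum_{\mathcal{T}}p_{\mathcal{T}}\le\tfrac{1}{2}(n+1)^{|\mathcal{X}|^{k+1}}\max_{x^{n}}P(x^{n})\doteq e^{-nH_{\infty}(x_{-k+1}^{0},P_{X_{k+1}|X^{k}})},
\]
the last step being the definition of the min-entropy rate (whose limit exists by Fekete). Since $f_{n}$ depends only on $\mathcal{X},k,x_{-k+1}^{0}$ and $Q_{Y}$, and not on the kernel, taking $f_{n,k}=f_{n}$ for all $k$ yields $\mathcal{E}_{\mathrm{KS}}(\mathcal{P}_{X_{k+1}|X^{k}}^{(n)},Q_{Y})\dotpreceq e^{-nH_{\infty}(x_{-k+1}^{0},P_{X_{k+1}|X^{k}})}$.

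\textbf{Converse and main obstacle.} For the matching lower bound I would invoke that non-universal errors never exceed universal ones together with Corollary~\ref{cor:NUMarkov}, which already gives $E_{\mathrm{KS}}(P_{X^{n}},Q_{Y})\doteq e^{-nH_{\infty}(x_{-k+1}^{0},P_{X_{k+1}|X^{k}})}$ for continuous $Q_{Y}$; hence $\mathcal{E}_{\mathrm{KS}}(\mathcal{P}_{X_{k+1}|X^{k}}^{(n)},Q_{Y})\dotsucceq e^{-nH_{\infty}(x_{-k+1}^{0},P_{X_{k+1}|X^{k}})}$, and combining the two bounds gives $\dotasymp$. The step needing the most care is the Markov method-of-types bookkeeping: verifying that the equivalence ``same $k$-th order transition count vector with the prescribed initial block'' genuinely makes the conditional law uniform and kernel-independent, and that the type count $(n+1)^{|\mathcal{X}|^{k+1}}$ is subexponential so that it is absorbed by $\doteq$. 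Everything else is a routine transcription of the proof of Theorem~\ref{thm:Udiscrete}; in particular no ergodicity of the chain is needed, since the argument uses the definition of $H_{\infty}$ directly rather than the loop formula.
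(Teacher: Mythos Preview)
Your proposal is correct and follows essentially the same approach as the paper: partition $\mathcal{X}^{n}$ into $k$-th order Markov type classes, use that the conditional law on each class is uniform and kernel-independent, map each class optimally to get the $\tfrac{1}{2}|\mathcal{T}|^{-1}$ bound, sum over types using the polynomial type-count $(n+1)^{|\mathcal{X}|^{k+1}}$, and obtain the converse from the non-universal lower bound. Your citation of Corollary~\ref{cor:NUMarkov} for the converse is in fact the right reference (the paper's pointer to Proposition~\ref{prop:NUdiscrete} is a slip).
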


\section{Simulating a Random Element from another Random Element}

\subsection{Non-universal Simulation}

Next we show that an arbitrary continuous \emph{random element }(or
\emph{general random variable}) is sufficient to simulate another
arbitrary \emph{random element}. Here random elements is a generalization
of random variable, which may be defined on any non-empty Borel set
in a separable metric space. 
\begin{thm}
\label{thm:NUgeneralRV} Assume $P_{X}$ and $Q_{Y}$ are two distributions
respectively defined on any non-empty Borel sets $(\mathcal{X},\mathcal{B}_{\mathcal{X}})$
and $(\mathcal{Y},\mathcal{B}_{\mathcal{Y}})$ in two Polish spaces
(complete separable metric spaces). If $P_{X}$ is continuous, then
there exists a measurable mapping $Y=f(X)$ such that $P_{Y}=Q_{Y}$.
That is, $E_{\mathrm{TV}}(P_{X},Q_{Y})=0$. 
\end{thm}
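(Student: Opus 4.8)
The plan is to reduce the statement to the real-valued case already settled in Proposition \ref{prop:NUcontinuous}, using the classical \emph{Borel isomorphism theorem} (Kuratowski) together with the standard fact that an atomless measure on a standard Borel space can be pushed forward to Lebesgue measure on $[0,1]$. The upshot is that, up to Borel isomorphisms of the underlying spaces, simulating $Q_{Y}$ from $P_{X}$ is the same problem as simulating an arbitrary distribution on $\mathbb{R}$ from an atomless one on $\mathbb{R}$, which is exactly inverse-transform sampling.

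First I would record the structural facts. Since $(\mathcal{X},\mathcal{B}_{\mathcal{X}})$ and $(\mathcal{Y},\mathcal{B}_{\mathcal{Y}})$ are (non-empty) Borel subsets of Polish spaces, each equipped with its relative Borel $\sigma$-algebra is a \emph{standard Borel space}, and by Kuratowski's theorem every standard Borel space is Borel isomorphic either to a countable set with its power set or to $(\mathbb{R},\mathcal{B}_{\mathbb{R}})$. Because $P_{X}$ is continuous (atomless) and a probability measure on a countable space is purely atomic, $\mathcal{X}$ must be uncountable, so there is a Borel isomorphism $\iota:\mathcal{X}\to\mathbb{R}$; set $\tilde{P}:=\iota_{*}P_{X}$, an atomless Borel probability measure on $\mathbb{R}$. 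Its CDF $F_{\tilde{P}}$ is continuous and non-decreasing with range all of $[0,1]$ except possibly the endpoints (a $\lambda$-null set), and a direct computation gives $(F_{\tilde{P}})_{*}\tilde{P}=\lambda$, the Lebesgue measure on $[0,1]$. Hence $g_{1}:=F_{\tilde{P}}\circ\iota:\mathcal{X}\to[0,1]$ is Borel measurable with $(g_{1})_{*}P_{X}=\lambda$. (Equivalently one may cite the isomorphism theorem for atomless standard probability spaces, which gives an isomorphism mod $0$ with $([0,1],\lambda)$; redefining $g_{1}$ arbitrarily on the exceptional null set does not affect the pushforward.)

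Next I would build the target side. Again $(\mathcal{Y},\mathcal{B}_{\mathcal{Y}})$ is Borel isomorphic to a Borel subset $B\subseteq\mathbb{R}$ via some $\kappa:\mathcal{Y}\to B$ (this also covers countable $\mathcal{Y}$); let $Q':=\kappa_{*}Q_{Y}\in\mathcal{P}(\mathbb{R},\mathcal{B}_{\mathbb{R}})$, which may be arbitrary. Applying Proposition \ref{prop:NUcontinuous} with seed distribution $\lambda$ (which is continuous) and target $Q'$, the inverse-transform map $g_{2}':=G_{Q'}^{-1}$ satisfies $(g_{2}')_{*}\lambda=Q'$; then $g_{2}:=\kappa^{-1}\circ g_{2}':[0,1]\to\mathcal{Y}$ is Borel measurable with $(g_{2})_{*}\lambda=Q_{Y}$. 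Finally set $f:=g_{2}\circ g_{1}:\mathcal{X}\to\mathcal{Y}$, which is Borel measurable as a composition of Borel maps, and $P_{Y}=f_{*}P_{X}=(g_{2})_{*}(g_{1})_{*}P_{X}=(g_{2})_{*}\lambda=Q_{Y}$, so $\left|P_{Y}-Q_{Y}\right|_{\mathrm{TV}}=0$ and $E_{\mathrm{TV}}(P_{X},Q_{Y})=0$.

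I expect the only genuinely non-routine point to be the correct invocation of the descriptive-set-theoretic machinery: that a Borel subset of a Polish space is standard Borel, that Kuratowski's isomorphism theorem applies, and that the atomless hypothesis is precisely what rules out the countable case (where no continuous $P_{X}$ exists). The remaining measure-theoretic bookkeeping — the omitted endpoints of $[0,1]$, the ``mod $0$'' exceptional sets, and measurability of the compositions — is routine once this framework is set up, and the whole argument specializes to Proposition \ref{prop:NUcontinuous} when $\mathcal{X}=\mathcal{Y}=\mathbb{R}$.
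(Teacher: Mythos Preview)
Your proposal is correct and follows essentially the same route as the paper: reduce both source and target to the real line via the Borel isomorphism theorem for standard Borel spaces, observe that atomlessness of $P_{X}$ is preserved, and then invoke Proposition~\ref{prop:NUcontinuous} (inverse-transform sampling). The only cosmetic difference is that you factor the middle step through Lebesgue measure on $[0,1]$ explicitly, whereas the paper applies $G_{Q_Z}^{-1}\circ F_{P_W}$ in one shot; both handle the same ``mod $0$'' measurability issues you flag.
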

\begin{proof}
For any two  Borel subsets of Polish spaces, they are Borel-isomorphic
if and only if they have the same cardinality, which moreover is either
finite, countable, or $\mathfrak{c}$ (the cardinal of the continuum,
that is, of $[0,1]$) \cite{dudley2002real}. Hence for any measurable
space $(\mathcal{X},\mathcal{B}_{\mathcal{X}})$, we can always find
a Borel subset $(\mathcal{W},\mathcal{B}_{\mathcal{W}})$ of $([0,1],\mathcal{B}_{[0,1]})$
such that $(\mathcal{X},\mathcal{B}_{\mathcal{X}})$ and $(\mathcal{W},\mathcal{B}_{\mathcal{W}})$
are Borel-isomorphic. Suppose $\varphi$ is a Borel isomorphism from
$(\mathcal{X},\mathcal{B}_{\mathcal{X}})$ to $(\mathcal{W},\mathcal{B}_{\mathcal{W}})$.
Denote $P_{W}:=P_{X}\circ\varphi^{-1}$. Since $P_{X}$ is continuous
(or atomless), $P_{W}$ must be continuous as well. This is because
if $P_{X}(\varphi^{-1}(w))=P_{W}(w)>0$ for some $w\in[0,1]$, then
it contradicts with the continuity of $P_{X}$. Hence $P_{W}$ is
continuous. (Furthermore, the existence of the Borel isomorphism $\varphi$
can be shown by \cite[Theorem 9.2.2]{bogachev2007measure} as well.)

Similarly, for any measurable space $(\mathcal{Y},\mathcal{B}_{\mathcal{Y}})$,
we can always find a Borel subset $(\mathcal{Z},\mathcal{B}_{\mathcal{Z}})$
of $(\mathbb{R},\mathcal{B}_{\mathbb{R}})$ such that $(\mathcal{Y},\mathcal{B}_{\mathcal{Y}})$
and $(\mathcal{Z},\mathcal{B}_{\mathcal{Z}})$ are Borel-isomorphic.
Suppose $\phi$ is a Borel isomorphism from $(\mathcal{Y},\mathcal{B}_{\mathcal{Y}})$
to $(\mathcal{Z},\mathcal{B}_{\mathcal{Z}})$. Denote $Q_{Z}$ as
the distribution of $\tilde{Z}:=\phi(\tilde{Y})$ with $\tilde{Y}\sim Q_{Y}$.

By Proposition \ref{prop:NUcontinuous}, we know that there exists
a measurable mapping $\eta$ such that $Z:=\eta(W)\sim Q_{Z}$ with
$W\sim P_{W}$. Now consider the mapping $Y=\phi^{-1}\circ\eta\circ\varphi(X)$.
Obviously, $Y=\phi^{-1}(Z)\sim Q_{Y}$.
\end{proof}
Note that random vectors defined on $(\mathbb{R}^{n},\mathcal{B}_{\mathbb{R}^{n}}),n\in\mathbb{Z}^{+}$
are special cases of such random elements. Hence we have the following
corollary. 
\begin{cor}
\label{cor:NUvector}For a continuous $P_{X}$ defined on $(\mathbb{R},\mathcal{B}_{\mathbb{R}})$
and an arbitrary $Q_{Y^{n}}$ defined on $(\mathbb{R}^{n},\mathcal{B}_{\mathbb{R}^{n}}),n\in\mathbb{Z}^{+}$,
there exists a measurable mapping $Y^{n}=f(X):\mathbb{R}\to\mathbb{R}^{n}$
such that $P_{Y^{n}}=Q_{Y^{n}}$. 
\end{cor}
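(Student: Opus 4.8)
The plan is to obtain this statement as an immediate specialization of Theorem~\ref{thm:NUgeneralRV}. First I would observe that both $(\mathbb{R},\mathcal{B}_{\mathbb{R}})$ and $(\mathbb{R}^{n},\mathcal{B}_{\mathbb{R}^{n}})$ are non-empty Borel sets in Polish spaces: $\mathbb{R}$ and $\mathbb{R}^{n}$ are themselves complete separable metric spaces under the Euclidean metric, and each is trivially a Borel subset of itself. Hence the hypotheses of Theorem~\ref{thm:NUgeneralRV} are met with $(\mathcal{X},\mathcal{B}_{\mathcal{X}})=(\mathbb{R},\mathcal{B}_{\mathbb{R}})$ and $(\mathcal{Y},\mathcal{B}_{\mathcal{Y}})=(\mathbb{R}^{n},\mathcal{B}_{\mathbb{R}^{n}})$, together with the standing assumption that $P_{X}$ is continuous.

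Applying Theorem~\ref{thm:NUgeneralRV} then produces a measurable mapping $f:\mathbb{R}\to\mathbb{R}^{n}$ with $P_{f(X)}=Q_{Y^{n}}$, which is exactly the claimed conclusion and gives $E_{\mathrm{TV}}(P_{X},Q_{Y^{n}})=0$. Since the output distribution coincides with the target identically, every statistical distance between them vanishes; in particular the (multivariate) KS distance is also zero, which matches the entry recorded in Table~\ref{tab:Summary-of-our}.

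Alternatively, for a self-contained argument one could retrace the proof of Theorem~\ref{thm:NUgeneralRV} in this concrete setting: fix a Borel isomorphism $\phi$ between $(\mathbb{R}^{n},\mathcal{B}_{\mathbb{R}^{n}})$ and a Borel subset $(\mathcal{Z},\mathcal{B}_{\mathcal{Z}})$ of $(\mathbb{R},\mathcal{B}_{\mathbb{R}})$ (such an isomorphism exists because standard Borel spaces of the same continuum cardinality are Borel-isomorphic), push $Q_{Y^{n}}$ forward to $Q_{Z}:=Q_{Y^{n}}\circ\phi^{-1}$ on $\mathbb{R}$, invoke Proposition~\ref{prop:NUcontinuous} to obtain $\eta=G_{Z}^{-1}\circ F_{X}$ with $\eta(X)\sim Q_{Z}$ whenever $X\sim P_{X}$, and finally set $f:=\phi^{-1}\circ\eta$. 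I do not expect any genuine obstacle: the only step requiring care is the invocation of the Borel-isomorphism theorem for $\mathbb{R}^{n}$, which is standard (and already used in the proof of Theorem~\ref{thm:NUgeneralRV}).
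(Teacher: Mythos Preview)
Your proposal is correct and matches the paper's approach: the paper derives the corollary simply by noting that random vectors on $(\mathbb{R}^{n},\mathcal{B}_{\mathbb{R}^{n}})$ are special cases of the random elements in Theorem~\ref{thm:NUgeneralRV}, exactly as you argue. Your additional self-contained alternative (via a Borel isomorphism and Proposition~\ref{prop:NUcontinuous}) just unpacks the proof of Theorem~\ref{thm:NUgeneralRV} in this concrete setting and is also fine.
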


\subsection{Universal Simulation}

Now we generalize Theorem \ref{thm:Ucontinuous} to simulating random
elements. 
\begin{thm}
\label{thm:UgeneralRV} Assume $R_{X}$ and $Q_{Y}$ are two distributions
respectively defined on any non-empty Borel sets $(\mathcal{X},\mathcal{B}_{\mathcal{X}})$
and $(\mathcal{Y},\mathcal{B}_{\mathcal{Y}})$ in two Polish spaces,
and moreover, $R_{X}$ is continuous. $\mathcal{P}_{X}(R_{X})$ denotes
the set of all absolutely continuous distributions (defined on $(\mathcal{X},\mathcal{B}_{\mathcal{X}})$)
respect to $R_{X}$. Then for the universal $(\mathcal{P}_{X}(R_{X}),Q_{Y})$-simulation
problem, $\mathcal{E}_{\mathrm{TV}}(\mathcal{P}_{X}(R_{X}),Q_{Y})\asymp0$. 
\end{thm}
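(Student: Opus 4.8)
The plan is to reduce the statement to the real-valued case settled in Theorem~\ref{thm:Ucontinuous}, by composing with Borel isomorphisms on both the seed and target sides (as in the proof of Theorem~\ref{thm:NUgeneralRV}) and inserting a probability-integral-transform step that turns ``$P_X\ll R_X$'' into ``the image of $P_X$ is Lebesgue-absolutely-continuous''. \emph{Target side:} by the Borel isomorphism theorem there are a Borel subset $(\mathcal{Z},\mathcal{B}_{\mathcal{Z}})\subseteq(\mathbb{R},\mathcal{B}_{\mathbb{R}})$ and a Borel isomorphism $\phi\colon(\mathcal{Y},\mathcal{B}_{\mathcal{Y}})\to(\mathcal{Z},\mathcal{B}_{\mathcal{Z}})$; put $Q_{Z}:=Q_{Y}\circ\phi^{-1}$ (a distribution on $\mathbb{R}$ supported on $\mathcal{Z}$), and fix any measurable extension $\bar{\phi}^{-1}\colon\mathbb{R}\to\mathcal{Y}$ of $\phi^{-1}$. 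Since $Q_{Z}(\mathbb{R}\setminus\mathcal{Z})=0$, we have $Q_{Z}\circ(\bar{\phi}^{-1})^{-1}=Q_{Y}$, so by the data processing inequality for the TV distance, $\left|P\circ(\bar{\phi}^{-1})^{-1}-Q_{Y}\right|_{\mathrm{TV}}\le\left|P-Q_{Z}\right|_{\mathrm{TV}}$ for every distribution $P$ on $\mathbb{R}$. Thus it suffices to simulate $Q_{Z}$ on $\mathbb{R}$ and then post-compose with $\bar{\phi}^{-1}$.

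\emph{Seed side (the key step):} again by the Borel isomorphism theorem, choose a Borel isomorphism $\varphi\colon(\mathcal{X},\mathcal{B}_{\mathcal{X}})\to(\mathcal{W},\mathcal{B}_{\mathcal{W}})$ with $(\mathcal{W},\mathcal{B}_{\mathcal{W}})\subseteq([0,1],\mathcal{B}_{[0,1]})$. As in the proof of Theorem~\ref{thm:NUgeneralRV}, $R_{W}:=R_{X}\circ\varphi^{-1}$ is continuous (atomless) because $\varphi$ is a bijection, hence its CDF $F_{R_{W}}$ is continuous. Set $\psi:=F_{R_{W}}\circ\varphi\colon\mathcal{X}\to[0,1]$; note $\psi$ depends only on $R_{X}$, not on the particular $P_{X}$. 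By Proposition~\ref{prop:NUcontinuous} applied with the uniform target (i.e.\ the probability integral transform), $R_{X}\circ\psi^{-1}$ is the Lebesgue (uniform) measure on $[0,1]$. Moreover, for any $P_{X}\in\mathcal{P}_{X}(R_{X})$, the relation $P_{X}\ll R_{X}$ immediately gives $P_{U}:=P_{X}\circ\psi^{-1}\ll R_{X}\circ\psi^{-1}$, so $P_{U}$ is absolutely continuous with respect to Lebesgue measure; that is, $P_{U}\in\mathcal{P}_{\mathrm{ac}}(\mathbb{R},\mathcal{B}_{\mathbb{R}})$.

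\emph{Conclusion:} applying Theorem~\ref{thm:Ucontinuous} to the universal $(\mathcal{P}_{\mathrm{ac}},Q_{Z})$-simulation problem yields a sequence of simulators $g_{k}\colon\mathbb{R}\to\mathbb{R}$, independent of $P_{U}$, with $\limsup_{k\to\infty}\left|P_{U}\circ g_{k}^{-1}-Q_{Z}\right|_{\mathrm{TV}}=0$ for every $P_{U}\in\mathcal{P}_{\mathrm{ac}}$. Define $f_{k}:=\bar{\phi}^{-1}\circ g_{k}\circ\psi\colon\mathcal{X}\to\mathcal{Y}$, which does not depend on $P_{X}$. Then $P_{X}\circ f_{k}^{-1}=(P_{U}\circ g_{k}^{-1})\circ(\bar{\phi}^{-1})^{-1}$, and combining the two data-processing estimates gives $\left|P_{X}\circ f_{k}^{-1}-Q_{Y}\right|_{\mathrm{TV}}\le\left|P_{U}\circ g_{k}^{-1}-Q_{Z}\right|_{\mathrm{TV}}$; taking $\limsup_{k\to\infty}$ shows this is $0$ for every $P_{X}\in\mathcal{P}_{X}(R_{X})$, i.e.\ $\mathcal{E}_{\mathrm{TV}}(\mathcal{P}_{X}(R_{X}),Q_{Y})\asymp0$.

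The genuinely new point beyond Theorems~\ref{thm:Ucontinuous} and~\ref{thm:NUgeneralRV} — and the part that needs care — is the seed reduction: the transformation $\psi$ must be the \emph{same} for all admissible $P_{X}$, otherwise the resulting scheme would not be universal, and the probability integral transform with respect to $R_{X}$ is exactly what lets a single $R_{X}$-determined map $\psi$ carry the whole family $\{P_{X}:P_{X}\ll R_{X}\}$ (which may include, e.g., singular-continuous laws when $R_{X}$ is singular continuous) into the Lebesgue-absolutely-continuous distributions, where Theorem~\ref{thm:Ucontinuous} applies. The remaining points — measurability of $\bar{\phi}^{-1}$, atomlessness of $R_{W}$, and the two TV data-processing inequalities — are routine.
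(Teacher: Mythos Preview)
Your proof is correct and follows essentially the same route as the paper: a Borel isomorphism plus probability-integral transform on the seed side (your $\psi=F_{R_W}\circ\varphi$ is exactly the paper's $\eta\circ\varphi$) to land in $\mathcal{P}_{\mathrm{ac}}$, then Theorem~\ref{thm:Ucontinuous}, then a fixed map to $\mathcal{Y}$. The only cosmetic difference is that the paper first universally simulates the uniform law and then invokes Theorem~\ref{thm:NUgeneralRV} to reach $Q_Y$, whereas you apply Theorem~\ref{thm:Ucontinuous} directly with target $Q_Z=Q_Y\circ\phi^{-1}$ and post-compose with $\bar{\phi}^{-1}$; the two are equivalent since Theorem~\ref{thm:Ucontinuous} already allows arbitrary targets.
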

\begin{proof}
Since $R_{X}$ is continuous, as shown in the proof of Theorem \ref{thm:NUgeneralRV},
there exists a Borel isomorphism $\varphi$ from $(\mathcal{X},\mathcal{B}_{\mathcal{X}})$
to a Borel subset $(\mathcal{W},\mathcal{B}_{\mathcal{W}})$ of $([0,1],\mathcal{B}_{[0,1]})$
such that the output distribution $R_{X}\circ\varphi^{-1}$ is continuous.
Denote $R_{Z}$ as the uniform distribution (which is also the Lebesgue
measure) on $([0,1],\mathcal{B}_{[0,1]})$. Then by Proposition \ref{prop:NUcontinuous},
we know that there exists a measurable mapping $\eta:(\mathcal{W},\mathcal{B}_{\mathcal{W}})\to([0,1],\mathcal{B}_{[0,1]})$
such that 
\begin{equation}
R_{X}\circ\varphi^{-1}\circ\eta^{-1}=R_{Z}.\label{eq:-31}
\end{equation}
Hence a random element $\tilde{X}\sim R_{X}$ is mapped to a uniform
random variable $\tilde{Z}\sim R_{Z}$ through the mapping $\tilde{Z}=\eta\circ\varphi(\tilde{X})$.
We define $P_{Z}:=P_{X}\circ\varphi^{-1}\circ\eta^{-1}$, which denotes
the distribution of $Z=\eta\circ\varphi(X)$ where $X\sim P_{X}$.
Since $P_{X}$ is absolutely continuous respect to $R_{X}$, we have
that $P_{Z}$ is absolutely continuous respect to $R_{Z}$ (or the
Lebesgue measure). This is because on one hand, by \eqref{eq:-31},
we have $R_{X}(\varphi^{-1}\circ\eta^{-1}(A))=0$ for any $A$ such
that $R_{Z}(A)=0$; on the other hand, $P_{X}$ is absolutely continuous
respect to $R_{X}$, hence $P_{X}(\varphi^{-1}\circ\eta^{-1}(A))=0$,
i.e., $P_{Z}(A)=0$.

Since $P_{Z}$ is absolutely continuous, by Theorem \ref{thm:Ucontinuous},
we know that there exists a sequence of universal mappings $\tau_{k}:([0,1],\mathcal{B}_{[0,1]})\to([0,1],\mathcal{B}_{[0,1]})$
(independent of $P_{Z}$) such that the resulting approximation error
\[
\lim_{k\to\infty}\left|P_{Z}\circ\tau_{k}^{-1}-R_{Z}\right|_{\mathrm{TV}}=0.
\]
 Observe that $P_{Z}=P_{X}\circ\varphi^{-1}\circ\eta^{-1}$ and $\eta,\varphi$
(only depend on $R_{X}$) are independent of $P_{X}$. Hence the universal
mappings $\tau_{k}\circ\eta\circ\varphi$ satisfy 
\begin{equation}
\lim_{k\to\infty}\left|P_{X}\circ\varphi^{-1}\circ\eta^{-1}\circ\tau_{k}^{-1}-R_{Z}\right|_{\mathrm{TV}}=0.\label{eq:-32}
\end{equation}

Since $R_{Z}$ is continuous, by Theorem \ref{thm:NUgeneralRV}, we
know that then there exists a measurable mapping $\kappa:([0,1],\mathcal{B}_{[0,1]})\to(\mathcal{Y},\mathcal{B}_{\mathcal{Y}})$
such that $R_{Z}\circ\kappa^{-1}=Q_{Y}$.

Now we consider the universal mappings $Y=\kappa\circ\tau_{k}\circ\eta\circ\varphi(X)$.
We have 
\begin{align*}
\left|P_{X}\circ\varphi^{-1}\circ\eta^{-1}\circ\tau_{k}^{-1}\circ\kappa^{-1}-Q_{Y}\right|_{\mathrm{TV}} & =\sup_{A\in\mathcal{B}_{\mathcal{Y}}}\left|P_{X}\circ\varphi^{-1}\circ\eta^{-1}\circ\tau_{k}^{-1}\circ\kappa^{-1}(A)-Q_{Y}(A)\right|\\
 & =\sup_{A\in\mathcal{B}_{\mathcal{Y}}}\left|P_{X}\circ\varphi^{-1}\circ\eta^{-1}\circ\tau_{k}^{-1}(\kappa^{-1}(A))-R_{Z}(\kappa^{-1}(A))\right|\\
 & \leq\sup_{B\in\mathcal{B}_{[0,1]}}\left|P_{X}\circ\varphi^{-1}\circ\eta^{-1}\circ\tau_{k}^{-1}(B)-R_{Z}(B)\right|\\
 & =\left|P_{X}\circ\varphi^{-1}\circ\eta^{-1}\circ\tau_{k}^{-1}-R_{Z}\right|_{\mathrm{TV}}.
\end{align*}
Combining this with \eqref{eq:-32} yields
\[
\lim_{k\to\infty}\left|P_{X}\circ\varphi^{-1}\circ\eta^{-1}\circ\tau_{k}^{-1}\circ\kappa^{-1}-Q_{Y}\right|_{\mathrm{TV}}=0.
\]
\end{proof}
Note that a random vector defined on $(\mathbb{R}^{n},\mathcal{B}_{\mathbb{R}^{n}}),n\in\mathbb{Z}^{+}$
is a special case of such a random element. Furthermore, for any absolutely
continuous (respect to the Lebesgue measure) $P_{X^{n}}$ defined
on $(\mathbb{R}^{n},\mathcal{B}_{\mathbb{R}^{n}}),n\in\mathbb{Z}^{+}$,
it must be absolutely continuous respect to the $n$-dimensional standard
Gaussian distribution (since its PDF is positive for every point in
$\mathbb{R}^{n}$). Hence we have the following corollary.
\begin{cor}
\label{cor:Uvector}For the set $\mathcal{P}_{\mathrm{ac}}$ of absolutely
continuous distributions on $(\mathbb{R},\mathcal{B}_{\mathbb{R}})$
and an arbitrary $Q_{Y^{n}}$ on $(\mathbb{R}^{n},\mathcal{B}_{\mathbb{R}^{n}}),n\in\mathbb{Z}^{+}$,
the approximation errors for the universal $(\mathcal{P}_{\mathrm{ac}},Q_{Y^{n}})$-simulation
problem satisfies $\mathcal{E}_{\theta}(\mathcal{P}_{\mathrm{ac}},Q_{Y^{n}})\asymp0$
for $\theta\in\left\{ \mathrm{KS},\mathrm{TV}\right\} $. 
\end{cor}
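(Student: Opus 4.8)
The plan is to read off Corollary~\ref{cor:Uvector} from Theorem~\ref{thm:UgeneralRV} by choosing the reference measure $R_X$ in that theorem to be a fixed continuous distribution on $(\mathbb{R},\mathcal{B}_{\mathbb{R}})$ whose class of absolutely continuous distributions is exactly $\mathcal{P}_{\mathrm{ac}}$. The natural choice is the standard Gaussian $\gamma=\mathcal{N}(0,1)$ on $\mathbb{R}$: it is atomless (hence ``continuous'' in the sense used here), and since its density is strictly positive everywhere on $\mathbb{R}$, a Borel set is $\gamma$-null if and only if it is Lebesgue-null. Consequently the set $\mathcal{P}_X(\gamma)$ of distributions on $(\mathbb{R},\mathcal{B}_{\mathbb{R}})$ absolutely continuous with respect to $\gamma$ coincides with $\mathcal{P}_{\mathrm{ac}}$.

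First I would invoke Theorem~\ref{thm:UgeneralRV} with $(\mathcal{X},\mathcal{B}_{\mathcal{X}})=(\mathbb{R},\mathcal{B}_{\mathbb{R}})$, $R_X=\gamma$, $(\mathcal{Y},\mathcal{B}_{\mathcal{Y}})=(\mathbb{R}^{n},\mathcal{B}_{\mathbb{R}^{n}})$ (a Borel subset of the Polish space $\mathbb{R}^{n}$), and $Q_Y=Q_{Y^{n}}$. Since $\gamma$ is continuous and $Q_{Y^{n}}$ is arbitrary, the theorem gives $\mathcal{E}_{\mathrm{TV}}(\mathcal{P}_X(\gamma),Q_{Y^{n}})\asymp0$; equivalently, for every $P_X\in\mathcal{P}_X(\gamma)=\mathcal{P}_{\mathrm{ac}}$ there is a sequence of simulators $\{f_k\}$, independent of $P_X$, with $\lim_{k\to\infty}\left|P_X\circ f_k^{-1}-Q_{Y^{n}}\right|_{\mathrm{TV}}=0$. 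In particular the zero function is TV-achievable for the seed class $\mathcal{P}_{\mathrm{ac}}$, so $\mathcal{E}_{\mathrm{TV}}(\mathcal{P}_{\mathrm{ac}},Q_{Y^{n}})\asymp0$.

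It then remains to transfer this to the KS distance. Here I would only use the elementary bound $\left|P-Q\right|_{\mathrm{KS}}\le\left|P-Q\right|_{\mathrm{TV}}$ on $\mathcal{P}(\mathbb{R}^{n},\mathcal{B}_{\mathbb{R}^{n}})$, which holds because the (multivariate) KS distance is a supremum of $\left|P(A)-Q(A)\right|$ over the orthants $A=(-\infty,x_1]\times\cdots\times(-\infty,x_n]\in\mathcal{B}_{\mathbb{R}^{n}}$. Applying this to $P_X\circ f_k^{-1}$ and $Q_{Y^{n}}$, the very same simulators drive the KS error to $0$; hence $\mathcal{E}_{\mathrm{KS}}(\mathcal{P}_{\mathrm{ac}},Q_{Y^{n}})\asymp0$. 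The matching ``$\succeq0$'' directions are automatic, since every achievable function is nonnegative.

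The argument is essentially bookkeeping once Theorem~\ref{thm:UgeneralRV} is available, so I do not anticipate a genuine obstacle. The only points that deserve a line of care are the identification $\mathcal{P}_{\mathrm{ac}}=\mathcal{P}_X(\gamma)$ (that ``absolutely continuous with respect to Lebesgue measure'' and ``absolutely continuous with respect to a fixed everywhere-positive density'' describe the same class on $\mathbb{R}$), the observation that achievability for a seed class is inherited by any subclass, and, for the fastidious, fixing the convention that for $n>1$ the KS distance is the CDF-based one, so that $\mathrm{KS}\le\mathrm{TV}$ remains valid. None of these is more than a sentence.
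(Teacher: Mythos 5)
Your proof is correct and takes essentially the same route as the paper's: both specialize Theorem~\ref{thm:UgeneralRV} to $R_X=\mathcal{N}(0,1)$ on $(\mathbb{R},\mathcal{B}_{\mathbb{R}})$, use the strictly positive Gaussian density to identify $\mathcal{P}_X(\mathcal{N}(0,1))$ with $\mathcal{P}_{\mathrm{ac}}$, and then pass from TV to KS via $\left|P-Q\right|_{\mathrm{KS}}\le\left|P-Q\right|_{\mathrm{TV}}$. Your clarification that for $n>1$ the KS distance is to be read as the supremum of $\left|P(A)-Q(A)\right|$ over orthants $A$ (so the comparison with TV still holds) is a point the paper leaves implicit.
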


\section{Concluding Remarks}

In this paper, motivated by the CLT and other universal simulation
problems in the literature, we consider both universal and non-universal
simulations of random variables with an arbitrary target distribution
$Q_{Y}$ by general mappings. We investigate the fastest convergence
rate of the approximation error for such a problem. One of our interesting
results is that under universal simulation, an \emph{absolutely continuous}
random element (or a general random variable, including random vectors)
respect to some continuous distribution is sufficient to simulate
another random element arbitrarily well. This requirement is a little
stronger than that for non-universal simulation, since under non-universal
simulation, a\emph{ continuous} random element is sufficient to exactly
simulate another random element. Another interesting result is that
when we use a stationary memoryless process or a Markov process to
simulate a random variable by a universal mapping, the approximation
error decays at least exponentially fast with rate $H_{\infty}(P_{X}):=-\log\max_{x}P_{X}(x)$
as the dimension $n$ of $X^{n}$ goes to infinity. Furthermore, as
a byproduct, we also obtain a property on uncorrelation between a
squeezed periodic function and any other integrable function. We think
this topic is of independent interest, and expect it to be further
applied in other problems in the future.

As for application aspects of our results, although practical digital
computers have finite precision for processing or storing datum, as
indicated by Proposition \ref{prop:Udiscontinuous}, our proposed
universal mapping in Fig. \ref{fig:Illustration-of-the}  still works
well on  such digital computers, as long as they have sufficiently
high precision; see the illustration in Fig. \ref{fig:blind-discrete}.

\appendix{}

\section{\label{sec:Proof-of-Theorem-discrete}Proof of Proposition \ref{prop:NUdiscrete}}

Sort the sequences in $\mathcal{X}{}^{n}$ as $x_{1}^{n},x_{2}^{n},...,x_{|\mathcal{X}|^{n}}^{n}$
such that $P_{X}^{n}(x_{1}^{n})\geq P_{X}^{n}(x_{2}^{n})\geq...\geq P_{X}^{n}(x_{|\mathcal{X}|^{n}}^{n})$.
Map $x_{1}^{n}$ to $y_{1}:=\arg\max_{y\in\mathcal{Y}}Q_{Y}(y)$;
map $x_{2}^{n}$ to $y_{2}:=\arg\max_{y\in\mathcal{Y}}\left\{ Q_{Y}(y)-P_{X}^{n}(x_{1}^{n})1\left\{ y=y_{1}\right\} \right\} $;
...; map $x_{|\mathcal{X}|^{n}-|\mathcal{Y}|-2}^{n}$ to $y_{|\mathcal{X}|^{n}-|\mathcal{Y}|-1}:=\arg\max_{y\in\mathcal{Y}}\bigl\{ Q_{Y}(y)-\sum_{i=1}^{|\mathcal{X}|^{n}-|\mathcal{Y}|-1}P_{X}^{n}(x_{i}^{n})1\left\{ y=y_{i}\right\} \bigr\}$.
Map the remaining $|\mathcal{Y}|+1$ sequences $x_{j}^{n},|\mathcal{X}|^{n}-|\mathcal{Y}|\leq j\leq|\mathcal{X}|^{n}$
to sequences in $\mathcal{Y}$ in a similar way as in the proof of
Proposition \ref{prop:NUdiscontinuous}, such that $\sup_{y\in\mathbb{R}}\left|\sum_{y'\le y}\sum_{i=1}^{j}P_{X}^{n}(x_{i}^{n})1\left\{ y'=y_{i}\right\} -G_{Y}(y)\right|$
is minimized for $|\mathcal{X}|^{n}-|\mathcal{Y}|\leq j\leq|\mathcal{X}|^{n}$.

For this mapping, observe that for any $1\leq j\leq|\mathcal{X}|^{n}$,
\begin{align*}
\max_{y\in\mathcal{Y}}\left\{ Q_{Y}(y)-\sum_{i=1}^{j}P_{X}^{n}(x_{i}^{n})1\left\{ y=y_{i}\right\} \right\}  & \geq\frac{\sum_{y\in\mathcal{Y}}\left\{ Q_{Y}(y)-\sum_{i=1}^{j}P_{X}^{n}(x_{i}^{n})1\left\{ y=y_{i}\right\} \right\} }{|\mathcal{Y}|}\\
 & =\frac{1-\sum_{i=1}^{j}P_{X}^{n}(x_{i}^{n})}{|\mathcal{Y}|}\\
 & =\frac{\sum_{i=j+1}^{|\mathcal{X}|^{n}}P_{X}^{n}(x_{i}^{n})}{|\mathcal{Y}|}.
\end{align*}
This implies that if $P_{X}^{n}(x_{j+1}^{n})\leq\frac{\sum_{i=j+1}^{|\mathcal{X}|^{n}}P_{X}^{n}(x_{i}^{n})}{|\mathcal{Y}|}$,
then $\sum_{i=1}^{j+1}P_{X}^{n}(x_{i}^{n})1\left\{ y_{j+1}=y_{i}\right\} \leq Q_{Y}(y_{j+1})$.
Therefore, the following claim holds. 
\begin{claim}
\label{claim:If--holds-1}If $P_{X}^{n}(x_{j+1}^{n})\leq\frac{\sum_{i=j+1}^{|\mathcal{X}|^{n}}P_{X}^{n}(x_{i}^{n})}{|\mathcal{Y}|}$
holds for $1\leq j\leq m$ (for some integer $m$), then $\sum_{i=1}^{m+1}P_{X}^{n}(x_{i}^{n})1\left\{ y=y_{i}\right\} \leq Q_{Y}(y)$
for all $y\in\mathcal{Y}$. 
\end{claim}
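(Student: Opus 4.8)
The plan is to prove the Claim by induction on $m$, after re-reading it as a statement about the leftover capacity of each target symbol. For $k\ge 0$ set $M_k(y):=\sum_{i=1}^{k}P_X^n(x_i^n)\,1\{y=y_i\}$ (the total seed mass the greedy assignment has routed to $y\in\mathcal{Y}$ after processing $x_1^n,\dots,x_k^n$) and $R_k(y):=Q_Y(y)-M_k(y)$; the Claim is exactly the assertion that $R_{m+1}(y)\ge0$ for every $y$. Two bookkeeping facts drive everything. First, $\sum_{y}R_k(y)=\sum_y Q_Y(y)-\sum_y M_k(y)=1-\sum_{i=1}^{k}P_X^n(x_i^n)=\sum_{i=k+1}^{|\mathcal{X}|^n}P_X^n(x_i^n)$, since each processed sequence is routed to exactly one symbol and $\{x_i^n\}$ enumerates all of $\mathcal{X}^n$. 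Second, by construction $y_{k+1}=\argmax_{y}\{Q_Y(y)-\sum_{i=1}^{k}P_X^n(x_i^n)1\{y=y_i\}\}=\argmax_y R_k(y)$, while $M_{k+1}(y)=M_k(y)$ for $y\ne y_{k+1}$ and $M_{k+1}(y_{k+1})=M_k(y_{k+1})+P_X^n(x_{k+1}^n)$. Combining these with the averaging bound $\max_y R_k(y)\ge\frac{1}{|\mathcal{Y}|}\sum_y R_k(y)$ — which is precisely the displayed inequality just above the Claim — gives the single-step fact: if the Claim's hypothesis holds at step $k$, i.e.\ $P_X^n(x_{k+1}^n)\le\frac{1}{|\mathcal{Y}|}\sum_{i=k+1}^{|\mathcal{X}|^n}P_X^n(x_i^n)$, then $P_X^n(x_{k+1}^n)\le\max_y R_k(y)=R_k(y_{k+1})$, hence $R_{k+1}(y_{k+1})=R_k(y_{k+1})-P_X^n(x_{k+1}^n)\ge0$.

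The induction is then immediate: assuming $R_m(y)\ge0$ for all $y$ (the inductive hypothesis, supplied by the Claim's hypothesis at steps $1,\dots,m-1$), and using the Claim's hypothesis also at $j=m$, the single-step fact gives $R_{m+1}(y_{m+1})\ge0$, whereas for $y\ne y_{m+1}$ we have $R_{m+1}(y)=R_m(y)\ge0$; so $R_{m+1}(y)\ge0$ for all $y$, which is the Claim for $m$. Equivalently — and perhaps cleaner to write out — one can bypass the induction: fix $y$, and if $k:=\max\{i\le m+1:y_i=y\}$ exists and is $\ge2$, apply the single-step fact at step $k-1$ (legitimate since $1\le k-1\le m$) to get $M_{m+1}(y)=M_k(y)\le Q_Y(y)$; if no $x_i^n$ with $i\le m+1$ is routed to $y$, then $M_{m+1}(y)=0\le Q_Y(y)$.

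The only point needing separate care — and the (minor) main obstacle — is the residual case $y=y_1$, i.e.\ $k=1$ above, which the hypothesis range $1\le j\le m$ does not reach: there $M_{m+1}(y_1)=P_X^n(x_1^n)$, so one must check $P_X^n(x_1^n)\le Q_Y(y_1)$ directly. This is the "step $j=0$" instance of the same argument: $y_1=\argmax_y Q_Y(y)$ by construction, so $\max_y R_0(y)=Q_Y(y_1)\ge\frac{1}{|\mathcal{Y}|}$, and in the regime in which the Claim is invoked (namely $n$ large enough, as in the Remark following Proposition~\ref{prop:NUdiscrete}) one has $P_X^n(x_1^n)=(\max_x P_X(x))^n\le\frac{1}{|\mathcal{Y}|}\le Q_Y(y_1)$. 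Everything else is the routine bookkeeping indicated above, so no substantial difficulty remains.
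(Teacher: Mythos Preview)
Your approach is essentially the paper's: the single-step fact you isolate is exactly the implication the paper draws from its displayed averaging inequality $\max_{y}\bigl\{Q_Y(y)-\sum_{i\le j}P_X^n(x_i^n)1\{y=y_i\}\bigr\}\ge\frac{1}{|\mathcal{Y}|}\sum_{i>j}P_X^n(x_i^n)$, and iterating it (equivalently, looking at the last step at which each $y$ is hit) yields the Claim. You are in fact more careful than the paper in flagging the $k=1$ edge case, which the hypothesis range $1\le j\le m$ does not reach; your fix via $P_X^n(x_1^n)=(\max_x P_X(x))^n\le 1/|\mathcal{Y}|\le Q_Y(y_1)$ for the large $n$ at which the Claim is applied is correct and indeed necessary, since the Claim as literally stated can fail without it (e.g.\ $n=1$, $|\mathcal{Y}|=2$, $Q_Y\equiv 1/2$, $P_X=(0.6,0.2,0.2)$).
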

Next we prove the following claim. 
\begin{claim}
\label{claim:If--holds}$P_{X}^{n}(x_{j+1}^{n})\leq\frac{\sum_{i=j+1}^{|\mathcal{X}|^{n}}P_{X}^{n}(x_{i}^{n})}{|\mathcal{Y}|}$
holds for $1\leq j\leq|\mathcal{X}|^{n}-|\mathcal{Y}|-2$ and for
$n\geq|\mathcal{Y}|\max_{1\le i\leq|\mathcal{X}|-1}\frac{P_{X}(x_{i})}{P_{X}(x_{i+1})}$. 
\end{claim}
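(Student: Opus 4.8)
The claimed inequality $P_{X}^{n}(x_{j+1}^{n})\le\frac{1}{|\mathcal{Y}|}\sum_{i=j+1}^{|\mathcal{X}|^{n}}P_{X}^{n}(x_{i}^{n})$ is the tail estimate
\[
\sum_{i=j+1}^{|\mathcal{X}|^{n}}P_{X}^{n}(x_{i}^{n})\ \ge\ |\mathcal{Y}|\,P_{X}^{n}(x_{j+1}^{n}),\qquad 1\le j\le|\mathcal{X}|^{n}-|\mathcal{Y}|-2,
\]
and this is what I would prove. Write $p:=P_{X}^{n}(x_{j+1}^{n})$, let $\tau$ denote the composition (type) of the string $x_{j+1}^{n}$, set $m:=\min_{x}P_{X}(x)=P_{X}(x_{|\mathcal{X}|})$, and $M:=\max_{1\le i\le|\mathcal{X}|-1}\frac{P_{X}(x_{i})}{P_{X}(x_{i+1})}\ge1$. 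Call a coordinate of a string \emph{downgradable} if its symbol has probability strictly larger than $m$. The one elementary fact I need is: replacing a downgradable coordinate of a string of probability $v$ by the first symbol of strictly smaller probability multiplies $v$ by a factor in $[1/M,1)$, producing a string of probability in $[v/M,v)$. Since $j+1<|\mathcal{X}|^{n}$ forces $p>m^{n}$, the string $x_{j+1}^{n}$ has at least one downgradable coordinate. (If several symbols share a probability, ``the first symbol of strictly smaller probability'' is to be read verbatim; this is the only place ties among symbol probabilities intervene.)

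I would then bound the tail from below by two \emph{disjoint} families of strings, each ranked $\ge j+1$. First, the $r\ge1$ strings of probability exactly $p$ that are ranked $\ge j+1$ contribute $r\,p$. Second, every string whose type is obtained from $\tau$ by one downgrade step (move one unit of mass from a downgradable symbol to the first symbol of strictly smaller probability) has probability in $[p/M,p)$, hence probability $<p$, hence is ranked strictly below every probability-$p$ string and in particular $\ge j+1$; more generally, let $N$ be the number of strings of probability lying in $[p/M,p)$, so that this second family contributes at least $N\,(p/M)$. As the two families are disjoint (different probabilities), the tail is at least $p\,(r+N/M)$, and it suffices to show $r+N/M\ge|\mathcal{Y}|$.

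For this last inequality I would run a short case analysis using the range restriction $j\le|\mathcal{X}|^{n}-|\mathcal{Y}|-2$ (equivalently: at least $|\mathcal{Y}|+2$ strings are ranked $\ge j+1$) together with the hypothesis $n\ge|\mathcal{Y}|M$. If $p=m^{n}$, i.e. $\tau$ is supported entirely on least-likely symbols, then every string ranked $\ge j+1$ has probability $m^{n}=p$, so $r\ge|\mathcal{X}|^{n}-j\ge|\mathcal{Y}|+2$ and we are done. Otherwise $\tau$ uses a symbol of probability $>m$, and for all but one ``shape'' of $\tau$ one can exhibit a single downgrade step whose target type has multinomial coefficient at least $n$ (if $\tau$ is concentrated on one non-least-likely symbol, the downgrade gives a type with $n$ strings; if $\tau$ uses at least two symbols then $\binom{n}{\tau}\ge n$ and a suitable downgrade at a most-frequent downgradable symbol keeps the coefficient at least $n$); in either case $N\ge n\ge|\mathcal{Y}|M$, so $r+N/M\ge|\mathcal{Y}|$. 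The one uncovered shape is $\tau=$ (one coordinate carrying a symbol whose probability is the second-smallest distinct value, the remaining $n-1$ coordinates all carrying a single least-likely symbol); for this shape a direct count of the strings of probability $>p$ shows $x_{j+1}^{n}$ has rank at least $|\mathcal{X}|^{n}-sn$ for an appropriate $s\ge1$, so within the allowed range $j+1\le|\mathcal{X}|^{n}-|\mathcal{Y}|-1$ one automatically has $r\ge|\mathcal{Y}|+1$. The near-uniform regime $M\approx1$ always lands in the ``$r$ large'' alternative. I expect this final dichotomy --- pinning down how the rank of $x_{j+1}^{n}$ inside $\mathcal{X}^{n}$ constrains its type, checking that the multinomial bound costs no more than the advertised $n\ge|\mathcal{Y}|M$, and disposing of the symbol-probability ties cleanly --- to be the main obstacle; specializing the resulting estimate then yields the explicit bound in the remark following Proposition \ref{prop:NUdiscrete}.
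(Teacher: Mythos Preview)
Your approach is essentially the same as the paper's. Both arguments hinge on the observation that from $x_{j+1}^n$ one can pass (via a single ``downgrade'' of one coordinate to the next-less-likely symbol) to a string whose probability lies in $[p/M,p)$, and that the type class containing such a string generically has at least $n$ members, which combined with $n\ge|\mathcal{Y}|M$ gives the tail bound; the residual corner case (what you call the ``uncovered shape'') is then handled by a direct count of the last few ranks.

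The paper organises the dichotomy by the rank $j$ rather than by the shape of $\tau$: Case~1 covers $1\le j\le|\mathcal{X}|^n-n-2$ and takes $x_k^n$ to be the first string of a \emph{different} type ranked after $x_{j+1}^n$ (rather than an explicit downgrade), bounding $P_X^n(x_{j+1}^n)\le M\,P_X^n(x_k^n)$ and then using $|\mathcal{T}_{T_{x_k^n}}|\ge n$; Case~2 covers $|\mathcal{X}|^n-n-1\le j\le|\mathcal{X}|^n-|\mathcal{Y}|-2$, where the remaining strings are exactly the $n$ permutations of $(x_{|\mathcal{X}|-1},x_{|\mathcal{X}|},\ldots,x_{|\mathcal{X}|})$ together with the all-$x_{|\mathcal{X}|}$ string, so at least $|\mathcal{Y}|$ of the tail strings have probability $p$. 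Your split on $\tau$ and the paper's split on $j$ carve out the same exceptional region. Your explicit construction of the downgraded type is a bit more transparent about \emph{why} the relevant type class has $\ge n$ elements (the paper asserts this somewhat briskly), but you then have more bookkeeping to do in the multinomial-coefficient case analysis; the paper sidesteps this by working with the next type in sorted order rather than a specific downgrade. Neither route requires anything beyond what you have already identified.
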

We split the proof into two cases. 
\begin{itemize}
\item \textbf{Case 1 ($1\leq j\leq|\mathcal{X}|^{n}-n-2$):} For $1\leq j\leq|\mathcal{X}|^{n}-n-2$,
denote $k$ as the smallest integer such that $k\ge j+2$ and $T_{x_{k}^{n}}\neq T_{x_{j+1}^{n}}$.
Then we have 
\begin{equation}
P_{X}^{n}(x_{j+1}^{n})\leq P_{X}^{n}(x_{k}^{n})\max_{1\le i\leq|\mathcal{X}|-1}\frac{P_{X}(x_{i})}{P_{X}(x_{i+1})}.\label{eq:-22}
\end{equation}
This is because if $x_{k}^{n}$ and $x_{j+1}^{n}$ are different in
only one component, say the $i$th components $x_{k,i}$ and $x_{j+1,i}$,
then by the generation process of $x_{1}^{n},x_{2}^{n},...,x_{|\mathcal{X}|^{n}}^{n}$,
we have $\frac{P_{X}(x_{k,i})}{P_{X}(x_{j+1,i})}\geq\min_{1\le i\leq|\mathcal{X}|-1}\frac{P_{X}(x_{i+1})}{P_{X}(x_{i})}$.
Hence 
\[
P_{X}^{n}(x_{j+1}^{n})=P_{X}^{n}(x_{k}^{n})\frac{P_{X}(x_{j+1,i})}{P_{X}(x_{k,i})}\leq P_{X}^{n}(x_{k}^{n})\max_{1\le i\leq|\mathcal{X}|-1}\frac{P_{X}(x_{i})}{P_{X}(x_{i+1})}.
\]
If $x_{k}^{n}$ and $x_{j+1}^{n}$ are different in more than one
components, then by the generation process of $x_{1}^{n},x_{2}^{n},...,x_{|\mathcal{X}|^{n}}^{n}$,
we have $P_{X}^{n}(x_{j+1}^{n})\geq P_{X}^{n}(x_{k}^{n})\geq P_{X}^{n}(\tilde{x}_{k}^{n})$
for any sequence $\tilde{x}_{k}^{n}$ such that $\tilde{x}_{k}^{n}$
are different from $x_{j+1}^{n}$ in only one component. Hence by
the same argument above, we obtain 
\[
P_{X}^{n}(x_{j+1}^{n})\leq P_{X}^{n}(\tilde{x}_{k}^{n})\max_{1\le i\leq|\mathcal{X}|-1}\frac{P_{X}(x_{i})}{P_{X}(x_{i+1})}\leq P_{X}^{n}(x_{k}^{n})\max_{1\le i\leq|\mathcal{X}|-1}\frac{P_{X}(x_{i})}{P_{X}(x_{i+1})}.
\]
Therefore, \eqref{eq:-22} holds. Next, we prove \eqref{eq:-22} implies
Claim \ref{claim:If--holds}. 
\begin{align*}
\frac{P_{X}^{n}(x_{j+1}^{n})}{\sum_{i=j+1}^{|\mathcal{X}|^{n}}P_{X}^{n}(x_{i}^{n})} & \leq\frac{P_{X}^{n}(x_{j+1}^{n})}{\sum_{i=k}^{|\mathcal{X}|^{n}}P_{X}^{n}(x_{i}^{n})}\\
 & \leq\frac{P_{X}^{n}(x_{k}^{n})\max_{1\le i\leq|\mathcal{X}|-1}\frac{P_{X}(x_{i})}{P_{X}(x_{i+1})}}{\sum_{i=k}^{|\mathcal{X}|^{n}}P_{X}^{n}(x_{i}^{n})}
\end{align*}
According to the definition of $x_{k}^{n}$, we have $\mathcal{T}_{T_{x_{k}^{n}}}\subseteq\left\{ x_{k}^{n},x_{k+1}^{n},...,x_{|\mathcal{X}|^{n}}^{n}\right\} $.
Furthermore, each type class has at least $n$ elements, hence $\left|\mathcal{T}_{T_{x_{k}^{n}}}\right|\geq n$.
Therefore, 
\begin{align}
\frac{P_{X}^{n}(x_{k}^{n})\max_{1\le i\leq|\mathcal{X}|-1}\frac{P_{X}(x_{i})}{P_{X}(x_{i+1})}}{\sum_{i=k}^{|\mathcal{X}|^{n}}P_{X}^{n}(x_{i}^{n})} & \leq\frac{P_{X}^{n}(x_{k}^{n})\max_{1\le i\leq|\mathcal{X}|-1}\frac{P_{X}(x_{i})}{P_{X}(x_{i+1})}}{nP_{X}^{n}(x_{k}^{n})}\nonumber \\
 & =\frac{\max_{1\le i\leq|\mathcal{X}|-1}\frac{P_{X}(x_{i})}{P_{X}(x_{i+1})}}{n}\label{eq:-13}
\end{align}
For $n\geq|\mathcal{Y}|\max_{1\le i\leq|\mathcal{X}|-1}\frac{P_{X}(x_{i})}{P_{X}(x_{i+1})}$,
we have \eqref{eq:-13} $\leq\frac{1}{|\mathcal{Y}|}$. Therefore,
$P_{X}^{n}(x_{j+1}^{n})\leq\frac{\sum_{i=j+1}^{|\mathcal{X}|^{n}}P_{X}^{n}(x_{i}^{n})}{|\mathcal{Y}|}$. 
\item \textbf{Case 2 ($|\mathcal{X}|^{n}-n-1\leq j\leq|\mathcal{X}|^{n}-|\mathcal{Y}|-2$):}
For $|\mathcal{X}|^{n}-n-1\leq j\leq|\mathcal{X}|^{n}-|\mathcal{Y}|-2$,
we know $x_{j+1}^{n},...,x_{|\mathcal{X}|^{n}-1}^{n}$ belong to a
same type class, $P(x_{j+1}^{n})=...=P(x_{|\mathcal{X}|^{n}-1}^{n})=\left(P_{X}(x_{|\mathcal{X}|})\right)^{n-1}P_{X}(x_{|\mathcal{X}|-1})$,
and $P(x_{|\mathcal{X}|^{n}}^{n})=\left(P_{X}(x_{|\mathcal{X}|})\right)^{n}$.
Hence we have $\frac{P_{X}^{n}(x_{j+1}^{n})}{\sum_{i=j+1}^{|\mathcal{X}|^{n}}P_{X}^{n}(x_{i}^{n})}\leq\frac{P_{X}^{n}(x_{j+1}^{n})}{|\mathcal{Y}|P_{X}^{n}(x_{j+1}^{n})}=\frac{1}{|\mathcal{Y}|}$. 
\end{itemize}
Combining the two cases above, we have Claim \ref{claim:If--holds}.
By Claim \ref{claim:If--holds-1}, we further have $\sum_{i=1}^{m+1}P_{X}^{n}(x_{i}^{n})1\left\{ y=y_{i}\right\} \leq Q_{Y}(y)$
for all $y\in\mathcal{Y}$ with $m=|\mathcal{X}|^{n}-|\mathcal{Y}|-2$.

Since the remaining $|\mathcal{Y}|+1$ sequences are mapped to sequences
in $\mathcal{Y}$ in a similar way as in the proof of Proposition
\ref{prop:NUdiscontinuous}, similar to Proposition \ref{prop:NUdiscontinuous},
here we can show that the output measure $P_{Y}$ induced by the mapping
satisfies $\left|P_{Y}-Q_{Y}\right|_{\mathrm{KS}}\leq\frac{1}{2}P_{X}(x_{|\mathcal{X}|-1})\left(P_{X}(x_{|\mathcal{X}|})\right)^{n-1}$.

\section{\label{sec:Proof-of-Theorem-discrete-1}Proof of Proposition \ref{prop:Udiscontinuous}}

\textbf{Universal Mapping: }For $X_{n}$, partition the real line
into intervals with the same length $\Delta_{n}$, i.e., $\bigcup_{i=-\infty}^{\infty}(i\Delta_{n},(i+1)\Delta_{n}]$.
We first simulate a uniform distribution on $[a,b]$ by mapping each
interval $(i\Delta_{n},(i+1)\Delta_{n}]$ into $[a,b]$ using the
linear function $x\mapsto a+\frac{b-a}{\Delta_{n}}(x-i\Delta_{n})$.
That is, the function used here is 
\[
f_{1}(x):=\sum_{i=-\infty}^{\infty}\left(a+\frac{b-a}{\Delta_{n}}(x-i\Delta_{n})\right)1\left\{ x\in(i\Delta_{n},(i+1)\Delta_{n}]\right\} .
\]
We then transform the output distribution to the target distribution
$Q_{Y}$, by using function $x\mapsto G_{Y}^{-1}\left(\frac{x-a}{b-a}\right)$,
where $G_{Y}^{-1}\left(t\right):=\min\left\{ y:G_{Y}(y)\geq t\right\} $.
Therefore, each $x\in(i\Delta_{n},(i+1)\Delta_{n}]$ is mapped to
$G_{Y}^{-1}\left(\frac{1}{\Delta}(x-i\Delta)\right)$. Hence the final
mapping is 
\[
f(x):=\sum_{i=-\infty}^{\infty}G_{Y}^{-1}\left(\frac{1}{\Delta_{n}}(x-i\Delta_{n})\right)1\left\{ x\in(i\Delta_{n},(i+1)\Delta_{n}]\right\} .
\]
For such a universal simulator, we have 
\begin{align}
\left|P_{Y_{n}}-Q_{Y}\right|_{\mathrm{KS}} & \leq\left|P_{f_{1}(X_{n})}-\mathrm{Unif}([a,b])\right|_{\mathrm{KS}}\label{eq:-27}\\
 & =\sup_{z\in[a,b]}\left|\sum_{i=-\infty}^{\infty}\left[F_{X_{n}}\left(i\Delta_{n}+\Delta_{n}\frac{z-a}{b-a}\right)-F_{X_{n}}(i\Delta_{n})\right]-\frac{z-a}{b-a}\right|\nonumber \\
 & =\sup_{x\in[0,\Delta_{n}]}\left|\sum_{i=-\infty}^{\infty}\left[F_{X_{n}}(i\Delta_{n}+x)-F_{X_{n}}(i\Delta_{n})\right]-\frac{x}{\Delta_{n}}\right|\nonumber \\
 & =\sup_{x\in[0,\Delta_{n}]}\left|\sum_{i=-\infty}^{\infty}\left[F_{X_{n}}((i+1)\Delta_{n})-F_{X_{n}}(i\Delta_{n})\right]\left(\frac{F_{X_{n}}(i\Delta_{n}+x)-F_{X_{n}}(i\Delta_{n})}{F_{X_{n}}((i+1)\Delta_{n})-F_{X_{n}}(i\Delta_{n})}-\frac{x}{\Delta_{n}}\right)\right|\nonumber \\
 & \leq\sum_{i=-\infty}^{\infty}\left[F_{X_{n}}((i+1)\Delta_{n})-F_{X_{n}}(i\Delta_{n})\right]\sup_{x\in[0,\Delta_{n}]}\left|\frac{F_{X_{n}}(i\Delta_{n}+x)-F_{X_{n}}(i\Delta_{n})}{F_{X_{n}}((i+1)\Delta_{n})-F_{X_{n}}(i\Delta_{n})}-\frac{x}{\Delta_{n}}\right|\nonumber \\
 & \leq\sup_{x_{1}:F_{X_{n}}(x_{1}+\Delta_{n})>F_{X_{n}}(x_{1})}\sup_{x\in(x_{1},x_{1}+\Delta_{n}]}\left|\frac{F_{X_{n}}(x_{1}+x)-F_{X_{n}}(x_{1})}{F_{X_{n}}(x_{1}+\Delta_{n})-F_{X_{n}}(x_{1})}-\frac{x}{\Delta_{n}}\right|\nonumber \\
 & \to0,\textrm{ as }n\to\infty,\nonumber 
\end{align}
where \eqref{eq:-27} follows from Lemma \ref{lem:continuousisharder}.

\section{\label{sec:Proof-of-Theorem-Markov}Proof of Theorem \ref{thm:UMarkov}}

We still use a type-based mapping scheme (similar to that used in
the proof of Theorem \ref{thm:Udiscrete}) to prove Theorem \ref{thm:UMarkov}.

Here we adopt the notation from \cite{martin2010twice}. Assume the
Markov process $\boldsymbol{X}$ starts from the fixed initial state
$(x_{-k+1},x_{-k+2},...,x_{0})$. The $k$-th order Markov type $T_{x^{n}}$
of a sequence $x^{n}\in\mathcal{X}^{n}$ is defined as the number
of occurrences in $x^{n}$ of each string $\boldsymbol{s}\in\mathcal{X}^{k+1}$,
denoted $n_{x^{n}}(\boldsymbol{s})$, namely

\[
n_{x^{n}}(\boldsymbol{s})=\left|\left\{ i:1\leq i\le n,(x_{i-k},...,x_{i-1},x_{i})=\boldsymbol{s}\right\} \right|
\]
where $\left|\cdot\right|$ denotes cardinality. We use $T_{k}$ to
denote a $k$-th order Markov type of sequences in $\mathcal{X}^{n}$.
For a type $T_{k}$, the $k$-th order Markov type class $\mathcal{T}_{T_{k}}$
is the set of all sequences $x^{n}\in\mathcal{X}^{n}$ that have the
same type $T_{k}$. Obviously, all sequences in a type class have
a equal probability, i.e, $P(x^{n})=P(\tilde{x}^{n})$ for all $x^{n},\tilde{x}^{n}\in\mathcal{T}_{T_{k}}$.
That is, under the condition $X^{n}\in\mathcal{T}_{T_{k}}$, it is
uniformly distributed over the type class $\mathcal{T}_{T_{k}}$,
regardless of the distribution of the Markov process. Furthermore,
the set of $k$-th order Markov types of sequences in $\mathcal{X}^{n}$
is denoted as $\mathcal{P}_{k,n}\left(\mathcal{X}\right):=\left\{ T_{x^{n}}:x^{n}\in\mathcal{X}^{n}\right\} $.
It has been shown that $\left|\mathcal{P}_{k,n}\left(\mathcal{X}\right)\right|\leq\left(n+1\right)^{\left|\mathcal{X}\right|^{k+1}}$
in \cite{martin2010twice}.

Now we construct a mapping $f$ that maps the uniform random vector
on $\mathcal{T}_{T_{k}}$ to a random variable such that $\sup_{y\in\mathbb{R}}\left|F_{Y}(y|T_{k})-G_{Y}(y)\right|$
is minimized. Here $F_{Y}(y|T_{k})$ denotes the CDF of the output
random variable for the uniform random vector on $\mathcal{T}_{T_{k}}$.
Since the probability values of uniform random vectors are all equal
to $\left|\mathcal{T}_{T_{k}}\right|^{-1}$, $\sup_{y\in\mathbb{R}}\left|F_{Y}(y|T_{k})-G_{Y}(y)\right|=\frac{1}{2}\left|\mathcal{T}_{T_{k}}\right|^{-1}$.
Following the same steps as \eqref{eq:-20}-\eqref{eq:-21}, we obtain
\begin{align*}
\left|F_{Y}(y)-G_{Y}(y)\right| & \dotle\max_{x^{n}}P(x^{n}).
\end{align*}

The converse part follows from Theorem \ref{prop:NUdiscrete}, since
even non-universal simulation cannot make the approximation error
decay faster than $\max_{x^{n}}P(x^{n})$, hence universal simulation
cannot as well.

\section{\label{sec:Squeezing-Periodic-Functions}Properties on Squeezing
Periodic Functions}

For a function $g:[a,b]\to\mathbb{R}$ and a number $\Delta>0$, we
define a periodic function $g_{\Delta}$ induced by $g$ as 
\[
g_{\Delta}(x):=\sum_{i=-\infty}^{\infty}g\left(a+\frac{b-a}{\Delta}(x-i\Delta)\right)1\left\{ x\in(i\Delta,(i+1)\Delta]\right\} .
\]

Now we squeeze this periodic function in the $x$-axis direction by
letting $\Delta\to0$. It is easy to see that the limit $\lim_{\Delta\to0}g\left(a+\frac{b-a}{\Delta}(x-i\Delta)\right)$
of function $g$ does not exist. However, the integral $\lim_{\Delta\to0}\int f(x)g_{\Delta}(x)dx$
 for any integrable function $f$ exists. Moreover, this limit is
equal to the product of the integral of $f(x)$ and the normalized
integral of $g_{\Delta}(x)$ (or $g\left(x\right)$). That is, $f(x)$
and $g_{\Delta}(x)$ are asymptotically uncorrelated as $\Delta\to0$. 

Define 
\begin{align*}
L_{\Delta} & :=\int_{-\infty}^{\infty}f(x)g_{\Delta}(x)dx
\end{align*}
and 
\begin{align*}
L & :=\frac{1}{b-a}\int_{-\infty}^{\infty}f(x)dx\int_{a}^{b}g\left(x'\right)dx'.
\end{align*}
\begin{lem}
\label{lem:SqueezeTheorem}Assume $f(x)$ and $g(x)$ are arbitrary
integrable functions, and $\left|g\left(x\right)\right|$ is bounded
a.e. Then $f(x)$ and $g_{\Delta}(x)$ are asymptotically uncorrelated
as $\Delta\to0$. That is, 
\begin{align*}
\lim_{\Delta\to0}L_{\Delta} & =L.
\end{align*}
\end{lem}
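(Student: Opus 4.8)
The plan is to reduce the statement to the case where $f$ is a step function (a finite linear combination of indicators of bounded half-open intervals), for which $L_{\Delta}$ can be computed almost explicitly, and then to pass to a general integrable $f$ by $L^{1}$-approximation. Write $M:=\esssup_{x\in[a,b]}\left|g(x)\right|<\infty$ and $\bar{g}:=\frac{1}{b-a}\int_{a}^{b}g(x')dx'$, so that $\left|\bar{g}\right|\le M$ and, since $g_{\Delta}$ is a rescaled copy of $g$ on each period, $\left|g_{\Delta}(x)\right|\le M$ for a.e.\ $x$ (and $g_{\Delta}$ is measurable, being a countable sum of measurable functions with disjoint supports, so $fg_{\Delta}$ is integrable). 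The key elementary observation is that on every grid interval $I_{i}:=(i\Delta,(i+1)\Delta]$ the substitution $u=a+\frac{b-a}{\Delta}(x-i\Delta)$ gives $\int_{I_{i}}g_{\Delta}(x)dx=\frac{\Delta}{b-a}\int_{a}^{b}g(u)du=\Delta\bar{g}$; thus $g_{\Delta}$ has the same average $\bar{g}$ over each period.

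Next I would verify the claim for $f=\mathbf{1}_{(\alpha,\beta]}$. The interval $(\alpha,\beta]$ is a disjoint union of those grid intervals $I_{i}$ that it fully contains, together with at most two ``boundary'' pieces, each of length less than $\Delta$. Summing $\int_{I_{i}}g_{\Delta}=\Delta\bar{g}$ over the full intervals yields $N\Delta\bar{g}$, where $N$ is the number of full intervals; since $N\Delta$ differs from $\beta-\alpha$ by less than $2\Delta$ and $\left|\bar{g}\right|\le M$, this equals $(\beta-\alpha)\bar{g}+O(\Delta M)$, while the two boundary pieces contribute at most $2\Delta M$ in absolute value because $\left|g_{\Delta}\right|\le M$ a.e. Hence $L_{\Delta}=\int_{\alpha}^{\beta}g_{\Delta}(x)dx=(\beta-\alpha)\bar{g}+O(\Delta M)\to(\beta-\alpha)\bar{g}=L$ as $\Delta\to0$. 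By linearity in $f$, $\lim_{\Delta\to0}L_{\Delta}=L$ holds for every step function $f$.

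Finally I would remove the step-function restriction. Given an integrable $f$ and $\varepsilon>0$, choose a step function $\phi$ with $\left\Vert f-\phi\right\Vert _{1}<\varepsilon$ (step functions are dense in $L^{1}(\mathbb{R})$). Using $\left|g_{\Delta}\right|\le M$ a.e.\ and $\left|\bar{g}\right|\le M$, both $L_{\Delta}$ and $L$ depend on $f$ in a Lipschitz manner with respect to the $L^{1}$-norm, with constant $M$: $\left|L_{\Delta}(f)-L_{\Delta}(\phi)\right|\le M\left\Vert f-\phi\right\Vert _{1}<M\varepsilon$ and $\left|L(f)-L(\phi)\right|=\left|\bar{g}\right|\bigl|\int(f-\phi)\bigr|\le M\left\Vert f-\phi\right\Vert _{1}<M\varepsilon$. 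Combining these bounds with $L_{\Delta}(\phi)\to L(\phi)$ gives $\limsup_{\Delta\to0}\left|L_{\Delta}(f)-L(f)\right|\le2M\varepsilon$, and letting $\varepsilon\to0$ completes the proof.

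I do not expect a deep obstacle: this is a Riemann--Lebesgue-type averaging argument. The two points that require a little care are (i) keeping all estimates uniform in $\Delta$ — this is exactly what the a.e.\ bound $\left|g_{\Delta}\right|\le M$ provides, and it is why boundedness of $g$ is assumed rather than mere integrability, since otherwise the $L^{1}$-approximation step would fail to transfer — and (ii) the harmless $O(\Delta)$ error coming from the at most two grid intervals straddling the endpoints of each subinterval in the support of $\phi$, which vanishes in the limit precisely because of the same uniform bound.
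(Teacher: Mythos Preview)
Your proof is correct, but it follows a genuinely different route from the paper's. The paper does not fix a step function $\phi$ and then run a three-epsilon argument; instead, for each $\Delta$ it replaces $f$ by the \emph{grid-adapted} step function
\[
f_{\Delta}(x):=\sum_{i=-\infty}^{\infty}\Bigl(\frac{1}{\Delta}\int_{i\Delta}^{(i+1)\Delta}f\Bigr)\,1\{x\in(i\Delta,(i+1)\Delta]\},
\]
observes that $L_{\Delta}(f_{\Delta})=L(f_{\Delta})=L(f)$ \emph{exactly} (because $f_{\Delta}$ is constant on each period and has the same integral as $f$), and hence obtains the single clean bound $|L_{\Delta}-L|\le M\,\|f-f_{\Delta}\|_{1}$. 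Convergence of the right-hand side to zero is then deduced from Lebesgue's differentiation theorem plus dominated convergence. Your approach trades this for a more elementary toolkit: you avoid Lebesgue differentiation entirely, relying only on density of step functions in $L^{1}$ and the uniform bound $|g_{\Delta}|\le M$. What the paper's approach buys is the explicit quantitative estimate $|L_{\Delta}-L|\le M\,\|f-f_{\Delta}\|_{1}$ stated in the Remark following the lemma, which is subsequently used (Proposition~\ref{prop:Ucontinuous--1}) to extract convergence \emph{rates} in $\Delta$; your argument, as written, gives only $\limsup_{\Delta\to0}|L_{\Delta}-L|\le 2M\varepsilon$ for each fixed $\varepsilon$, with no explicit $\Delta$-dependence surviving after the approximation.
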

\begin{rem}
More specifically, it holds that 
\begin{align*}
 & \left|L_{\Delta}-L\right|\le\esssup_{x}\left|g\left(x\right)\right|\sum_{i=-\infty}^{\infty}\int_{i\Delta}^{(i+1)\Delta}\left|f(x)-\frac{1}{\Delta}\int_{i\Delta}^{(i+1)\Delta}f(x)dx\right|dx.
\end{align*}
\end{rem}
\begin{rem}
If $f(x)$ is bounded and continuous a.e. on an interval $[c,d]$
(i.e., Riemann-integrable), and $f(x)=0,x\in[c,d]$, then the condition
$\sup_{x}\left|g\left(x\right)\right|$ is finite can be relaxed to
that $\int_{a}^{b}\left|g\left(x\right)\right|dx$ is finite. Furthermore,
for this case, Lemma \ref{lem:SqueezeTheorem} also holds for $g\left(x\right)=g_{1}(x)+\sum_{i=-\infty}^{\infty}c_{i}\delta(x-x_{i})$
with $x_{i}\in[a,b]$ such that $\int_{a}^{b}\left|g_{1}(x)\right|dx+\sum_{i=-\infty}^{\infty}c_{i}$
is finite, where $\delta(\cdot)$ denotes the Dirac delta function. 
\end{rem}
The proof of Lemma \ref{lem:SqueezeTheorem} is provided in Appendix
\ref{sec:Proof-of-Theorem-Squeeze}.

Now we generalize Lemma \ref{lem:SqueezeTheorem} by considering $g:[a,b]\times\mathbb{R}\to\mathbb{R}$
to be a real multivariate function. For such $g$ and a number $\Delta>0$,
we define a periodic function $g_{\Delta}$ induced by $g$ as 
\[
g_{\Delta}(x,y):=\sum_{i=-\infty}^{\infty}g\left(a+\frac{b-a}{\Delta}(x-i\Delta),y\right)1\left\{ x\in(i\Delta,(i+1)\Delta]\right\} .
\]
Define 
\begin{align*}
L_{\Delta}(y) & :=\int_{-\infty}^{\infty}f(x)g_{\Delta}(x,y)dx
\end{align*}
and 
\begin{align*}
L(y) & :=\frac{1}{b-a}\int_{-\infty}^{\infty}f(x)dx\int_{a}^{b}g\left(x',y\right)dx'.
\end{align*}
\begin{lem}
\label{lem:SqueezeTheorem-1-1}Assume $f(x)$ and $g(x,y)$ are arbitrary
integrable functions, and $\int_{-\infty}^{\infty}\left|g\left(x,y\right)\right|dy$
is bounded a.e. Then $f(x)$ and $g_{\Delta}(x,y)$ are asymptotically
uncorrelated under the $L_{1}$-norm distance as $\Delta\to0$. That
is, 
\begin{equation}
\lim_{\Delta\to0}\int_{-\infty}^{\infty}\left|L_{\Delta}(y)-L(y)\right|dy=0.\label{eq:-10}
\end{equation}
Furthermore, \eqref{eq:-10} also holds for $g\left(x,y\right)=g_{1}(x,y)+g_{3}(x)\delta(y-g_{2}(x))$
such that $g_{2}(x)$ is a differentiable a.e. function and $\frac{\mathrm{d}}{\mathrm{d}x}g_{2}(x)\neq0$
for almost every $x\in[a,b]$ and $\sup_{x}\int_{-\infty}^{\infty}\left|g\left(x,y\right)\right|dy=\sup_{x}\left\{ \int_{-\infty}^{\infty}\left|g_{1}\left(x,y\right)\right|dy+\left|g_{3}(x)\right|\right\} $
is finite. 
\end{lem}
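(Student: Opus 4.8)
The plan is to reduce both assertions, exactly as in the scalar case (Lemma~\ref{lem:SqueezeTheorem}), to the single analytic fact that the piecewise average of $f$ converges to $f$ in $L^{1}$: writing $I_{i}:=(i\Delta,(i+1)\Delta]$ and $\bar{f}_{i}:=\frac{1}{\Delta}\int_{I_{i}}f$, one has
\[
S_{\Delta}(f):=\sum_{i=-\infty}^{\infty}\int_{I_{i}}\bigl|f(x)-\bar{f}_{i}\bigr|\,dx\ \longrightarrow\ 0\quad\text{as }\Delta\to0 ,
\]
which follows by splitting $f=f^{+}-f^{-}$ and invoking Lebesgue's differentiation theorem together with dominated convergence, as in \eqref{eq:-9-2}--\eqref{eq:-7} (this is exactly the estimate driving the proof of Lemma~\ref{lem:SqueezeTheorem}). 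The first step is then the identity, valid for a.e.\ $y$,
\[
L_{\Delta}(y)-L(y)=\sum_{i=-\infty}^{\infty}\int_{I_{i}}\bigl(f(x)-\bar{f}_{i}\bigr)\,g_{\Delta}(x,y)\,dx ,
\]
obtained from the change of variables $x'=a+\tfrac{b-a}{\Delta}(x-i\Delta)$ on each $I_{i}$ (which gives $\int_{I_{i}}g_{\Delta}(x,y)\,dx=\tfrac{\Delta}{b-a}\int_{a}^{b}g(x',y)\,dx'$) together with $\sum_{i}\Delta\bar{f}_{i}=\int_{\mathbb{R}}f$; the rearrangement is legitimate since $\sum_{i}\Delta|\bar{f}_{i}|\le\|f\|_{1}$ and $g(\cdot,y)\in L^{1}([a,b])$ for a.e.\ $y$.

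For the first assertion I would put $M:=\esssup_{x\in[a,b]}\int_{\mathbb{R}}|g(x,y)|\,dy<\infty$, take absolute values in the identity, integrate in $y$ first (Tonelli), and undo the change of variables on each $I_{i}$ so that $\int_{\mathbb{R}}|g_{\Delta}(x,y)|\,dy$ becomes $\int_{\mathbb{R}}|g(x',y)|\,dy\le M$. This gives
\[
\int_{\mathbb{R}}\bigl|L_{\Delta}(y)-L(y)\bigr|\,dy\ \le\ \sum_{i}\int_{I_{i}}\bigl|f(x)-\bar{f}_{i}\bigr|\Bigl(\int_{\mathbb{R}}|g_{\Delta}(x,y)|\,dy\Bigr)dx\ \le\ M\,S_{\Delta}(f)\ \longrightarrow\ 0 .
\]

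For the ``furthermore'' part, linearity of $g\mapsto g_{\Delta}$ (hence of $g\mapsto L_{\Delta}(y),L(y)$) and the triangle inequality reduce matters to treating $g_{1}$ and $g_{3}(x)\delta(y-g_{2}(x))$ separately; the $g_{1}$ term is covered by the first assertion because $\int_{\mathbb{R}}|g_{1}(x,y)|\,dy$ is bounded a.e. For $g(x,y)=g_{3}(x)\delta(y-g_{2}(x))$, I would interpret $L_{\Delta}$ as a genuine $L^{1}(\mathbb{R}_{y})$-function via the pushforward along $g_{2}$: carrying out the same change of variables on each $I_{i}$ and using $\int_{a}^{b}h(x')\delta(y-g_{2}(x'))\,dx'=\sum_{x':\,g_{2}(x')=y}\frac{h(x')}{|g_{2}'(x')|}$ yields
\[
L_{\Delta}(y)-L(y)=\frac{\Delta}{b-a}\sum_{i}\ \sum_{x':\,g_{2}(x')=y}\frac{\bigl(f\bigl(i\Delta+\tfrac{\Delta}{b-a}(x'-a)\bigr)-\bar{f}_{i}\bigr)\,g_{3}(x')}{|g_{2}'(x')|} .
\]
Taking absolute values, integrating in $y$, applying the coarea identity $\int_{\mathbb{R}}\sum_{x':\,g_{2}(x')=y}\tfrac{\phi(x')}{|g_{2}'(x')|}\,dy=\int_{a}^{b}\phi$ for $\phi\ge0$, and substituting $c=\tfrac{\Delta}{b-a}(x'-a)\in[0,\Delta]$, one obtains
\[
\int_{\mathbb{R}}\bigl|L_{\Delta}(y)-L(y)\bigr|\,dy\ \le\ \esssup_{x}|g_{3}(x)|\sum_{i}\int_{I_{i}}\bigl|f(x)-\bar{f}_{i}\bigr|\,dx\ =\ \esssup_{x}|g_{3}(x)|\cdot S_{\Delta}(f)\ \longrightarrow\ 0 ,
\]
and adding back the $g_{1}$ contribution finishes the argument. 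I expect the delicate point to be precisely this last step: giving rigorous meaning to the $\delta$-term, i.e.\ checking that ``$g_{2}$ differentiable a.e.\ with $g_{2}'\neq0$ a.e.'' is strong enough (in practice, absolute continuity or piecewise $C^{1}$) for the change-of-variables/coarea identity for the non-injective map $g_{2}$ to hold, and for the resulting finite (or countable) sum over the preimages $x'$ of $y$ to be defined for a.e.\ $y$ and integrable. Everything else is a routine assembly of Tonelli, the scaling identity $\int_{I_{i}}g_{\Delta}(x,y)\,dx=\tfrac{\Delta}{b-a}\int_{a}^{b}g(x',y)\,dx'$, and the scalar fact $S_{\Delta}(f)\to0$.
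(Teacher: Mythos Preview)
Your proposal is correct and follows exactly the route the paper indicates: the paper omits the proof, stating only that it is ``similar to that of Lemma~\ref{lem:SqueezeTheorem},'' and the explicit bound you derive---$\int|L_{\Delta}(y)-L(y)|\,dy\le M\,S_{\Delta}(f)$ with $M=\esssup_{x}\int|g(x,y)|\,dy$ (resp.\ $\esssup_{x}|g_{3}(x)|$ for the $\delta$-part)---is precisely the content of Remark~\ref{rem:More-specifically,-it}. Your coarea treatment of the $\delta$-term is more careful than anything the paper spells out, and your caveat about the regularity needed on $g_{2}$ for that identity to hold is well placed; the paper simply asserts the bound without addressing this point.
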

\begin{rem}
\label{rem:More-specifically,-it}More specifically, it holds that
\begin{align}
 & \int_{-\infty}^{\infty}\left|L_{\Delta}(y)-L(y)\right|dy\nonumber \\
 & \le\esssup_{x}\int_{-\infty}^{\infty}\left|g\left(x,y\right)\right|dy\sum_{i=-\infty}^{\infty}\int_{i\Delta}^{(i+1)\Delta}\left|f(x)-\frac{1}{\Delta}\int_{i\Delta}^{(i+1)\Delta}f(x)dx\right|dx.\label{eq:-24}
\end{align}
If $g\left(x,y\right)=g_{1}(x,y)+g_{3}(x)\delta(y-g_{2}(x))$, then
\eqref{eq:-24} still holds with $\esssup_{x}\int_{-\infty}^{\infty}\left|g\left(x,y\right)\right|dy$
replaced by $\esssup_{x}\left|g_{3}(x)\right|$. 
\end{rem}
\begin{rem}
If $f(x)$ is bounded and continuous a.e. on an interval $[c,d]$,
and $f(x)=0,x\in[c,d]$, then the condition that $\int_{-\infty}^{\infty}\left|g\left(x,y\right)\right|dy$
is bounded a.e. can be relaxed to that $\int_{-\infty}^{\infty}\int_{a}^{b}\left|g\left(x,y\right)\right|dxdy$
is finite. Furthermore, for this case, Lemma \ref{lem:SqueezeTheorem-1-1}
also holds for $g\left(x,y\right)=g_{1}(x,y)+g_{3}(x)\delta(y-g_{2}(x))$
such that $g_{2}(x)$ is a differentiable a.e. function and $g_{2}'(x)\neq0$
for almost every $x\in[a,b]$ and $\int_{-\infty}^{\infty}\int_{a}^{b}\left|g\left(x,y\right)\right|dxdy=\int_{-\infty}^{\infty}\int_{a}^{b}\left|g_{1}\left(x,y\right)\right|dxdy+\int_{a}^{b}\left|g_{3}(x)\right|dx$
is finite. 
\end{rem}
The proof of Lemma \ref{lem:SqueezeTheorem-1-1} is similar to that
of Lemma \ref{lem:SqueezeTheorem}, and hence omitted. Furthermore,
Lemmas \ref{lem:SqueezeTheorem} and \ref{lem:SqueezeTheorem-1-1}
can be extended to multivariant function cases.

\subsection{\label{sec:Proof-of-Theorem-Squeeze}Proof of Lemma \ref{lem:SqueezeTheorem} }

Define 
\begin{align*}
L_{\Delta} & =\int_{-\infty}^{\infty}f(x)g_{\Delta}(x)dx\\
 & =\sum_{i=-\infty}^{\infty}\int_{i\Delta}^{(i+1)\Delta}f(x)g\left(a+\frac{b-a}{\Delta}(x-i\Delta)\right)dx
\end{align*}
and 
\begin{align*}
 & \sum_{i=-\infty}^{\infty}\int_{i\Delta}^{(i+1)\Delta}\left(\frac{1}{\Delta}\int_{i\Delta}^{(i+1)\Delta}f(x)dx\right)g\left(a+\frac{b-a}{\Delta}(x-i\Delta)\right)dx\\
 & =\sum_{i=-\infty}^{\infty}\left(\frac{1}{\Delta}\int_{i\Delta}^{(i+1)\Delta}f(x)dx\right)\int_{i\Delta}^{(i+1)\Delta}g\left(a+\frac{b-a}{\Delta}(x-i\Delta)\right)dx\\
 & =\sum_{i=-\infty}^{\infty}\left(\frac{1}{\Delta}\int_{i\Delta}^{(i+1)\Delta}f(x)dx\right)\frac{\Delta}{b-a}\int_{a}^{b}g\left(x'\right)dx'\\
 & =\frac{1}{b-a}\int_{-\infty}^{\infty}f(x)dx\int_{a}^{b}g\left(x'\right)dx'\\
 & =L.
\end{align*}
Then we bound $\left|L_{\Delta}-L\right|$ as follows. 
\begin{align}
 & \left|L_{\Delta}-L\right|\nonumber \\
 & =\biggl|\sum_{i=-\infty}^{\infty}\int_{i\Delta}^{(i+1)\Delta}f(x)g\left(a+\frac{b-a}{\Delta}(x-i\Delta)\right)dx\nonumber \\
 & \qquad-\sum_{i=-\infty}^{\infty}\int_{i\Delta}^{(i+1)\Delta}\left(\frac{1}{\Delta}\int_{i\Delta}^{(i+1)\Delta}f(x)dx\right)g\left(a+\frac{b-a}{\Delta}(x-i\Delta)\right)dx\biggr|\nonumber \\
 & =\left|\sum_{i=-\infty}^{\infty}\int_{i\Delta}^{(i+1)\Delta}\left(f(x)-\frac{1}{\Delta}\int_{i\Delta}^{(i+1)\Delta}f(x)dx\right)g\left(a+\frac{b-a}{\Delta}(x-i\Delta)\right)dx\right|\nonumber \\
 & \leq\sum_{i=-\infty}^{\infty}\int_{i\Delta}^{(i+1)\Delta}\left|f(x)-\frac{1}{\Delta}\int_{i\Delta}^{(i+1)\Delta}f(x)dx\right|\left|g\left(a+\frac{b-a}{\Delta}(x-i\Delta)\right)\right|dx\nonumber \\
 & \leq\esssup_{x}\left|g\left(x\right)\right|\sum_{i=-\infty}^{\infty}\int_{i\Delta}^{(i+1)\Delta}\left|f(x)-\frac{1}{\Delta}\int_{i\Delta}^{(i+1)\Delta}f(x)dx\right|dx\nonumber \\
 & =\esssup_{x}\left|g\left(x\right)\right|\sum_{i=-\infty}^{\infty}\int_{i\Delta}^{(i+1)\Delta}2\left[f(x)-\frac{1}{\Delta}\int_{i\Delta}^{(i+1)\Delta}f(x)dx\right]^{+}dx\nonumber \\
 & =\esssup_{x}\left|g\left(x\right)\right|\int_{-\infty}^{\infty}2\left[f(x)-f_{\Delta}(x)\right]^{+}dx,\label{eq:-1}
\end{align}
where $\left[z\right]^{+}=\max\{z,0\}$, and
\[
f_{\Delta}(x):=\sum_{i=-\infty}^{\infty}\frac{1}{\Delta}\int_{i\Delta}^{(i+1)\Delta}f(x)dx1\left\{ x\in(i\Delta,(i+1)\Delta]\right\} .
\]

Observe that $\left[f(x)-f_{\Delta}(x)\right]^{+}\leq f(x)$ and $f(x)$
is integrable, and moreover,
\begin{align}
 & \lim_{\Delta\to0}\sum_{i=-\infty}^{\infty}\frac{1}{\Delta}\int_{i\Delta}^{(i+1)\Delta}f(x)dx1\left\{ x_{0}\in(i\Delta,(i+1)\Delta]\right\} \nonumber \\
 & =\lim_{\Delta\to0}\frac{1}{\Delta}\int_{\left\lfloor \frac{x_{0}}{\Delta}\right\rfloor \Delta}^{\left(\left\lfloor \frac{x_{0}}{\Delta}\right\rfloor +1\right)\Delta}f(x)dx\nonumber \\
 & =f(x_{0}),\;\mathrm{a.e.},\label{eq:-9}
\end{align}
where \eqref{eq:-9} follows by Lebesgue's differentiation theorem
\cite[Thm. 7.7]{rudin2006real}. Hence by Lebesgue's dominated convergence
theorem \cite[Thm. 1.34]{rudin2006real}, \eqref{eq:-1} converges
to zero as $\Delta\to0$.

Therefore, $\lim_{\Delta\to0}L_{\Delta}$ exists and equals $L$.

\subsection*{Acknowledgments}

The author is deeply indebted to Prof. Vincent Y. F. Tan for making
helpful comments which greatly improve this paper. The author also
acknowledge Prof. Hideki Yagi for discussing on the extension to Rényi
divergence measures. The author is supported in part by a Singapore
National Research Foundation (NRF) National Cybersecurity R\&D Grant
(R-263-000-C74-281 and NRF2015NCR-NCR003-006), and in part by a National
Natural Science Foundation of China (NSFC) under Grant (61631017).
 \bibliographystyle{unsrt}
\bibliography{ref}

\end{document}